\theoremstyle{plain}
\newtheorem{theorem}{\bf Theorem}
\newtheorem{conjecture}[theorem]{\bf Conjecture}
\newtheorem{proposition}[theorem]{\bf Proposition}
\newtheorem{corollary}[theorem]{\bf Corollary}
\newtheorem{lemma}[theorem]{\bf Lemma}
\theoremstyle{definition}
\newenvironment{remark}[1][Remark.]{\begin{trivlist}
		\item[\hskip \labelsep {\bfseries #1}]}{\end{trivlist}}
\numberwithin{theorem}{section}
\numberwithin{equation}{section}
\newcommand{\Rea}{{\mathbb R}}
\newcommand{\FF}{{\mathbb F}}
\DeclareMathOperator{\lk}{lk}
\newcommand{\lfrac}[2]{\left\lfloor\frac{#1}{#2}\right\rfloor}
\newcommand{\ufrac}[2]{\left\lceil\frac{#1}{#2}\right\rceil}
\newcommand{\flagcomplex}[2]{\text{Fl}_{#1,#2}}
\newcommand{\lap}[3]{\Delta_{#1}^{+}(\flagcomplex{#2}{#3})}
\newcommand{\gl}[1]{\text{GL}(#1)}
\newcommand{\bigoh}[1]{O\left(#1\right)}
\begin{document}

\title{Asymptotic behavior of Laplacian eigenvalues of subspace inclusion graphs}
\author{Alan Lew\footnote{ Dept. Math. Sciences, Carnegie Mellon University, Pittsburgh, PA 15213, USA. e-mail: alanlew@andrew.cmu.edu}}

	\date{}
	\maketitle

\begin{abstract}
  Let $\text{Fl}_{n,q}$ be the simplicial complex whose vertices are the non-trivial subspaces of $\mathbb{F}_q^n$ and whose simplices correspond to families of subspaces forming a flag. Let $\Delta^{+}_k(\text{Fl}_{n,q})$ be the $k$-dimensional weighted upper Laplacian on $ \text{Fl}_{n,q}$. The spectrum of $\Delta^{+}_k(\text{Fl}_{n,q})$ was first studied by Garland, who obtained a lower bound on its non-zero eigenvalues.
  
  Here, we focus on the $k=0$ case.  We determine the asymptotic behavior of the eigenvalues of $\Delta_{0}^{+}(\text{Fl}_{n,q})$ as $q$ tends to infinity. In particular, we show that for large enough $q$, $\Delta_{0}^{+}(\text{Fl}_{n,q})$ has exactly $\left\lfloor n^2/4\right\rfloor+2$ distinct eigenvalues, and that every eigenvalue $\lambda\neq 0,n-1$ of $\Delta_{0}^{+}(\text{Fl}_{n,q})$ tends to $n-2$ as $q$ goes to infinity. This solves the $0$-dimensional case of a conjecture of Papikian.
\end{abstract}

\section{Introduction}

Let $n\geq 3$ be an integer and $q$ a prime power. Let $\FF_q^n$ be the $n$-dimensional vector space over the finite field $\FF_q$. We say that a linear subspace $V\subset \FF_q^n$ is \emph{trivial} if $V=\{0\}$ or $V=\FF_q^n$. A family of nested non-trivial linear subspaces $V_1\subsetneq V_2\subsetneq \cdots \subsetneq V_k$ is called a \emph{flag}. 
A flag of length $n-1$ is called a \emph{complete flag}.

Let $\flagcomplex{n}{q}$ be the simplicial complex whose vertices correspond to non-trivial linear subspaces of $\mathbb{F}_q^n$, and whose simplices are the families of subspaces forming a flag. Note that the complete flags are exactly the maximal faces of $\flagcomplex{n}{q}$. In particular, for any prime power $q$,  $\flagcomplex{n}{q}$ is a pure $(n-2)$-dimensional complex.
 
Let $C^k(\flagcomplex{n}{q})$ be the space of real $k$-cochains on $\flagcomplex{n}{q}$, and let $d_k: C^k(\flagcomplex{n}{q})\to C^{k+1}(\flagcomplex{n}{q})$ be the $k$-th \emph{coboundary operator}.
Let $\flagcomplex{n}{q}(k)$ be the set of $k$-dimensional simplices of $\flagcomplex{n}{q}$. For $\sigma=\{V_1,\ldots,V_{k+1}\}\in \flagcomplex{n}{q}(k)$, let $w(\sigma)$ be the number of complete flags extending $\sigma$.
That is, $w(\sigma)$ is the number of maximal faces of $\flagcomplex{n}{q}$ containing $\sigma$.
We define an inner product on the vector space $C^k(\flagcomplex{n}{q})$ by
\[
    \langle \phi,\psi \rangle =\sum_{\sigma\in \flagcomplex{n}{q}(k)} w(\sigma)\phi(\sigma)\psi(\sigma).
\]
Let $d_k^*$ be the operator adjoint to $d_k$ with respect to this inner product.
We define the \emph{weighted upper $k$-Laplacian} $\lap{k}{n}{q}: C^k(\flagcomplex{n}{q})\to C^k(\flagcomplex{n}{q})$ by
\[
   \lap{k}{n}{q}= d_k^* d_k. 
\]

Identifying $\lap{k}{n}{q}$ with its matrix representation with respect to the ``standard basis" for $C^k(\flagcomplex{n}{q})$ (see Section \ref{sec:prelims_lap}), we  obtain the following explicit formula:

Let $V_1\subsetneq V_2\subsetneq\cdots\subsetneq V_k$, $W_1\subsetneq W_2\subsetneq\cdots\subsetneq W_k$ be flags in $\FF_q^n$, and let $\sigma=\{V_1,\ldots,V_k\}\in \flagcomplex{n}{q}$ and $\tau=\{W_1,\ldots,W_k\}\in\flagcomplex{n}{q}$. Assume that $|\sigma\cap \tau|=k-1$ and $\sigma\cup\tau\in\flagcomplex{n}{q}$. Let $U$ be the unique element in $\sigma\setminus \tau$ and $V$ be the unique element in $\tau\setminus \sigma$. We define $\epsilon(\sigma,\tau)$ as the number of elements of $\sigma\cap \tau$ lying between $U$ and $V$.

Let $V_0=\{0\}$ and $V_{k+1}=\FF_q^n$. Let $\eta=\sigma\setminus \{V_i\}$ for some $1\leq i\leq k$. We define $r(\sigma,\eta)=\dim(V_{i+1})-\dim(V_{i-1})$ and $t(\sigma,\eta)=\dim(V_{i})-\dim(V_{i-1})$. 

For $a\geq b\geq 0$, let $\binom{a}{b}_q$ be the number of $b$-dimensional subspaces contained in $\FF_q^a$ (see Section \ref{sec:bin_prelims} for more details and an explicit formula).

\begin{lemma}\label{lemma:laplacian_formula}
    Let $n\geq 3$ and let $q$ be a prime power. Let $k\geq 0$. Then $\lap{k}{n}{q}$ is a $|\flagcomplex{n}{q}(k)|\times|\flagcomplex{n}{q}(k)|$ matrix with entries
    \[
    \lap{k}{n}{q}_{\sigma,\tau}=\begin{cases}
                    n-k-2 & \text{if } \sigma=\tau,\\
                    (-1)^{\epsilon(\sigma,\tau)+1}\binom{r(\sigma\cup\tau,\sigma)}{t(\sigma\cup\tau,\sigma)}_q^{-1} & \text{if } |\sigma\cap \tau|=k,\, \sigma\cup\tau\in\flagcomplex{n}{q},\\
                    0 & \text{otherwise,}
    \end{cases}
    \]
    for all $\sigma,\tau\in \flagcomplex{n}{q}(k)$.
\end{lemma}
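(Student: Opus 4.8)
The plan is to compute the matrix of $\lap{k}{n}{q} = d_k^* d_k$ directly from the definitions of the coboundary operator $d_k$ and the weighted inner product. The key input is an explicit description of $d_k^*$: since $d_k$ raises degree by one and is defined by the usual alternating-sum formula on oriented simplices, its adjoint with respect to a weighted inner product is a "weighted down-boundary" operator. Concretely, for a $k$-cochain $\phi$ and a $k$-simplex $\sigma$, one has
\[
(d_k^* \phi)(\sigma) = \frac{1}{w(\sigma)}\sum_{\rho \supset \sigma,\ \dim\rho = k+1} [\sigma:\rho]\, w(\rho)\, \phi(\rho),
\]
where $[\sigma:\rho]$ is the incidence sign. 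So the first step is to fix orientation conventions (the "standard basis" referenced in Section~\ref{sec:prelims_lap}) and verify this adjoint formula from $\langle d_k\psi,\phi\rangle = \langle \psi, d_k^*\phi\rangle$ — this is a short, routine check, essentially the discrete Stokes/Green identity with weights.

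**Next I would** expand $\lap{k}{n}{q}_{\sigma,\tau} = (d_k^* d_k e_\tau)(\sigma)$ where $e_\tau$ is a basis cochain. Unwinding gives a sum over $(k+1)$-simplices $\rho$ containing $\sigma$, weighted by $w(\rho)/w(\sigma)$ times incidence signs times the value $(d_k e_\tau)(\rho)$, which is nonzero only when $\tau \subset \rho$. This forces the casework. For the diagonal term $\sigma = \tau$: the contribution is $\sum_{\rho \supset \sigma} (w(\rho)/w(\sigma))$, and since each $\rho$ is obtained by inserting one subspace into the flag $\sigma$ (at one of $n-k-1$ "gaps" between consecutive subspaces, including the ends), and the weight ratios $w(\rho)/w(\sigma)$ summed over all insertions into a fixed gap telescope to $1$ by a counting identity for flags, this yields $n-k-2$. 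Wait — one must be slightly careful: there are $n-k-1$ gaps but the diagonal entry is $n-k-2$; the discrepancy is because one gap (or the normalization) behaves differently, and pinning this down via the flag-counting identity $\sum_{\rho} w(\rho) = (\text{something})\cdot w(\sigma)$ is where the Gaussian binomials enter. For the off-diagonal term with $|\sigma \cap \tau| = k$ and $\sigma \cup \tau \in \flagcomplex{n}{q}$: there is a unique $\rho = \sigma \cup \tau$ of dimension $k+1$ containing both, the incidence signs multiply to $(-1)^{\epsilon(\sigma,\tau)+1}$ (this is exactly what the definition of $\epsilon$ is engineered to record — the number of intervening elements controls the relative sign of $U$ and $V$ in the ordered flag $\rho$), and the weight ratio $w(\rho)/w(\sigma)$ equals $\binom{r}{t}_q^{-1}$ where $r, t$ are the dimension gaps defined before the lemma. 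The remaining case ($|\sigma\cap\tau| < k$, or $\sigma\cup\tau$ not a flag) gives $0$ since no common $\rho$ exists.

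**The main obstacle** will be the two weight computations — showing $w(\rho)/w(\sigma) = \binom{r(\sigma\cup\tau,\sigma)}{t(\sigma\cup\tau,\sigma)}_q^{-1}$ in the off-diagonal case and the telescoping identity giving $n-k-2$ on the diagonal. Both reduce to the following fact: if $\sigma$ is a flag with consecutive dimensions $d_{i-1} < d_i < d_{i+1}$ (using $V_0 = \{0\}$, $V_{k+1} = \FF_q^n$), then the number of complete flags refining $\sigma$ factors as a product, over each gap, of the number of complete flags in the quotient $V_{i+1}/V_{i-1}$ of dimension $r = d_{i+1}-d_{i-1}$ that pass through the $t$-dimensional subspace $V_i/V_{i-1}$. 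When we delete $V_i$ from $\sigma$ to get $\eta$ (or insert a new subspace), the ratio of refinement-counts localizes to that single gap, and the count of complete flags of $\FF_q^r$ through a fixed $t$-dimensional subspace, divided by the total count of complete flags of $\FF_q^r$, is exactly $1/\binom{r}{t}_q$ by a transitivity/orbit-counting argument for $\gl{\FF_q^r}$ acting on complete flags. I would isolate this as a preliminary combinatorial lemma on flag counts (citing the standard formula $\#\{\text{complete flags in }\FF_q^m\} = \prod_{j=1}^{m-1}\binom{m}{j}_q \big/ \text{(redundancy)}$, or more cleanly the $q$-factorial $[m]_q!$), and then the proof of Lemma~\ref{lemma:laplacian_formula} becomes bookkeeping: match signs via $\epsilon$, match weights via the combinatorial lemma, and read off the three cases.
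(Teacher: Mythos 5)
Your proposal takes essentially the same route as the paper: derive the weighted adjoint $d_k^*$, expand the matrix entry $\lap{k}{n}{q}_{\sigma,\tau}$, identify $w(\sigma\cup\tau)/w(\sigma)=\binom{r}{t}_q^{-1}$ via a flag-counting argument, and read off the sign from $\epsilon$. The paper packages the adjoint-and-expand step by citing a standard formula for the weighted Laplacian matrix and then proves the weight ratio (Lemma~\ref{lemma:weights1}) and the diagonal entry (Lemma~\ref{lemma:diagonal}) separately, but the content is the same.

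One small correction to the diagonal step, since you flagged it yourself: the clean parameterization is not by ``gaps'' but by the target dimension of the inserted subspace. The $k$-simplex $\sigma=\{V_1,\dots,V_{k+1}\}$ occupies $k+1$ of the $n-1$ available non-trivial dimensions, leaving exactly $n-k-2$ unused dimensions $d$; for each such $d$, summing $w(\sigma\cup\{U\})/w(\sigma)$ over all valid $U$ of dimension $d$ gives exactly $1$ by the weight-sum identity (the paper's Lemma~\ref{lemma:weightsum}). That yields $n-k-2$ directly with no ``discrepancy'' to explain away.
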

In the special case $k=0$, we obtain:
\begin{corollary}\label{cor:laplacian_formula_0_dim}
    Let $n\geq 3$ and let $q$ be a prime power. Then, $\lap{0}{n}{q}$ is a $|\flagcomplex{n}{q}(0)|\times|\flagcomplex{n}{q}(0)|$ matrix with entries
    \[
    \lap{0}{n}{q}_{U,V}=\begin{cases}
                    n-2 & \text{if } U=V,\\
                    - \binom{n-\dim(U)}{\dim(V)-\dim(U)}_q^{-1} & \text{if } U\subsetneq V,\\
                    - \binom{\dim(U)}{\dim(V)}_q^{-1} & \text{if } V\subsetneq U,\\
                    0 & \text{otherwise,}
    \end{cases}
    \]
    for every two non-trivial subspaces $U,V\subset \FF_q^n$.
\end{corollary}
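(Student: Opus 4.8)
The plan is to obtain the corollary as a direct specialization of Lemma~\ref{lemma:laplacian_formula} to $k=0$, so the only real work is to unwind the definitions of $\epsilon$, $r$, and $t$ in this case. When $k=0$, a $0$-simplex of $\flagcomplex{n}{q}$ is a singleton $\{U\}$ with $U$ a non-trivial subspace of $\FF_q^n$, so I write $\sigma=\{U\}$ and $\tau=\{V\}$. The diagonal case $\sigma=\tau$ of the lemma gives the value $n-k-2=n-2$ at once. For the off-diagonal entries, the condition $|\sigma\cap\tau|=k=0$ is equivalent to $U\neq V$, and the condition $\sigma\cup\tau\in\flagcomplex{n}{q}$ is equivalent to $\{U,V\}$ being a flag, i.e.\ to $U\subsetneq V$ or $V\subsetneq U$; in all remaining cases the lemma gives $0$, which is the ``otherwise'' line of the corollary.

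Next I would compute the sign. With $\sigma=\{U\}$ and $\tau=\{V\}$ disjoint, $U$ is the unique element of $\sigma\setminus\tau$, $V$ is the unique element of $\tau\setminus\sigma$, and $\sigma\cap\tau=\emptyset$ contains no element between $U$ and $V$; hence $\epsilon(\sigma,\tau)=0$ and $(-1)^{\epsilon(\sigma,\tau)+1}=-1$, matching the minus signs in the corollary. It then remains to identify the relevant binomial coefficient. Write the flag $\sigma\cup\tau$ as $V_1\subsetneq V_2$, put $V_0=\{0\}$ and $V_3=\FF_q^n$ as in the statement preceding the lemma, and note that $\sigma=(\sigma\cup\tau)\setminus\{V\}$. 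If $U\subsetneq V$ then $V_1=U$ and $V_2=V$, so $\sigma$ is obtained by deleting $V_2$; thus $r(\sigma\cup\tau,\sigma)=\dim V_3-\dim V_1=n-\dim U$ and $t(\sigma\cup\tau,\sigma)=\dim V_2-\dim V_1=\dim V-\dim U$, which gives the entry $-\binom{n-\dim U}{\dim V-\dim U}_q^{-1}$. If instead $V\subsetneq U$ then $V_1=V$ and $V_2=U$, so $\sigma$ is obtained by deleting $V_1$; thus $r(\sigma\cup\tau,\sigma)=\dim V_2-\dim V_0=\dim U$ and $t(\sigma\cup\tau,\sigma)=\dim V_1-\dim V_0=\dim V$, giving $-\binom{\dim U}{\dim V}_q^{-1}$. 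Collecting the cases yields exactly the matrix displayed in the corollary.

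There is essentially no obstacle here: the one point deserving care is the bookkeeping of the boundary conventions $V_0=\{0\}$, $V_{k+1}=\FF_q^n$ and of which index $i$ is being removed in each of the two containment cases, since this is precisely what produces the asymmetry between the $U\subsetneq V$ and $V\subsetneq U$ entries. Everything else is a direct substitution into Lemma~\ref{lemma:laplacian_formula}.
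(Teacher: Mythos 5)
Your proof is correct and follows essentially the same route as the paper: both specialize Lemma~\ref{lemma:laplacian_formula} to $k=0$, observe that $\epsilon(\{U\},\{V\})=0$, and unwind the definitions of $r$ and $t$ in the two containment cases to recover the displayed binomial coefficients.
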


Laplacian operators on simplicial complexes were introduced by Eckmann in \cite{eckmann1944harmonische}, where he proved a simplicial version of the Hodge Theorem, relating the kernel of the $k$-dimensional Laplacian of a simplicial complex to its $k$-th cohomology group.
Motivated by a conjecture of Serre  about the cohomology of certain groups associated with an algebraic group over a non-archimedean local field, Garland studied in \cite{garland1973p} the operator $\lap{k}{n}{q}$ and its eigenvalues. In particular, he proved the following result:
\begin{theorem}[Garland \cite{garland1973p}]\label{thm:garland}
Let $n\geq 3$ and $0\leq k\leq n-3$. Let $\epsilon>0$. Then, there is a constant $q_0(n,\epsilon)$ such that for every prime power $q\geq q_0(n,\epsilon)$, every non-zero eigenvalue $\lambda$ of $\lap{k}{n}{q}$ satisfies $
\lambda \geq n-k-2-\epsilon.
$
\end{theorem}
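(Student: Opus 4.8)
The plan is to prove Theorem~\ref{thm:garland} by Garland's local-to-global method: first bound the eigenvalues of $\lap{k}{n}{q}$ in terms of the smallest nonzero eigenvalues of the weighted graph Laplacians of links of $(k-1)$-faces, then observe that these links are joins of smaller flag complexes, and finally reduce everything to the case $k=0$. For Step~1, given $\phi\in C^{k}(\flagcomplex{n}{q})$ and $\sigma\in\flagcomplex{n}{q}(k-1)$, let $\phi_{\sigma}$ be the localization of $\phi$ to the link $\lk(\sigma)$, viewed as a $0$-cochain on $\lk(\sigma)$ with the weighting inherited from $\flagcomplex{n}{q}$ (which is again the complete-flag weighting, since $w(\tau)$ restricts correctly to links). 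Expanding the relevant inner products yields Garland's identity
\[
\langle \lap{k}{n}{q}\phi,\phi\rangle=\sum_{\sigma\in\flagcomplex{n}{q}(k-1)}\big\langle\Delta_{0}^{+}(\lk\sigma)\,\phi_{\sigma},\phi_{\sigma}\big\rangle-k\,\|\phi\|^{2}-\|d_{k-1}^{*}\phi\|^{2},
\]
together with $\sum_{\sigma}\|\phi_{\sigma}\|^{2}=(k+1)\|\phi\|^{2}$ and the fact that $\langle\phi_{\sigma},\mathbf 1\rangle_{\lk\sigma}$ is proportional to $(d_{k-1}^{*}\phi)(\sigma)$. If $\phi$ is an eigenvector of $\lap{k}{n}{q}$ with eigenvalue $\lambda\neq0$, then $\phi\in\operatorname{im}d_{k}^{*}\subseteq\ker d_{k-1}^{*}$, so every $\phi_{\sigma}$ is orthogonal to the constants on $\lk(\sigma)$, and the identity gives $\lambda\geq(k+1)\mu_{k-1}-k$, where $\mu_{k-1}$ is the minimum, over all $\sigma\in\flagcomplex{n}{q}(k-1)$, of the smallest nonzero eigenvalue of $\Delta_{0}^{+}(\lk\sigma)$.

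For Step~2, if $\sigma=\{V_{1}\subsetneq\cdots\subsetneq V_{k}\}$ and $m_{i}=\dim V_{i}-\dim V_{i-1}$ (with $V_{0}=\{0\}$ and $V_{k+1}=\FF_{q}^{n}$), then $\lk(\sigma)=\flagcomplex{m_{1}}{q}*\cdots*\flagcomplex{m_{k+1}}{q}$, since a flag refining $\sigma$ amounts to choosing, independently, a flag strictly inside each $V_{i}/V_{i-1}\cong\FF_{q}^{m_{i}}$ (factors with $m_{i}=1$ being empty). I would then prove: for any join $Y=\flagcomplex{m_{1}}{q}*\cdots*\flagcomplex{m_{r}}{q}$ with all $m_{i}\geq2$ and $N:=\sum m_{i}$, every nonzero eigenvalue of $\Delta_{0}^{+}(Y)$ is at least $N-r-1-\epsilon$ once $q$ is large. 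The key computation is that, writing $C^{0}(Y)$ as the orthogonal sum of the cochains constant on each $V(\flagcomplex{m_{i}}{q})$ and the cochains of weighted mean zero on each, the operator $\Delta_{0}^{+}(Y)$ is block-diagonal (one uses $\sum_{W'}w(\{W,W'\})=(m_{j}-1)w(W)$ whenever $W$ is a vertex of the $i$-th factor and $j\neq i$): the first block has eigenvalues $0$ and $N-r$, while the second acts as $\bigoplus_{i}\big(((N-r-1)-(m_{i}-2))\,I+\Delta_{0}^{+}(\flagcomplex{m_{i}}{q})\big)$ on the mean-zero cochains of each factor, so its nonzero eigenvalues are at least $(N-r-1)-\epsilon'$ provided the nonzero eigenvalues of each $\Delta_{0}^{+}(\flagcomplex{m_{i}}{q})$ are at least $m_{i}-2-\epsilon'$ --- i.e., provided the $k=0$ case of the theorem is known in ambient dimensions $<n$. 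For links inside $\flagcomplex{n}{q}$ one has $N-r-1=n-k-2$, hence $\mu_{k-1}\geq n-k-2-\epsilon'$, and Step~1 gives $\lambda\geq(k+1)(n-k-2-\epsilon')-k\geq(n-k-2)-(k+1)\epsilon'$, using $k(n-k-3)\geq0$ for $0\leq k\leq n-3$. Taking $\epsilon'=\epsilon/(k+1)$ settles all $k\geq1$.

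It remains (Step~3) to show that every nonzero eigenvalue of $\lap{0}{n}{q}$ is at least $n-2-\epsilon$ for $q$ large; this is also the input required in Step~2. Put $M:=(n-2)I-\lap{0}{n}{q}$. By Corollary~\ref{cor:laplacian_formula_0_dim}, $M$ is non-negative, symmetric for the weighted inner product, and irreducible, with constant row sums $n-2$; hence $n-2$ is its Perron eigenvalue, simple, with eigenvector $\mathbf 1$, and it suffices to show every other eigenvalue of $M$ converges (as $q\to\infty$) to a value $\leq0$. Here I would use the $\gl{n}$-action: $C^{0}(\flagcomplex{n}{q})=\bigoplus_{d=1}^{n-1}C_{d}$, where $C_{d}$ (functions on $d$-dimensional subspaces) decomposes as $\bigoplus_{i=0}^{\min(d,n-d)}U_{i}$, with each $\gl{n}$-irreducible $U_{i}$ occurring once and exactly for $i\leq d\leq n-i$. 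Since $M$ is $\gl{n}$-equivariant, it acts on the $U_{i}$-isotypic component as $\widetilde M^{(i)}\otimes\operatorname{Id}$, where $\widetilde M^{(i)}$ is a matrix of size at most $n$ whose entries are explicit rational functions of $q$ built from Gaussian binomial coefficients. The theorem in the case $k=0$ then reduces to checking that, as $q\to\infty$, each $\widetilde M^{(i)}$ converges and every limiting eigenvalue is $\leq0$ apart from the single value $n-2$ sitting in $\widetilde M^{(0)}$. (For $n=3$ this is transparent: $\flagcomplex{3}{q}$ is the incidence graph of $\mathrm{PG}(2,q)$ and $\lap{0}{3}{q}=I-(q+1)^{-1}A$ has eigenvalues $0$, $2$, and $1\pm\sqrt{q}/(q+1)$.)

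The main obstacle is Step~3: performing the $\gl{n}$-isotypic block-diagonalization of $M$ explicitly and controlling, uniformly in $n$, the $q\to\infty$ limits of the eigenvalues of the finitely many bounded-size blocks $\widetilde M^{(i)}$ --- this is where the arithmetic of Gaussian binomials genuinely enters. Steps~1 and~2 are bookkeeping once the behavior of the complete-flag weighting under passage to links and joins has been unwound.
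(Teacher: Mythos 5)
The paper does not prove Theorem~\ref{thm:garland}; it is cited from Garland's 1973 paper, with only a one-sentence mention that Garland developed the ``local-to-global'' method for it. So there is no in-paper proof to compare against, and your proposal has to be judged as a reconstruction of Garland's argument.

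Your Steps~1 and~2 are essentially correct. The localization identity (up to a sign on the $\|d_{k-1}^*\phi\|^2$ term, which is harmless because you immediately set it to zero), the computation $\sum_\sigma\|\phi_\sigma\|^2=(k+1)\|\phi\|^2$, the identification $\langle\phi_\sigma,\mathbf 1\rangle\propto(d_{k-1}^*\phi)(\sigma)$, the description of $\lk(\sigma)$ as a join of flag complexes of the quotients $V_i/V_{i-1}$, the block decomposition of $\Delta_0^+$ on a join, and the arithmetic $\lambda\geq(k+1)(n-k-2-\epsilon')-k\geq(n-k-2)-(k+1)\epsilon'$ using $k(n-k-3)\geq 0$ for $k\leq n-3$, all check out. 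The key weight fact you invoke (that $\sum_{W'}w(\{W,W'\})=(m_j-1)w(W)$ when $W'$ ranges over the $j$-th join factor) follows from the paper's Lemmas~\ref{lemma:weight_function_depends_on_dim}--\ref{lemma:weightsum}.

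The genuine gap is Step~3, as you yourself flag. You reduce everything to the $k=0$ case of the theorem (the spectral gap of $\Delta_0^+(\flagcomplex{m}{q})$ for $m<n$ in Step~2, and for $m=n$ directly), and then propose to prove that base case by block-diagonalizing $\lap{0}{n}{q}$ along the $\gl{n}$-isotypic decomposition of $C^0$. That is a valid approach, but it is not a small step: carrying it out is precisely the technical core of this paper (Sections~\ref{sec:lap_inclusion_matrices}--\ref{sec:spectra} develop the subspace-inclusion-matrix machinery, Theorem~\ref{thm:diagonal_representation} gives the block structure, and Corollary~\ref{cor:pap_part1} is exactly the $k=0$ statement you need). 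In other words, your Step~3 is equivalent to re-deriving the paper's main technical theorem, and you have not actually done so; you have only gestured at it. As written, the proposal is therefore incomplete.

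Worth noting for contrast: Garland's actual argument does not take this detour through the $0$-Laplacian of full flag complexes. The standard formulation of the local-to-global method (and Garland's original one) iterates the localization down to $1$-dimensional links, i.e.\ the rank-$2$ residues of the building. For $\flagcomplex{n}{q}$ these are generalized digons (complete bipartite incidence graphs) and generalized triangles (incidence graphs of $\mathrm{PG}(2,q)$), whose weighted $\Delta_0^+$ spectra are elementary to compute by hand --- as you yourself do in your $n=3$ sanity check, where $\lap{0}{3}{q}=I-(q+1)^{-1}A$ has eigenvalues $0,2,1\pm\sqrt q/(q+1)$. That closes the recursion without any representation theory and is what makes Garland's proof self-contained. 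If you replace your Step~3 by this iterated-link computation (or, equivalently, by a trickle-down statement from $1$-dimensional links back to $\Delta_0^+$ of the whole complex), the proof plan becomes complete and matches Garland's.
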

For the proof of Theorem \ref{thm:garland}, Garland developed his well known ``local to global" method, which relates the Laplacian eigenvalues of a simplicial complex to the eigenvalues of its links (see e.g. \cite{ballmann1997l2}).

In \cite{papikian2008eigenvalues,papikian2016garland} Papikian continued the study of the spectrum of $\lap{k}{n}{q}$. Based on Garland's results and on some computer assisted calculations, he proposed the following conjecture:

\begin{conjecture}[Papikian \cite{papikian2016garland}]\label{conj:papikian}
Let $n\geq 3$ and let $q\geq 2$ be a prime power. Then, for all $0\leq k \leq n-3$,
\begin{enumerate}[leftmargin=*]
    \item The number of distinct eigenvalues of $\lap{k}{n}{q}$ does not depend on $q$.
    \item For every $\epsilon>0$ there exists an integer $q_0(n,\epsilon)$ such that, for $q\geq q_0(n,\epsilon)$, for every non-zero eigenvalue $\lambda$ of $\lap{k}{n}{q}$ there is some $m\in \{n-k-2,n-k-1,\ldots,n-1\}$ such that
    \[
        |\lambda-m|< \epsilon.
    \]
\end{enumerate}
\end{conjecture}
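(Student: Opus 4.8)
The strategy is to block-diagonalize $\lap{k}{n}{q}$ using the action of $G=\text{GL}_n(\FF_q)$. The weights $w(\sigma)$ and the simplicial structure of $\flagcomplex{n}{q}$ are $G$-invariant, so by Lemma~\ref{lemma:laplacian_formula} the operator $\lap{k}{n}{q}$ is $G$-equivariant and self-adjoint on $C^k(\flagcomplex{n}{q})$. Decomposing $C^k(\flagcomplex{n}{q})\cong\bigoplus_{\lambda}V_\lambda\otimes\Rea^{m_{\lambda,k}}$ as a $G$-module (the unipotent constituents $V_\lambda$ being absolutely irreducible over $\Rea$), $\lap{k}{n}{q}$ becomes literally block-diagonal, with one block $M_\lambda(q)\in\mathrm{Mat}_{m_{\lambda,k}}(\Rea)$ per constituent, and the set of distinct eigenvalues of $\lap{k}{n}{q}$ equals the union of those of the $M_\lambda(q)$. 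Now $C^k(\flagcomplex{n}{q})$ is the permutation module of $G$ on $(k+1)$-flags, i.e.\ $\bigoplus_{\mathbf a}\Rea[G/P_{\mathbf a}]$ over compositions $\mathbf a=(a_1,\dots,a_{k+2})$ of $n$ into $k+2$ positive parts. Since the Iwahori--Hecke algebra $\mathcal H_n(q)$ is semisimple for every prime power $q\ge2$ (each quantum integer $1+q+\dots+q^{m-1}$ is positive), each $\Rea[G/P_{\mathbf a}]$ decomposes into unipotent irreducibles $V_\lambda$, with $\lambda\vdash n$ having at most $k+2$ parts, with multiplicities equal to the appropriate Kostka numbers — and therefore \emph{independent of $q$}. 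Hence, for all $q\ge2$, each $M_\lambda(q)$ has $q$-independent size and entries that are explicit rational functions of $q$ built from the inverse $q$-binomials of Lemma~\ref{lemma:laplacian_formula}.

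This reduces part (1) of Conjecture~\ref{conj:papikian} to controlling eigenvalue collisions among the finitely many $M_\lambda(q)$. Each $\det(xI-M_\lambda(q))$ lies in $\QQ(q)[x]$ and has $q$-independent degree, so any two eigenvalue branches (within one $M_\lambda$ or across two blocks) either coincide for all $q$ or only for finitely many $q$, namely zeros of a fixed resultant; part (1) follows once one checks none of these is a prime power $\ge2$. For $k=0$ this is transparent. The constituents are $V_{(n-j,j)}$, $0\le j\le\lfloor n/2\rfloor$, and each Grassmannian module $\Rea[G/P_{(d,n-d)}]$ is multiplicity-free for every $q$ (it is a Gelfand pair), so the entries of every $M_\lambda(q)$ are honest rational functions valid uniformly in $q$. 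By Corollary~\ref{cor:laplacian_formula_0_dim}, the trivial block ($j=0$) in the basis of constant cochains $\mathbf 1_d$ ($1\le d\le n-1$) equals $(n-1)I-\mathbf 1\mathbf 1^{\mathsf T}$, with spectrum $\{0,n-1\}$ for all $q$; while the blocks $j\ge1$ contribute $\sum_{j=1}^{\lfloor n/2\rfloor}(n-2j+1)=\lfloor n^2/4\rfloor$ further eigenvalues, given in closed form by the eigenvalues of the Grassmann association scheme ($q$-Hahn polynomials), which one verifies to be pairwise distinct and distinct from $0,n-1$ for every $q\ge2$. This gives exactly $\lfloor n^2/4\rfloor+2$ distinct eigenvalues for all $q$.

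For part (2), the reduction above turns the claim into a statement about $\lim_{q\to\infty}M_\lambda(q)$. The basic estimate is $\binom{a}{b}_q=\prod_{i=0}^{b-1}\frac{q^{a-i}-1}{q^{b-i}-1}\sim q^{\,b(a-b)}$, so a single inverse $q$-binomial vanishes in the limit unless $b\in\{0,a\}$; however a row of $\lap{k}{n}{q}$ has $\Theta(q^{c})$ nonzero entries, so the limit is not simply $(n-k-2)I$, and one must track how these many small contributions aggregate after the equivariant change of basis — which is precisely why $M_\lambda(q)$, not $\lap{k}{n}{q}$ itself, is the right object. To place the limiting eigenvalues in $\{n-k-2,\dots,n-1\}$ I would combine this with Garland's local-to-global method (Theorem~\ref{thm:garland}) and induction on $n$: the link in $\flagcomplex{n}{q}$ of a $j$-simplex is a join of smaller type-$A$ buildings, whose Laplacian eigenvalues converge to integers by induction; feeding these into the local-to-global estimates, and using that $\flagcomplex{n}{q}$ has no reduced cohomology below its top dimension (which identifies $\ker\lap{k}{n}{q}$), forces every limit point of the non-zero spectrum into the asserted window. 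For $k=0$ this is immediate from the $q$-Hahn eigenvalues above, all of which tend to $n-2$, together with the exact eigenvalue $n-1$ coming from the trivial block.

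The step I expect to be the main obstacle is part (1) for $k\ge1$. Although the block structure is $q$-uniform, the matrices $M_\lambda(q)$ with $k\ge1$ are no longer governed by a one-variable orthogonal polynomial family, so one must describe their characteristic polynomials explicitly enough — presumably through the combinatorics of the parabolic Hecke algebras and of incidences between partial flag varieties — both to evaluate the exact number of distinct eigenvalues and to exclude accidental collisions at small prime powers $q\ge2$; the $k=0$ case works out only because the Grassmannian modules are multiplicity-free and the relevant eigenvalues are classical. A secondary difficulty is making the aggregation estimate in part (2) quantitative enough to pin each limit down to the correct integer rather than merely to the interval $[n-k-2,n-1]$.
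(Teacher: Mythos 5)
This statement is a \emph{conjecture}; the paper does not prove it, and neither do you. The paper's actual contribution (Theorem \ref{thm:main_result} and Corollary \ref{cor:pap0case}) resolves only the $k=0$ case, and even there part (1) is established only for $q\geq q_0(n)$, not for all prime powers $q\geq 2$ — the concluding remarks explicitly list extending part (1) to all $q$ as open. Your overall strategy (equivariant block-diagonalization into $q$-independently-sized blocks via unipotent constituents, then asymptotic analysis of each block) is exactly the philosophy of the paper's $k=0$ argument and of the ongoing work mentioned in its final section, so the framework is sound; but the proposal has genuine gaps at precisely the points where the conjecture is hard.

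Concretely: for part (1), your resultant argument only reduces the claim to ``check that no eigenvalue collision occurs at any of finitely many prime powers,'' with no mechanism for performing that check; that check \emph{is} the open problem. Your claim that for $k=0$ the block eigenvalues are ``given in closed form by $q$-Hahn polynomials'' and are ``verified to be pairwise distinct for every $q\geq 2$'' is asserted, not proved, and does not match the structure of the blocks: the matrices $\tilde{L}_k$ of Theorem \ref{thm:diagonal_representation} are non-symmetric $(n-2k+1)\times(n-2k+1)$ matrices coupling Grassmannians of \emph{different} dimensions $i$, not adjacency operators of a single Grassmann scheme, and the paper needs the full machinery of Sections \ref{sec:spectra} (sign-alternating Fibonacci polynomials, Rouch\'e's theorem, the change of variables $t=s\cdot q^{-k/2}$) merely to separate their eigenvalues for \emph{large} $q$. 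For part (2), Garland's method yields only the one-sided bound $\lambda\geq n-k-2-\epsilon$; it cannot force eigenvalues to cluster near the individual integers $n-k-2,\dots,n-1$, and you acknowledge you cannot close this step. So the proposal is a reasonable research program consistent with the paper's approach, but it is not a proof, and the statement remains a conjecture for $k\geq 1$ (and for small $q$ even when $k=0$).
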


Here, we focus on the $0$-dimensional case. Our main result is the following precise description of the asymptotic behavior (as $q\to\infty$) of the spectrum of $\lap{0}{n}{q}$. 

\begin{theorem}\label{thm:main_result}
Let $n\geq 3$ and let $q$ be a prime power. Then, $0$ is an eigenvalue of $\lap{0}{n}{q}$ with multiplicity $1$, and $n-1$ is an eigenvalue with multiplicity $n-2$. If $n$ is even, $n-2$ is an eigenvalue with multiplicity $\binom{n}{n/2}_q-\binom{n}{n/2-1}_q$. In addition, there exist $q_0(n),C(n)>0$ such that the following holds:

Let $1\leq k\leq \lfrac{n-1}{2}$. Let $\alpha(k)=\min\{k/2,1\}$, and
\[
    \mathcal{J}_{k}=\left\{ \pm 2\cos\left(\frac{j\pi}{n-2k+2}\right) :\, 1\leq j\leq \lfrac{n-2k+1}{2}\right\}.
\]
For every $\zeta\in \mathcal{J}_k$, define
\[
    D_{n,k,\zeta}(q)=\left\{\lambda\in \mathbb{R}:\, \left|\lambda-\left( n-2 + \zeta\cdot q^{-\frac{k}{2}}\right)\right|\leq C(n)\cdot q^{-(\frac{k}{2}+\alpha(k))}\right\}.
\]
If $n$ is odd, let $\mathcal{D}_{k}=\{D_{n,k,\zeta}(q) :\, \zeta\in\mathcal{J}_k\}$. If $n$ is even, let
\[
    \tilde{D}_{n,k}(q)=
    \left\{\lambda\in \mathbb{R}:\, \left|\lambda-\left(n-2+\frac{2(n-2k)}{n-2k+2}\cdot q^{-k}\right)\right|\leq C(n)\cdot q^{-(k+1)}\right\},
\]
and let $\mathcal{D}_k=\{\tilde{D}_{n,k}(q)\}\cup \{D_{n,k,\zeta}(q) :\, \zeta\in\mathcal{J}_k\}$. 

Then, for every prime power $q\geq q_0(n)$, the intervals in $\bigcup_{k=1}^{\lfloor (n-1)/2 \rfloor}\mathcal{D}_k$ are pairwise disjoint, and for all $1\leq k\leq \lfrac{n-1}{2}$, every interval in $\mathcal{D}_k$ contains exactly one eigenvalue of $\lap{0}{n}{q}$, with multiplicity $\binom{n}{k}_q-\binom{n}{k-1}_q$.
\end{theorem}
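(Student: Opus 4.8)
The plan is to reduce the spectral analysis of $\lap{0}{n}{q}$ to that of a much smaller, explicitly described family of matrices, and then to perturbatively analyze those matrices as $q\to\infty$. Note first that $\lap{0}{n}{q}$ acts on functions on the non-trivial subspaces of $\FF_q^n$, and by Corollary \ref{cor:laplacian_formula_0_dim} its entries depend only on the dimensions of the two subspaces and their incidence. This means $\lap{0}{n}{q}$ commutes with the natural action of $\gl{\FF_q^n}$ on $C^0(\flagcomplex{n}{q})$, so it preserves each isotypic component of this permutation representation. The decomposition of the permutation module on $d$-dimensional subspaces of $\FF_q^n$ is classical: it is multiplicity-free, with irreducible constituents indexed by partitions $\lambda$ with at most two parts, appearing for all $d$ with $\lambda_2\le d\le n-\lambda_2$ (equivalently, by the integers $k=\lambda_2$ with $0\le k\le n/2$). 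Grouping the dimension-$d$ summands, one sees that on the $k$-th isotypic block $\lap{0}{n}{q}$ is conjugate (after an appropriate normalization absorbing the $q$-binomial weights, which I would record as an explicit diagonal congruence) to an $(n-2k+1)\times(n-2k+1)$ matrix $M_{n,k}(q)$ indexed by $d\in\{k,k+1,\ldots,n-k\}$, whose off-diagonal entries involve ratios of Gaussian binomials and whose diagonal is $n-2$.

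The key computation is then to determine the entries of $M_{n,k}(q)$ up to lower-order terms in $q^{-1}$. Using the asymptotics $\binom{a}{b}_q = q^{b(a-b)}(1+O(q^{-1}))$, the dominant off-diagonal contributions come from incidences between subspaces of consecutive dimensions, and each such entry is $-q^{-(\text{something})}$ times $1+O(q^{-1})$; incidences between dimensions differing by more than one contribute strictly smaller powers. A careful bookkeeping should show that $q^{k/2}(M_{n,k}(q)-(n-2)I)$ converges, as $q\to\infty$, to (a scalar multiple of) the adjacency matrix of a path on $n-2k+1$ vertices — more precisely, to the tridiagonal matrix with zero diagonal and off-diagonal entries governing the coupling between consecutive dimensions. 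The eigenvalues of the path $P_m$ are exactly $\{2\cos(j\pi/(m+1)) : 1\le j\le m\}$; here $m=n-2k$, $m+1=n-2k+1$, which matches the set $\mathcal J_k$ up to the symmetry $\pm$ coming from the bipartite-like structure of the leading term, and the apparent off-by-one between $n-2k+1$ and $n-2k+2$ will be resolved by the fact that for $k\ge 1$ the endpoints of the ``dimension path'' couple slightly differently (the first correction at $d=k$ and $d=n-k$ is of different order, $q^{-k}$ versus $q^{-k/2}$), which is precisely why $\alpha(k)=\min\{k/2,1\}$ enters and why for even $n$ a separate eigenvalue $\tilde D_{n,k}(q)$ — coming from the middle dimension $d=n/2$, which is its own partner — splits off. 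For $k=0$ one recovers the known eigenvalues $0$ and $n-1$ exactly (the $k=0$ block matrix can be diagonalized on the nose), and $n-2$ with the stated multiplicity when $n$ is even.

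With the limiting matrices identified, the remaining work is quantitative perturbation theory: since the limiting tridiagonal matrix has simple spectrum (the $\cos$ values are distinct), standard eigenvalue perturbation bounds (Weyl / Bauer--Fike, or a direct Gershgorin-type argument after diagonalizing the leading term) give that each eigenvalue of $M_{n,k}(q)$ lies within $O(q^{-(k/2+\alpha(k))})$ of $n-2+\zeta q^{-k/2}$, which is the content of the sets $D_{n,k,\zeta}(q)$. Multiplicities transfer from the isotypic decomposition: the $k$-th block contributes with multiplicity $\binom{n}{k}_q-\binom{n}{k-1}_q$, the dimension of the corresponding irreducible. Finally, pairwise disjointness of all these intervals for $q\ge q_0(n)$ follows because the centers $n-2+\zeta q^{-k/2}$ for different $k$ live at different scales in $q^{-1/2}$, and within a fixed $k$ the distinct values of $\zeta$ are separated by an absolute constant while the radii shrink faster; one just needs $q_0(n)$ large enough to beat the finitely many implied constants. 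The main obstacle I anticipate is the second step: carrying out the $q$-binomial asymptotics carefully enough to (a) justify the claimed error exponent $k/2+\alpha(k)$ uniformly, and especially (b) correctly handle the boundary behavior at $d=k$ and $d=n-k$ and the parity split at $d=n/2$, since these are exactly where the naive ``path graph'' heuristic needs correction and where the somewhat unusual indices $n-2k+2$ and the term $\tfrac{2(n-2k)}{n-2k+2}q^{-k}$ come from.
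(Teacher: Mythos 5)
Your structural first step is essentially the paper's Theorem~\ref{thm:diagonal_representation}, which it carries out concretely via Kantor's subspace inclusion matrices $A_{ij}$ rather than by invoking the $\gl{n,q}$-module decomposition abstractly; the paper in fact remarks that the subspaces $\tilde E^k$ are the irreducible constituents, so you have the right picture. The $(n-2k+1)\times(n-2k+1)$ block you call $M_{n,k}(q)$ is the paper's $\tilde L_k$. Where you diverge from the paper, and where the real difficulties lie, is the spectral analysis of these blocks.

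Two remarks on that second half. First, the raw block $\tilde L_k$ is \emph{not} close to symmetric tridiagonal: its upper-triangular entries are $-1+O(q^{-1})$ and its lower-triangular $(i,j)$ entries are $-q^{-k(i-j)}(1+O(q^{-1}))$, so without rescaling it converges to a strictly upper-triangular matrix with all eigenvalues equal to $n-2$. To make your ``path graph'' picture literal you would need a \emph{second} diagonal conjugation, $d_i\propto q^{-ki/2}$, to balance the two triangles before rescaling by $q^{k/2}$; you gesture at a diagonal congruence but only in the context of forming $M_{n,k}$, so this step is not actually present. The paper sidesteps symmetrization entirely by working with the characteristic polynomial directly: it shows (Lemma~\ref{lemma:matrix_char_poly}, via a combinatorial analysis of permutations and their displacement $c(\pi)$) that the coefficients of $\det(tI-\tilde L_k)$ have controlled leading terms, then substitutes $t=sq^{-k/2}$ to obtain the sign-alternating Fibonacci polynomial $F_{n-2k+1}(s)$ up to $O(q^{-\alpha(k)})$ error (Lemma~\ref{lemma:change_of_variables_1}), whose roots are exactly $\mathcal{J}_k$ (Corollary~\ref{cor:signalternatingfibo}), and finishes with Rouch\'e. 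Incidentally, there is no ``off-by-one'' to resolve: the path on $m=n-2k+1$ vertices has eigenvalues $2\cos(j\pi/(m+1))=2\cos(j\pi/(n-2k+2))$, which is precisely what appears in $\mathcal{J}_k$.

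The genuine gap is the eigenvalue captured by $\tilde D_{n,k}(q)$ when $n$ is even. The path on an odd number $n-2k+1$ of vertices has $0$ as a simple eigenvalue, and your rescaled-perturbation argument locates the corresponding eigenvalue of $\tilde L_k$ only to within $O(q^{-(k/2+\alpha(k))})$ of $n-2$; that is \emph{not} enough, because every block $\tilde L_{k'}$ with $n-2k'+1$ odd produces such an eigenvalue centered near $n-2$ at the same resolution, so the disjointness claim would fail. One must extract the next-order term $\frac{2(n-2k)}{n-2k+2}q^{-k}$ and an $O(q^{-(k+1)})$ error, which the paper gets via a separate change of variables $t=sq^{-k}$ (Lemma~\ref{lemma:change_of_variables_2}) that isolates the degree-$0$ and degree-$1$ coefficients of the characteristic polynomial. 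Your heuristic for this term is also off: it is not tied to ``the middle dimension $d=n/2$'' being ``its own partner'' --- the $0$-eigenvector of an odd path is supported on alternating vertices, not concentrated in the middle, and the coefficient $\frac{2(n-2k)}{n-2k+2}$ comes out of a second-order characteristic-polynomial computation, not a local boundary effect. Finally, to show all the interval centers are distinct (needed before invoking a disjointness lemma), the paper also has to rule out $2\cos(j\pi/(n-2k+2))$ being rational, using Niven's theorem; this is another small but necessary step your sketch omits.
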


A simple counting argument (see the proof of Theorem \ref{thm:main_result} in Section \ref{sec:spectra}) shows that the total number of eigenvalues considered by Theorem \ref{thm:main_result} is exactly $\sum_{k=1}^{n-1}\binom{n}{k}_q=|\flagcomplex{n}{q}(0)|$. Therefore, Theorem \ref{thm:main_result} indeed determines the asymptotic behavior of the complete spectrum of $\lap{0}{n}{q}$.

As a simple consequence of Theorem \ref{thm:main_result}, we obtain a proof of the $0$-dimensional case of Papikian's conjecture for large values of $q$:

\begin{corollary}\label{cor:pap0case}
Let $n\geq 3$. Then,
\begin{enumerate}
    \item There is an integer $q_0(n)$ such that, for every prime power $q\geq q_0(n)$, the number of distinct eigenvalues of $\lap{0}{n}{q}$ is exactly $\lfrac{n^2}{4}+2$.
    \item For every $\epsilon>0$ there is an integer $q_0(n,\epsilon)$ such that, for every prime power $q\geq q_0(n,\epsilon)$, every eigenvalue $\lambda\neq 0,n-1$ of $\lap{0}{n}{q}$ satisfies
    $
        |\lambda-(n-2)|<\epsilon.
    $
\end{enumerate}

\end{corollary}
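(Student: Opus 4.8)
The plan is to read both statements off Theorem~\ref{thm:main_result} directly; the whole argument is bookkeeping with floor functions, and no new idea is needed. Fix $n\ge3$ and take $q\ge q_0(n)$ as supplied by that theorem, so that the spectrum of $\lap{0}{n}{q}$ is the union of: $\{0\}$ with multiplicity $1$; $\{n-1\}$ with multiplicity $n-2$; when $n$ is even, $\{n-2\}$ with multiplicity $\binom{n}{n/2}_q-\binom{n}{n/2-1}_q$; and, for each $1\le k\le\lfloor(n-1)/2\rfloor$ and each interval $I\in\mathcal{D}_k$, exactly one eigenvalue inside $I$ with multiplicity $\binom{n}{k}_q-\binom{n}{k-1}_q$. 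First I would record that all these multiplicities are strictly positive — for the Gaussian binomial differences this is strict unimodality of $k\mapsto\binom{n}{k}_q$, which applies since $k\le\lfloor(n-1)/2\rfloor<n/2$ — and that, by the counting remark immediately following Theorem~\ref{thm:main_result}, the multiplicities sum to $|\flagcomplex{n}{q}(0)|$, the order of the matrix. Hence the list is the \emph{complete} spectrum, and everything reduces to: (a) checking that $0$, $n-1$, and (for $n$ even) $n-2$ are pairwise distinct and lie in none of the intervals in $\bigcup_k\mathcal{D}_k$; and (b) counting those intervals.

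For part~(2) and for step~(a) I would use one crude estimate. Each interval $D_{n,k,\zeta}(q)$ with $k\ge1$ is contained in the ball about $n-2$ of radius $|\zeta|\,q^{-k/2}+C(n)\,q^{-(k/2+\alpha(k))}\le(2+C(n))\,q^{-1/2}$ (using $|\zeta|\le2$, $k\ge1$, $\alpha(k)\ge\tfrac12$), and each $\tilde{D}_{n,k}(q)$ sits in an even smaller ball of radius $O(q^{-1})$; the value $n-2$ itself is trivially within any $\epsilon$ of $n-2$. So given $\epsilon>0$, taking $q_0(n,\epsilon)=\max\{q_0(n),\lceil((2+C(n))/\epsilon)^2\rceil\}$ yields part~(2). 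Applying the same bound with $\epsilon<\tfrac12$ shows that for $q$ large in terms of $n$ every interval lies inside $(n-\tfrac52,n-\tfrac32)$, hence contains neither $0$ nor $n-1$ (here $n\ge3$ is used); and since the center of each interval is at distance of exact order $q^{-k/2}$ (resp.\ $q^{-k}$) from $n-2$ while its radius has strictly smaller order, $n-2$ lies outside every interval once $q$ is large. Together with the trivial inequalities $0\ne n-1$ and $n-2\ne 0,n-1$ (the latter only in play when $n$ is even), this settles step~(a).

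It remains to count the intervals. For $1\le j\le\lfloor(n-2k+1)/2\rfloor$ one has $2j\le n-2k+1<n-2k+2$, so the angle $j\pi/(n-2k+2)$ lies in $(0,\pi/2)$, making the cosines positive and distinct; hence the $\pm2\cos(j\pi/(n-2k+2))$ are $2\lfloor(n-2k+1)/2\rfloor$ distinct reals, so $|\mathcal{J}_k|=2\lfloor(n-2k+1)/2\rfloor$, and $|\mathcal{D}_k|$ equals this when $n$ is odd and this plus $1$ when $n$ is even. Summing $|\mathcal{D}_k|$ over $1\le k\le\lfloor(n-1)/2\rfloor$ and adding the two or three special eigenvalues, I would split on parity: for $n=2m+1$ one has $\lfloor(n-2k+1)/2\rfloor=m+1-k$ and the total is $2+\sum_{k=1}^{m}2(m+1-k)=2+m(m+1)$; for $n=2m$ one has $\lfloor(n-2k+1)/2\rfloor=m-k$ and the total is $3+\sum_{k=1}^{m-1}\bigl(2(m-k)+1\bigr)=3+(m^2-1)$. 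Both evaluate to $\lfloor n^2/4\rfloor+2$, which is part~(1). The only step requiring genuine care is this last one: the interplay of the floor functions means the honest split into the two parity cases is essential, and it is the sole place where the argument can go wrong.
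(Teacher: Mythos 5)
Your proof is correct and takes essentially the same approach as the paper: read the spectrum off Theorem~\ref{thm:main_result} and evaluate the resulting floor-function sums by splitting on the parity of $n$ (your arithmetic on both sides checks out). The only material differences — deriving part~(2) from the interval radii in Theorem~\ref{thm:main_result} rather than invoking the more elementary Corollary~\ref{cor:pap_part1}, and explicitly verifying that $0$, $n-1$, and (for even $n$) $n-2$ are pairwise distinct and lie outside every interval in $\bigcup_k\mathcal{D}_k$, a point the paper's proof of this corollary leaves implicit — are harmless, and the second is a welcome bit of added rigor.
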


The proof of Theorem \ref{thm:main_result} consists of two main steps. First, we 
find a basis of $C^0(\flagcomplex{n}{q})$ in which we have a simple explicit description of the matrix representation of the Laplacian $\lap{0}{n}{q}$:
\begin{theorem}\label{thm:diagonal_representation}
There is a basis $B$ of $C^0(\flagcomplex{n}{q})$ such that the matrix representation of $\lap{0}{n}{q}$ with respect to the basis $B$ is a block diagonal matrix
\[
    \begin{pmatrix}
    L_0 &   & \\
     & \ddots & \\
     & & L_{\lfrac{n}{2}}
\end{pmatrix}
\]
with blocks
\[
    L_k=  I_{\binom{n}{k}_q-\binom{n}{k-1}_q}\otimes \tilde{L}_k,
\]
where $I_{\binom{n}{k}_q-\binom{n}{k-1}_q}$ is the $\left(\binom{n}{k}_q-\binom{n}{k-1}_q\right)\times\left(\binom{n}{k}_q-\binom{n}{k-1}_q\right)$ identity matrix, $\tilde{L}_0$ is the $(n-1)\times (n-1)$ matrix with entries
\begin{equation}\label{eq:lap_block_formula_k0}
    (\tilde{L}_0)_{ij}=\begin{cases}
    n-2 & \text{ if } i=j,\\
    -1 &\text{ if } i\neq j
    \end{cases}
\end{equation}
for $1\leq i,j\leq n-1$, and, for $1\leq k\leq \lfrac{n}{2}$, $\tilde{L}_k$ is the $(n-2k+1)\times (n-2k+1)$ matrix with entries
\begin{equation}\label{eq:lap_block_formula}
    (\tilde{L}_k)_{ij}=\begin{cases}
    n-2 & \text{ if } i=j,\\
    -c_{i j k} \binom{n-i}{j-i}_q^{-1} &\text{ if } i< j,\\
    - \binom{i-k}{j-k}_q \binom{i}{j}_q^{-1} & \text{ if } i> j
    \end{cases}
\end{equation}
for $k\leq i,j\leq n-k$, where 
\[
c_{i j k}= \sum_{r=0}^k (-1)^{k-r} q^{\binom{r+1}{2}+\binom{k}{2}-rk}
    \binom{k}{r}_q \binom{n-i-k+r}{j-i-k+r}_q.
\]
\end{theorem}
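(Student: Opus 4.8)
The plan is to diagonalize $\lap{0}{n}{q}$ by decomposing $C^0(\flagcomplex{n}{q})$ according to the action of $\mathrm{GL}_n(\FF_q)$. The complex $\flagcomplex{n}{q}$ is $\mathrm{GL}_n(\FF_q)$-equivariant, the weights $w(\sigma)$ are invariant, and hence $\lap{0}{n}{q}$ commutes with the permutation action on $C^0$. Now $C^0(\flagcomplex{n}{q})$ splits as the direct sum over $1\le d\le n-1$ of the permutation module $M_d$ spanned by the $d$-dimensional subspaces, and $M_d$ decomposes (by the classical representation theory of the parabolic induction / the Bruhat–Tits building, or simply because these are the well-understood permutation modules on Grassmannians over $\FF_q$) into irreducibles $S_{(n-k,k)}$ for $0\le k\le \min(d,n-d)$, each occurring with multiplicity one; the dimension of $S_{(n-k,k)}$ is $\binom{n}{k}_q-\binom{n}{k-1}_q$. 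Grouping by $k$, the $k$-isotypic component of $C^0$ is a sum of $\binom{n}{k}_q-\binom{n}{k-1}_q$ copies of a space whose ``multiplicity space'' has dimension equal to the number of $d$ with $k\le \min(d,n-d)$, i.e. $d$ ranging over $k,k+1,\dots,n-k$ — exactly $n-2k+1$ values. Since $\lap{0}{n}{q}$ is equivariant, Schur's lemma forces it to act as $I\otimes\tilde L_k$ on this component for some $(n-2k+1)\times(n-2k+1)$ matrix $\tilde L_k$, which gives the block structure claimed, with the index $i$ (resp. $j$) of $\tilde L_k$ naturally labeled by the dimension $d=i$ (resp. $d=j$).

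Next I would identify $\tilde L_k$ explicitly. The cleanest route is to pick a convenient nonzero vector in the $k$-isotypic part of each $M_d$ and compute how $\lap{0}{n}{q}$ moves mass between consecutive dimensions. For each pair $d<d'$ with both in $[k,n-k]$, one needs the matrix entry of $\lap{0}{n}{q}$ averaged appropriately: using Corollary \ref{cor:laplacian_formula_0_dim}, the contribution from $U$ of dimension $i$ to the image in dimension $j$ is $-\binom{n-i}{j-i}_q^{-1}$ summed over the $\binom{n-i}{j-i}_q$ superspaces (for $i<j$), or $-\binom{i}{j}_q^{-1}$ over subspaces (for $i>j$), and these sums interact with the chosen $k$-isotypic test vectors. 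Concretely I expect to build explicit $k$-isotypic vectors as alternating sums $\sum_{r=0}^k (-1)^{k-r}(\text{coefficient})\,(\text{sum over }r\text{-dimensional subspaces of a reference }d\text{-space})$-type expressions — this is where the peculiar-looking constant $c_{ijk}$, with its $q^{\binom{r+1}{2}+\binom{k}{2}-rk}\binom{k}{r}_q$ weights, will emerge from a $q$-binomial identity for counting chains of subspaces. The $k=0$ case, where the isotypic component is the span of the constant functions on each Grassmannian and $\tilde L_0$ is just the Laplacian of the ``path-like'' transfer between dimensions, is a sanity check: there the off-diagonal entries should collapse to $-1$ after summing $\binom{n-i}{j-i}_q^{-1}$ over all $\binom{n-i}{j-i}_q$ superspaces, which indeed gives exactly $-1$, matching \eqref{eq:lap_block_formula_k0}.

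There are really two things to pin down carefully. First is the decomposition itself: I would cite or reproduce the standard fact that $M_d$ (the $\QQ$- or $\Rea$-span of $d$-subspaces of $\FF_q^n$) is multiplicity-free with the stated constituents, which follows from the Hecke-algebra/Gelfand-pair structure of $(\mathrm{GL}_n(\FF_q),\text{parabolic})$; equivalently, the intersection matrices of the Grassmann association scheme have these eigenspaces. Second, and this is where care is needed, is checking that $\lap{0}{n}{q}$ restricted to a single irreducible copy $S_{(n-k,k)}\subset M_d$ lands in the corresponding copy inside $M_{d'}$ with exactly the scalar recorded in \eqref{eq:lap_block_formula}; this requires a compatible choice of isomorphisms $S_{(n-k,k)}^{(d)}\cong S_{(n-k,k)}^{(d')}$ across all $d$ so that the whole block is genuinely $I\otimes\tilde L_k$ in one basis $B$, not merely block-diagonal up to conjugation in each multiplicity block. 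Fixing such coherent intertwiners — e.g. via the ``up'' maps $M_d\to M_{d+1}$, $U\mapsto\sum_{U\subset U',\dim U'=d+1}U'$, and their adjoints, which are $\mathrm{GL}_n$-equivariant and known to be injective/surjective on the relevant isotypic ranges — is the main technical obstacle, since one must verify these maps restrict to isomorphisms between the $k$-isotypic parts for all $k\le d\le n-k$ and then compute the resulting tridiagonal-ish entries.

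The hard part, then, is the bookkeeping in computing $c_{ijk}$: one must evaluate, for a fixed reference $r$-dimensional subspace $R$ inside a $d'$-dimensional space and a test function on $d$-subspaces that is the $r=k$ alternating combination, the quantity $\sum_{W}\binom{n-d}{d'-d}_q^{-1}$ twisted by how many $k$-subspaces of various positions each $d$-superspace contains — and massage the resulting double $q$-binomial sum into the closed form stated. I would do this by induction on $k$ or by a generating-function manipulation with the $q$-Vandermonde identity, checking the small cases $k=0,1$ by hand against \eqref{eq:lap_block_formula_k0} and against Garland's diagonal value $n-2$ on $i=j$.
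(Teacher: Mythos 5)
Your plan is, at bottom, the paper's plan translated into representation-theoretic language, but the translation hides the one place where real work is needed, and that is precisely where your sketch stops short.

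The correspondence is this: your irreducible $S_{(n-k,k)}\subset M_d$ is the paper's subspace $A_{dk}\tilde E^k\subset E^d$, where $A_{ij}$ is the subspace-inclusion matrix and $\tilde E^k\subset E^k$ is the orthogonal complement of $A_{k,k-1}E^{k-1}$. The ``compatible isomorphisms across all $d$'' that you correctly flag as the main obstacle are supplied concretely by the maps $v\mapsto A_{ik}v$ for a fixed $v\in\tilde E^k$: the paper takes a basis $B_k$ of $\tilde E^k$ and sets $B=\bigcup_k\bigcup_{v\in B_k}\{A_{ik}v:\max(1,k)\le i\le\min(n-1,n-k)\}$. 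That this is a basis is Proposition \ref{prop:ei_decomposition}, proved by a short induction using Kantor's rank theorem (Theorem \ref{thm:kantor}) and the absorption identity $A_{ij}A_{jk}=\binom{i-k}{j-k}_qA_{ik}$ for $k\le j\le i$ (Lemma \ref{lemma:product1}). This is exactly the multiplicity-freeness you want, but obtained elementarily, without Schur's lemma, Gelfand pairs, or the Grassmann scheme — the paper mentions the $\gl{n,q}$-module interpretation only as a side remark after the proposition. So this part of your outline is fine, and in substance identical.

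The genuine gap is in how you propose to produce $c_{ijk}$. You want to build explicit ``alternating-sum'' test vectors in the $k$-isotypic piece, pair them against the Laplacian, and then resolve the resulting double sum ``by induction on $k$ or by a generating-function manipulation with the $q$-Vandermonde identity.'' That is not the mechanism, and $q$-Vandermonde alone will not give you the closed form. What actually happens: applying $\lap{0}{n}{q}$ to $A_{jk}v$ with $v\in\tilde E^k$ produces terms of the form $A_{ij}A_{jk}v$. For $i>j\ge k$ this collapses by Lemma \ref{lemma:product1} to $\binom{i-k}{j-k}_qA_{ik}v$, which is why the $i>j$ entries of $\tilde L_k$ are simple. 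For $i<j$ one needs the non-trivial ``other direction'' product formula $A_{ij}A_{jk}=\sum_{m=0}^k c_{ijkm}A_{im}A_{mk}$ (Lemma \ref{lemma:product2}), whose coefficients are \emph{defined implicitly} by the triangular system $\sum_{r=0}^m\binom{m}{r}_q c_{ijkr}=\binom{n-i-k+m}{j-i-k+m}_q$ (counting $j$-spaces over $U+W$ with $\dim(U\cap W)=m$), and that system is inverted not by $q$-Vandermonde but by Carlitz's $q$-analogue of binomial inversion (Lemma \ref{lemma:carlitz}) — that inversion kernel is exactly where the weight $q^{\binom{r+1}{2}+\binom{k}{2}-rk}\binom{k}{r}_q$ comes from. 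Then the orthogonality relations $A_{mk}v=0$ for $m<k$ (Lemma \ref{lemma:orthogonal1}) and $A_{ij}A_{jk}v=0$ for $i<k\le j$ (Lemma \ref{lemma:orthogonal2}) kill every term except $m=k$, leaving $c_{ijk}A_{ik}v$. Your sketch does not produce, or even correctly anticipate, the inversion step; without it the ``peculiar-looking constant'' stays a mystery and the proof is incomplete.
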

\begin{remark}[Remarks.] \
\begin{enumerate}
    \item The symbol $\otimes$ in Theorem \ref{thm:diagonal_representation} denotes the Kronecker product. Recall that for matrices $A\in \Rea^{n\times n}$ and $B\in \Rea^{m\times m}$, $A\otimes B$ is the $nm\times nm$ matrix obtained from $A$ by replacing each element $a_{ij}$ with the $m\times m$ block $a_{ij} B$ (so that $I_{\binom{n}{k}_q-\binom{n}{k-1}_q}\otimes \tilde{L}_k$ is just a block diagonal matrix formed by $\binom{n}{k}_q-\binom{n}{k-1}_q$ copies of $\tilde{L}_k$).
   \item  Note that, for $k\geq 1$, the rows and columns of the matrices $\tilde{L}_k$ in Theorem \ref{thm:diagonal_representation} are indexed from $k$ to $n-k$ (and not from $1$ to $n-2k+1$).
\end{enumerate}
\end{remark}
One of the main ``advantages" in the choice of the basis $B$ in Theorem \ref{thm:diagonal_representation} is that the size of the blocks $\tilde{L}_k$ does not depend on $q$, which facilitates the study of the spectrum of $\lap{0}{n}{q}$ as $q$ grows. The proof of Theorem \ref{thm:diagonal_representation} relies on the study of subspace inclusion matrices, introduced by Kantor in \cite{kantor1972incidence}.

The second step in the proof of Theorem \ref{thm:main_result} consists of estimating the eigenvalues of the matrices $\tilde{L}_k$ defined in Theorem \ref{thm:diagonal_representation}. In order to do this, we establish a relation between
the characteristic polynomial of $\tilde{L}_k$ and the sign-alternating Fibonacci polynomials studied by Donnelly, Dunkum, Huber and Knupp in \cite{donnellysign}.

This paper is organized as follows. In Section \ref{sec:prelims} we give some background on Laplacian operators on simplicial complexes and on $q$-binomial coefficients. In Section \ref{sec:weight} we present some simple results on the weight function $w$ and use them to prove Lemma \ref{lemma:laplacian_formula} and Corollary \ref{cor:laplacian_formula_0_dim}.
In Section \ref{sec:lap_inclusion_matrices} we introduce the ``subspace inclusion matrices" $A_{ij}$, we study some of their properties and explain their relation to the Laplacian matrix $\lap{0}{n}{q}$. In Section \ref{sec:proof_diag_form} we present a proof of Theorem \ref{thm:diagonal_representation}. Section \ref{sec:spectra} deals with the last step in the proof of Theorem \ref{thm:main_result}, the evaluation of the spectra of the matrices $\tilde{L}_k$. In the last section we make some remarks on possible directions for future research.

\section{Preliminaries}\label{sec:prelims}

\subsection{Weighted Laplacians}\label{sec:prelims_lap}

A \emph{simplicial complex} $X$ is a family of subsets of some finite set $V$, such that if $\sigma\in X$ and $\sigma'\subset \sigma$, then $\sigma'\in X$. The set $V$ is called the \emph{vertex set} of $X$. The elements of $X$ are called the \emph{simplices} or \emph{faces} of the complex. The \emph{dimension} of a simplex $\sigma\in X$ is defined as $|\sigma|-1$.

Let $X$ be a simplicial complex on vertex set $V$, and let $k\geq -1$. 
An \emph{ordered $k$-simplex} $[v_0,\ldots,v_k]$ is a $k$-dimensional simplex $\{v_0,\ldots,v_k\}\in X$ together with an order of its vertices. For ordered simplices $\sigma,\tau$ such that $\sigma=\tau$ as sets, let $(\sigma:\tau)$ be the sign of the permutation mapping $\sigma$ to $\tau$. Similarly, for ordered simplices $\sigma,\tau$ in $X$ such that $\tau\subset \sigma$ and $\sigma\setminus\tau= \{v\}$ for some vertex $v\in \sigma$, let $(\sigma:\tau)$ be the sign of the permutation mapping $\sigma$ to $v\tau$ (where $v\tau$ is the ordering of $\tau\cup\{v\}$ where $v$ is the first vertex, followed by the vertices of $\tau$ in their original order).

A \emph{$k$-cochain} is an $\mathbb{R}$-valued skew-symmetric function on the ordered $k$-simplices of $X$. That is, $\phi$ is a $k$-cochain if for any two ordered $k$-simplices $\sigma,\tilde{\sigma}$ in $X$ that are equal as sets, it satisfies $\phi(\tilde{\sigma})=(\sigma:\tilde{\sigma}) \phi(\sigma)$.

Let $C^k(X)$ denote the space of $k$-cochains on $X$. The \emph{coboundary operator} $d_k: C^k(X)\to C^{k+1}(X)$ is defined by
\[
    d_k\phi([v_0,\ldots,v_{k+1}]) = \sum_{i=0}^{k+1} (-1)^i \phi([v_0,\ldots,v_{i-1},v_{i+1},\ldots,v_{k+1}])
\]
for every $k$-cochain $\phi$ and every ordered $(k+1)$-simplex $[v_0,\ldots,v_{k+1}]$.

Let $X(k)$ be the set of all $k$-simplices of $X$, each given a fixed arbitrary order.
Let $w: X \to \Rea^{+}$ be a weight function on the simplices of $X$. We define an inner product on the vector space $C^k(X)$ by
\[
    \langle \phi,\psi \rangle =\sum_{\sigma\in X(k)} w(\sigma)\phi(\sigma)\psi(\sigma).
\]
Let $d_k^*$ be the operator adjoint to $d_k$ with respect to this inner product. We define the \emph{weighted upper $k$-Laplacian} $\Delta_k^{+}(X): C^k(X)\to C^k(X)$ by $\Delta_k^+(X)= d_k^* d_k.$

Note that, by definition, $\Delta_k^{+}(X)$ is a positive semi-definite operator. In particular, it is diagonalizable, and all its eigenvalues are real and non-negative.

Let $k\geq 0$ and $\sigma\in X(k)$. We define the $k$-cochain $1_{\sigma}$ by
\[
1_{\sigma}(\tau)=\begin{cases}
(\sigma: \tau) & \text{ if $\sigma=\tau$ (as sets)},\\
0 & \text{ otherwise.}
\end{cases}
\]

The set $\{1_{\sigma}\}_{\sigma\in X(k)}$ forms a basis of the space $C^k(X)$, that we will call the standard basis.
We will identify the operator $\Delta_k^{+}(X)$ with its matrix representation in the standard basis. For $\sigma,\tau\in X(k)$, we will denote by $\Delta_k^{+}(X)_{\sigma,\tau}$ the matrix element at row indexed by $1_\sigma$ and column indexed by $1_\tau$. We say that $v\in \lk(X,\sigma)$ if $v\in V\setminus \sigma$ and $\sigma\cup\{v\}\in X$.

\begin{lemma}[See e.g. {\cite{horak2013spectra}}]
\label{lemma:laplacian_matrix_rep}
Let $\sigma,\tau\in X(k)$. Then, 
\[
    \Delta_k^{+}(X)_{\sigma,\tau}=\begin{cases}
        \sum_{v\in \lk(X,\sigma)} \frac{w(\sigma\cup\{v\})}{w(\sigma)} & \text{ if } \sigma=\tau,\\
        -\frac{w(\sigma\cup\tau)}{w(\sigma)}(\sigma:\sigma\cap\tau)(\tau:\sigma\cap\tau) & \text{ if } |\sigma\cap\tau|=k,\, \sigma\cup \tau\in X(k+1),\\
        0 & \text{ otherwise.}
    \end{cases}
\]
\end{lemma}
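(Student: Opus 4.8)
The statement to prove is Lemma \ref{lemma:laplacian_matrix_rep}, which gives the matrix representation of the weighted upper Laplacian $\Delta_k^+(X) = d_k^* d_k$ in the standard basis $\{1_\sigma\}_{\sigma \in X(k)}$.

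Let me sketch how I'd prove this.

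The plan is to compute $\Delta_k^+(X) = d_k^* d_k$ directly by working out $d_k$ and $d_k^*$ on the standard basis. First I would understand $d_k 1_\sigma$: for an ordered $k$-simplex $\sigma$, $d_k 1_\sigma$ is a $(k+1)$-cochain, and its value on an ordered $(k+1)$-simplex $\rho$ is nonzero precisely when $\sigma$ is a face of $\rho$ (i.e., $\rho = \sigma \cup \{v\}$ for some $v \in \lk(X, \sigma)$). One gets $d_k 1_\sigma = \sum_{v \in \lk(X,\sigma)} (v\sigma : \rho_v) 1_{\rho_v}$ where $\rho_v = \sigma \cup \{v\}$, or more cleanly $d_k 1_\sigma = \sum_{\rho \supset \sigma, \rho \in X(k+1)} (\rho : \sigma) 1_\rho$, using the sign convention $(\rho:\sigma)$ defined in the preliminaries.

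Next I would compute the adjoint $d_k^*$. Since the inner product on $C^k(X)$ is $\langle \phi, \psi \rangle = \sum_{\sigma \in X(k)} w(\sigma) \phi(\sigma)\psi(\sigma)$, the standard basis is orthogonal with $\langle 1_\sigma, 1_\sigma \rangle = w(\sigma)$. The adjoint relation $\langle d_k \phi, \psi \rangle = \langle \phi, d_k^* \psi \rangle$ then lets me read off $d_k^* 1_\rho$ for $\rho \in X(k+1)$: testing against $1_\sigma$ gives $\langle 1_\sigma, d_k^* 1_\rho \rangle = \langle d_k 1_\sigma, 1_\rho \rangle = (\rho:\sigma) w(\rho)$ when $\sigma \subset \rho$ and $0$ otherwise, so $d_k^* 1_\rho = \sum_{\sigma \subset \rho, \sigma \in X(k)} (\rho:\sigma) \frac{w(\rho)}{w(\sigma)} 1_\sigma$.

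Finally I would compose: $\Delta_k^+(X) 1_\tau = d_k^* d_k 1_\tau = d_k^*\left( \sum_{\rho \supset \tau} (\rho:\tau) 1_\rho \right) = \sum_{\rho \supset \tau} (\rho:\tau) \sum_{\sigma \subset \rho} (\rho:\sigma) \frac{w(\rho)}{w(\sigma)} 1_\sigma$, and then extract the coefficient of $1_\sigma$, i.e. $\Delta_k^+(X)_{\sigma,\tau}$. When $\sigma = \tau$, only $\rho = \sigma \cup \{v\}$ with $v \in \lk(X,\sigma)$ contribute, each giving $(\rho:\sigma)^2 \frac{w(\rho)}{w(\sigma)} = \frac{w(\sigma \cup \{v\})}{w(\sigma)}$, summing to the diagonal term. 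When $\sigma \neq \tau$, a $(k+1)$-simplex $\rho$ can contain both only if $\rho = \sigma \cup \tau$, which forces $|\sigma \cap \tau| = k$ and $\sigma \cup \tau \in X(k+1)$; then the unique contributing $\rho$ gives $(\rho:\tau)(\rho:\sigma)\frac{w(\rho)}{w(\sigma)}$, and the main thing to verify is the sign identity $(\rho:\sigma)(\rho:\tau) = -(\sigma:\sigma\cap\tau)(\tau:\sigma\cap\tau)$. This is the one genuinely delicate point: it is a purely combinatorial fact about signs of permutations, proved by writing $\rho = uv(\sigma\cap\tau)$ in a suitable order (with $u$ the vertex in $\sigma \setminus \tau$ and $v$ the vertex in $\tau \setminus \sigma$), tracking how $(\rho:\sigma)$ and $(\rho:\tau)$ each compare $\rho$ to a reordering that brings the omitted vertex to the front, and observing that the two computations differ by a single transposition of $u$ and $v$. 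Apart from this sign bookkeeping, everything is a routine unwinding of definitions; I would cite \cite{horak2013spectra} for the standard statement and include the sign verification for completeness.
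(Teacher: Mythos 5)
The paper does not prove this lemma itself; it simply cites Horak--Jost \cite{horak2013spectra}. Your derivation is the standard one and is correct: compute $d_k 1_\sigma = \sum_{\rho\supset\sigma}(\rho:\sigma)1_\rho$, use orthogonality of the basis under the weighted inner product to get $d_k^* 1_\rho = \sum_{\sigma\subset\rho}(\rho:\sigma)\tfrac{w(\rho)}{w(\sigma)}1_\sigma$, compose, and extract the coefficient of $1_\sigma$ in $\Delta_k^+ 1_\tau$. The only genuinely delicate point is the sign identity $(\rho:\sigma)(\rho:\tau)=-(\sigma:\sigma\cap\tau)(\tau:\sigma\cap\tau)$, and your factor-through-a-transposition argument (compare $v\sigma \to vu\eta \to uv\eta \to u\tau$, picking up exactly one extra transposition at the middle step) is exactly the right way to see it. Note also that with the paper's convention $\Delta_k^+(X)_{\sigma,\tau}$ is the coefficient of $1_\sigma$ in $\Delta_k^+ 1_\tau$, so the resulting matrix is not symmetric (the off-diagonal carries $w(\sigma\cup\tau)/w(\sigma)$ rather than a symmetrized weight); your computation produces exactly this asymmetric form, consistent with the statement.
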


\subsection{$q$-Binomial coefficients}\label{sec:bin_prelims}

Let $q$ be a prime power, and let $a,b$ be integers. The \emph{$q$-binomial coefficient} $\binom{a}{b}_q$ is the number of $b$-dimensional subspaces contained in $\FF_q^a$. More explicitly, we have
\begin{equation}\label{eq:binom}
    \binom{a}{b}_q=\frac{\prod_{i=1}^a (q^i-1)}{\prod_{i=1}^b (q^i-1) \prod_{i=1}^{a-b}(q^i-1)}= \frac{\prod_{i=a-b+1}^a (q^i-1)}{\prod_{i=1}^{b}(q^i-1)}
\end{equation}
if $a\geq b\geq 0$, and $\binom{a}{b}_q=0$ otherwise. 

We will need the following well known basic result on $q$-binomial coefficients:
\begin{lemma}\label{lemma:countsubspaces}
Let $0\leq r\leq k\leq n$. Let $U$ be an $r$-dimensional subspace of $\FF_q^n$. Then, the number of $k$-dimensional subspaces of $\FF_q^n$ that contain $U$ is $\binom{n-r}{k-r}_q$.
\end{lemma}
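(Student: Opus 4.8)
The plan is to reduce the statement to the definition of the $q$-binomial coefficient by passing to the quotient space $\FF_q^n/U$. Let $\pi\colon \FF_q^n\to \FF_q^n/U$ be the canonical projection; since $\dim U=r$, the quotient is an $(n-r)$-dimensional vector space over $\FF_q$. The heart of the argument is the claim that the maps $W\mapsto \pi(W)$ and $\bar W\mapsto \pi^{-1}(\bar W)$ are mutually inverse bijections between the set of $k$-dimensional subspaces of $\FF_q^n$ containing $U$ and the set of $(k-r)$-dimensional subspaces of $\FF_q^n/U$.

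To establish this I would proceed in three short steps. First, I would check that a subspace $W\subseteq\FF_q^n$ contains $U$ if and only if $W=\pi^{-1}(\pi(W))$: if $U\subseteq W$ and $v\in\pi^{-1}(\pi(W))$ then $v-w\in U\subseteq W$ for some $w\in W$, so $v\in W$, and the reverse inclusion is immediate; conversely, any preimage $\pi^{-1}(\bar W)$ contains $\ker\pi=U$. Together with the surjectivity of $\pi$ (which shows every $\bar W$ is attained, by $W=\pi^{-1}(\bar W)$), this yields the bijection on the level of all subspaces containing $U$. Second, I would do the dimension bookkeeping: for $W\supseteq U$ the restriction $\pi|_W\colon W\to\pi(W)$ is surjective with kernel $W\cap U=U$, so $\dim W=\dim\pi(W)+r$ by rank--nullity; hence $W$ has dimension $k$ exactly when $\pi(W)$ has dimension $k-r$, and the bijection above restricts to the claimed one. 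Third, I would invoke the definition recalled in (\ref{eq:binom}): the number of $(k-r)$-dimensional subspaces of the $(n-r)$-dimensional $\FF_q$-space $\FF_q^n/U$ is by definition $\binom{n-r}{k-r}_q$ (note $0\le k-r\le n-r$, so this is given by the explicit product formula), and composing with the bijection finishes the proof.

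There is no genuine obstacle here---the whole proof is routine linear algebra---so the only real choice is presentation. An alternative that avoids quotients is a double count of pairs $(W,\mathcal{B})$, where $W$ ranges over $k$-dimensional subspaces containing $U$ and $\mathcal{B}$ over ordered bases of $W$ extending a fixed ordered basis of $U$: given $U$ there are $\prod_{i=r}^{k-1}(q^n-q^i)$ such $\mathcal{B}$, and given $W$ there are $\prod_{i=r}^{k-1}(q^k-q^i)$ of them inside $W$, so the number of subspaces is the ratio $\prod_{i=r}^{k-1}\frac{q^{n-i}-1}{q^{k-i}-1}$, which after reindexing is exactly the second product expression for $\binom{n-r}{k-r}_q$ in (\ref{eq:binom}). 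I would present the quotient argument as the main proof, since it is shorter and conceptually transparent, and at most mention the double-counting variant.
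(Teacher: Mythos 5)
Your proposal is correct and follows exactly the paper's argument: the paper's proof is the same quotient-space correspondence $W\mapsto W/U$ between $k$-dimensional subspaces containing $U$ and $(k-r)$-dimensional subspaces of $\FF_q^n/U\cong\FF_q^{n-r}$, which you have simply spelled out in more detail. The double-counting variant you mention is a fine alternative but is not needed.
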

\begin{proof}
The number of $k$-dimensional subspaces of $\FF_q^n$ containing $U$ is equal to the number of $(k-r)$-dimensional subspaces of $\FF_q^n/U\cong \FF_q^{n-r}$. Hence, there are $\binom{n-r}{k-r}_q$ such subspaces.
\end{proof}

We will need the following simple results about the behavior of the $q$-binomial coefficients as $q$ tends to infinity.
We will use the following asymptotic notation: Let $f,g:\mathbb{R}\to\mathbb{R}$. We say that $f(x)=\bigoh{g(x)}$ if there exist $x_0, C>0$ such that $|f(x)|\leq C g(x)$ for all $x\geq x_0$.

\begin{lemma}\label{lemma:limit_of_qbinoms}
Let $0\leq  b\leq a$, and let $k\leq b$. Then,
\[
  \binom {a-k}{b-k}_q \binom{a}{b}_q^{-1}= q^{-k(a-b)}\left(1+O(q^{-1})\right).
\]
\end{lemma}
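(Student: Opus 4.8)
The plan is to compute $\binom{a-k}{b-k}_q \binom{a}{b}_q^{-1}$ directly from the product formula \eqref{eq:binom} and extract the leading power of $q$. Using the second expression in \eqref{eq:binom}, we have
\[
\binom{a-k}{b-k}_q = \frac{\prod_{i=a-b+1}^{a-k}(q^i-1)}{\prod_{i=1}^{b-k}(q^i-1)},
\qquad
\binom{a}{b}_q = \frac{\prod_{i=a-b+1}^{a}(q^i-1)}{\prod_{i=1}^{b}(q^i-1)}.
\]
Taking the ratio, most of the factors cancel: the numerator of $\binom{a-k}{b-k}_q$ and the numerator of $\binom{a}{b}_q$ share the factors $\prod_{i=a-b+1}^{a-k}(q^i-1)$, and similarly for the denominators. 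What survives is
\[
\binom{a-k}{b-k}_q \binom{a}{b}_q^{-1}
= \frac{\prod_{i=b-k+1}^{b}(q^i-1)}{\prod_{i=a-k+1}^{a}(q^i-1)}.
\]

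Now I would estimate this surviving ratio. Each factor $q^i-1$ equals $q^i(1-q^{-i})$, so the numerator is $q^{\sum_{i=b-k+1}^{b} i}\prod_{i=b-k+1}^{b}(1-q^{-i})$ and the denominator is $q^{\sum_{i=a-k+1}^{a} i}\prod_{i=a-k+1}^{a}(1-q^{-i})$. The difference of exponents is $\sum_{i=b-k+1}^{b} i - \sum_{i=a-k+1}^{a} i = -k(a-b)$, since shifting a sum of $k$ consecutive integers up by $a-b$ increases it by $k(a-b)$. This gives the claimed factor $q^{-k(a-b)}$. For the remaining product terms, each $\prod (1-q^{-i})$ is a finite product (exactly $k$ factors each) of terms of the form $1+O(q^{-1})$, so both products are $1+O(q^{-1})$, and hence so is their ratio (the denominator is bounded away from $0$ for $q$ large, say $q\geq 2$, since each factor lies in $(0,1]$ with the smallest exponent $i\geq 1$ giving $1-q^{-1}\geq 1/2$). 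Multiplying through yields the stated estimate.

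This proof is essentially a bookkeeping exercise, so I do not anticipate a genuine obstacle; the one point requiring a little care is the cancellation step, where one must verify that the index ranges line up correctly. Specifically, one should check that $a-b+1 \leq a-k$ (equivalently $k\leq b$, which is our hypothesis) so that the numerator product of $\binom{a-k}{b-k}_q$ is a genuine sub-product of that of $\binom{a}{b}_q$, and that $b-k\leq b$ and $a-k\leq a$ similarly behave, including the degenerate cases $k=0$ or $b=a$ where some products are empty and equal $1$. Once the index arithmetic is pinned down, the rest is the elementary factorization $q^i-1 = q^i(1-q^{-i})$ and the arithmetic-progression identity for the exponent.
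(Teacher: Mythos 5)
Your direct-cancellation approach is sound in spirit and arguably cleaner than the paper's (which compares degrees of monic polynomials), but as written your proof silently assumes $k\geq 0$, which is a genuine gap: the hypothesis only requires $k\leq b$, so negative $k$ is allowed, and the paper in fact applies this lemma with $k<0$ in the very next proof (Lemma \ref{lemma:qbinom_asymptotics} uses it with $k=-b$). Concretely, your surviving-factor identity
\[
\binom{a-k}{b-k}_q \binom{a}{b}_q^{-1}
= \frac{\prod_{i=b-k+1}^{b}(q^i-1)}{\prod_{i=a-k+1}^{a}(q^i-1)}
\]
only makes literal sense for $k\geq 0$; if $k<0$ both ranges $[b-k+1,b]$ and $[a-k+1,a]$ run backwards. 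Likewise your remark that "$b-k\leq b$ and $a-k\leq a$ similarly behave" is exactly the assumption $k\geq 0$. (Minor side note: $a-b+1\leq a-k$ is equivalent to $k\leq b-1$, not $k\leq b$; the case $k=b$ works anyway because the numerator sub-product is empty.)

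The fix is small. For $k<0$ the cancellation runs the other way: since $a-k>a$ and $b-k>b$, the uncancelled factors sit in the numerator of $\binom{a-k}{b-k}_q$ and the denominator of $\binom{a}{b}_q$, giving
\[
\binom{a-k}{b-k}_q \binom{a}{b}_q^{-1}
= \frac{\prod_{i=a+1}^{a-k}(q^i-1)}{\prod_{i=b+1}^{b-k}(q^i-1)},
\]
a ratio of two products of $|k|$ factors each, all with index $i\geq 1$. The same extraction $q^i-1=q^i(1-q^{-i})$ now yields exponent $\sum_{i=a+1}^{a-k} i - \sum_{i=b+1}^{b-k} i = (-k)(a-b) = -k(a-b)$ and an error factor of $1+O(q^{-1})$, matching the claim. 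Once you add this second case (exactly as the paper splits its degree computation into $k\geq 0$ and $k<0$), your argument is complete and gives a more elementary route than the paper's polynomial-degree comparison.
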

\begin{proof}
By \eqref{eq:binom}, we can write $\binom{a-k}{b-k}_q= f(q)/g(q)$, where $f$ is a monic polynomial in $q$ of degree $\sum_{i=a-b+1}^{a-k} i= \frac{1}{2}(b-k)(2a-b-k+1)$ and $g$ is a monic polynomial in $q$ of degree $\sum_{i=1}^{b-k} i=\frac{1}{2}(b-k)(b-k+1)$. Similarly, we can write $\binom{a}{b}_q=h(q)/s(q)$, where $h$ is a monic polynomial in $q$ of degree $\frac{1}{2}b(2a-b+1)$ and $s$ is a monic polynomial in $q$ of degree $\frac{1}{2}b(b+1)$.
Therefore, if $k\geq 0$, we can write
\begin{align*}
\frac{\binom{a-k}{b-k}_q \cdot q^{k(a-b)}}{\binom{a}{b}_q}-1 &= 
\frac{\frac{f(q)}{g(q)}\cdot q^{k(a-b)}}{\frac{h(q)}{s(q)}} -1 
=\frac{f(q)\cdot s(q)\cdot q^{k(a-b)}}{h(q)\cdot g(q)}-1
\\
& = \frac{f(q)\cdot s(q)\cdot q^{k(a-b)} - h(q)\cdot g(q)}{h(q)\cdot g(q)}
\end{align*}
Note that
\[
\deg(f(q)\cdot s(q)\cdot q^{k(a-b)})=\frac{1}{2} (2ab+2b-2bk+k^2-k) = \deg(h(q)\cdot g(q)). 
\]
Since $f(q)\cdot s(q)\cdot q^{k(a-b)}$ and $h(q)\cdot g(q)$ are both monic polynomials, the degree of the polynomial $f(q)\cdot s(q)\cdot q^{k(a-b)}-h(q)\cdot g(q)$ is strictly smaller than that of $h(q)\cdot g(q)$. So, we obtain
\[
\frac{\binom{a-k}{b-k}_q \cdot q^{k(a-b)}}{\binom{a}{b}_q}-1= O(q^{-1}).
\]
Similarly, if $k<0$, we can write
\[
\frac{\binom{a-k}{b-k}_q \cdot q^{k(a-b)}}{\binom{a}{b}_q}-1 = \frac{\binom{a-k}{b-k}_q}{\binom{a}{b}_q\cdot q^{-k(a-b)}}-1
= \frac{f(q)\cdot s(q) - h(q)\cdot g(q) \cdot q^{-k(a-b)}}{h(q)\cdot g(q)\cdot q^{-k(a-b)}}.
\]
Note that
\[
\deg(f(q)\cdot s(q))=\deg(h(q)\cdot g(q)\cdot q^{-k(a-b)}). 
\]
Since $f(q)\cdot s(q)$ and $h(q)\cdot g(q)\cdot q^{-k(a-b)}$ are both monic polynomials, the degree of the polynomial $f(q)\cdot s(q) -h(q)\cdot  g(q)\cdot q^{-k(a-b)}$ is strictly smaller than that of $h(q)\cdot g(q)\cdot q^{-k(a-b)}$. So, we obtain
\[
\frac{\binom{a-k}{b-k}_q \cdot q^{k(a-b)}}{\binom{a}{b}_q}-1= O(q^{-1}),
\]
as wanted.
\end{proof}

As an immediate consequence, we obtain
\begin{lemma}\label{lemma:qbinom_asymptotics}
Let $a\geq b\geq 0$ be integers. Then,
\[
\binom{a}{b}_q = q^{b(a-b)}\left(1+O(q^{-1})\right).
\]
\end{lemma}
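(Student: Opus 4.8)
The plan is to obtain this as the announced immediate consequence of Lemma \ref{lemma:limit_of_qbinoms}, by specializing $k=b$. With that choice the hypothesis $k\le b$ is satisfied, and the left-hand side of Lemma \ref{lemma:limit_of_qbinoms} becomes $\binom{a-b}{0}_q\binom{a}{b}_q^{-1}=\binom{a}{b}_q^{-1}$, while $q^{-k(a-b)}=q^{-b(a-b)}$. Thus Lemma \ref{lemma:limit_of_qbinoms} already gives
\[
\binom{a}{b}_q^{-1}=q^{-b(a-b)}\left(1+O(q^{-1})\right).
\]
Inverting both sides and using that $\left(1+O(q^{-1})\right)^{-1}=1+O(q^{-1})$ for $q$ sufficiently large (so that the error term is, say, bounded in absolute value by $1/2$) yields the claim.

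Alternatively, and this is essentially the same computation written out directly, one can argue from the product formula \eqref{eq:binom}: write $\binom{a}{b}_q=f(q)/g(q)$ with $f(q)=\prod_{i=a-b+1}^{a}(q^i-1)$ monic of degree $\tfrac12 b(2a-b+1)$ and $g(q)=\prod_{i=1}^{b}(q^i-1)$ monic of degree $\tfrac12 b(b+1)$, so that $\deg f-\deg g=b(a-b)$. Then $f(q)-q^{b(a-b)}g(q)$ is a polynomial of degree strictly less than $\deg f$, whence $\binom{a}{b}_q\cdot q^{-b(a-b)}-1=\bigl(f(q)-q^{b(a-b)}g(q)\bigr)/\bigl(q^{b(a-b)}g(q)\bigr)=O(q^{-1})$, which is exactly the assertion.

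There is no genuine obstacle here; the only point requiring a small amount of care is the passage from an asymptotic estimate for $\binom{a}{b}_q^{-1}$ to one for $\binom{a}{b}_q$, which is routine. I would record the one-line deduction from Lemma \ref{lemma:limit_of_qbinoms}.
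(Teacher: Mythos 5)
Your primary derivation is correct and uses the same key ingredient as the paper, namely Lemma \ref{lemma:limit_of_qbinoms}. The only difference is in how you instantiate it: you take $k=b$ (with the original $a,b$), which yields an estimate for $\binom{a}{b}_q^{-1}$, and then invert, with the standard remark that $(1+O(q^{-1}))^{-1}=1+O(q^{-1})$. The paper instead applies Lemma \ref{lemma:limit_of_qbinoms} with the shifted parameters $a\mapsto a-b$, $b\mapsto 0$, $k\mapsto -b$ (a negative $k$, which the statement and proof of that lemma explicitly allow), so that the quantity $\binom{(a-b)-(-b)}{0-(-b)}_q\binom{a-b}{0}_q^{-1}=\binom{a}{b}_q$ appears directly and no inversion is needed. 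Both are one-line deductions from the same lemma; the paper's choice just avoids the (harmless, but extra) inversion step. Your alternative direct computation from the product formula \eqref{eq:binom} is also correct and is essentially a re-derivation of the relevant special case of Lemma \ref{lemma:limit_of_qbinoms}'s proof.
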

\begin{proof}
We can write
\[
\binom{a}{b}_q= \binom{(a-b)-(-b)}{0-(-b)}_q \binom{a-b}{0}_q^{-1}. 
\]
So, by Lemma \ref{lemma:limit_of_qbinoms}, we obtain
\[
\binom{a}{b}_q = q^{b(a-b)}\left(1+O(q^{-1})\right).
\]
\end{proof}

We will also use the following inversion formula due to Carlitz:

\begin{lemma}[Carlitz \cite{carlitz1973some}]\label{lemma:carlitz}
Let $\{a_n\}_{n=0}^m$ and $\{b_n\}_{n=0}^m$ be two sequences such that
\[
    a_n= \sum_{k=0}^{n} \binom{n}{k}_q b_k
\]
for all $0\leq n\leq m$. Then,
\[
    b_n= \sum_{k=0}^{n} (-1)^{n-k} q^{\binom{k+1}{2}+\binom{n}{2}-kn} \binom{n}{k}_q a_k
\]
for all $0\leq n\leq m$.
\end{lemma}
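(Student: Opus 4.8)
The plan is to verify the inversion by direct substitution, reducing it to a single orthogonality relation for Gaussian binomial coefficients. First I would simplify the coefficient appearing in the statement: expanding both sides shows $\binom{k+1}{2}+\binom{n}{2}-kn = \binom{n-k}{2}$, so the claimed formula reads $b_n = \sum_{k=0}^n (-1)^{n-k} q^{\binom{n-k}{2}} \binom{n}{k}_q a_k$. Plugging in the hypothesis $a_k = \sum_{j=0}^{k} \binom{k}{j}_q b_j$ and interchanging the two summations, the right-hand side becomes
\[
\sum_{j=0}^{n} b_j \left( \sum_{k=j}^{n} (-1)^{n-k} q^{\binom{n-k}{2}} \binom{n}{k}_q \binom{k}{j}_q \right),
\]
so it suffices to show that the inner sum equals $\delta_{n,j}$ for every $0 \le j \le n$; this is where the whole content of the lemma sits.

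To evaluate the inner sum I would first use the ``subspace of a subspace'' identity $\binom{n}{k}_q \binom{k}{j}_q = \binom{n}{j}_q \binom{n-j}{k-j}_q$ --- which follows from Lemma~\ref{lemma:countsubspaces} by counting, in two ways, the pairs $(U,W)$ of subspaces of $\FF_q^n$ with $U \subseteq W$, $\dim U = j$, $\dim W = k$ --- and factor $\binom{n}{j}_q$ out of the sum. Writing $k = j+i$ and setting $m = n-j$, a short computation gives $\binom{n-k}{2} = \binom{m-i}{2}$, so the inner sum equals
\[
\binom{n}{j}_q \sum_{i=0}^{m} (-1)^{m-i} q^{\binom{m-i}{2}} \binom{m}{i}_q = \binom{n}{j}_q \sum_{i=0}^{m} (-1)^{i} q^{\binom{i}{2}} \binom{m}{i}_q,
\]
the second equality by the reindexing $i \mapsto m-i$ (using $\binom{m}{m-i}_q = \binom{m}{i}_q$).

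Finally I would invoke the alternating-sum identity $\sum_{i=0}^{m} (-1)^i q^{\binom{i}{2}} \binom{m}{i}_q = \delta_{m,0}$: it is the specialization $x=-1$ of the $q$-binomial theorem $\sum_{i=0}^{m} q^{\binom{i}{2}} \binom{m}{i}_q x^i = \prod_{t=0}^{m-1}(1+q^t x)$, and the factor at $t=0$ is $1+x=0$, so the product vanishes for $m \ge 1$ and is $1$ for $m=0$. Hence the inner sum is $\binom{n}{j}_q \cdot [\,n=j\,] = \delta_{n,j}$, since $\binom{n}{n}_q = 1$, and the displayed double sum collapses to $b_n$, as claimed. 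If one prefers to keep the paper self-contained rather than cite the $q$-binomial theorem, the auxiliary identity has a one-line proof by induction on $m$ using the $q$-Pascal recurrence $\binom{m}{i}_q = \binom{m-1}{i-1}_q + q^{i}\binom{m-1}{i}_q$.

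The whole argument is a routine interchange of summations together with a well-known Gaussian-binomial orthogonality; there is no genuine obstacle. The only place where care is needed is the bookkeeping of the $q$-exponents --- in particular checking that $\binom{k+1}{2}+\binom{n}{2}-kn$ really collapses to $\binom{n-k}{2}$ and survives the reindexing as $\binom{m-i}{2}$ --- since this is exactly what makes the sum telescope into the $q$-binomial theorem.
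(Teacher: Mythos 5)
The paper does not prove this lemma at all: it is quoted directly from Carlitz's paper as a known inversion formula, so there is no in-text argument to compare yours against. Your self-contained verification is correct. The exponent identity $\binom{k+1}{2}+\binom{n}{2}-kn=\binom{n-k}{2}$ checks out (both sides equal $\tfrac{1}{2}\bigl((n-k)^2-(n-k)\bigr)$), the trinomial-revision step $\binom{n}{k}_q\binom{k}{j}_q=\binom{n}{j}_q\binom{n-j}{k-j}_q$ follows exactly as you say by double-counting nested pairs of subspaces via Lemma~\ref{lemma:countsubspaces}, the reindexing preserves both the sign and the $q$-exponent, and the orthogonality relation $\sum_{i=0}^m(-1)^iq^{\binom{i}{2}}\binom{m}{i}_q=\delta_{m,0}$ is indeed the $x=-1$ case of the $q$-binomial theorem (or a one-line induction via the $q$-Pascal rule, as you note). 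One small presentational point: your argument establishes that the displayed expression for $b_n$ is a consequence of the hypothesis by direct substitution, which is precisely the right logical direction for the statement as given, so nothing further is needed. This would serve as a perfectly adequate self-contained proof if one did not wish to cite \cite{carlitz1973some}.
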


\section{The weight function}\label{sec:weight}

Let $V_1\subsetneq V_2\subsetneq \cdots \subsetneq V_k$ be a flag in $\FF_q^n$, and let $\sigma=\{V_1,\ldots,V_k\}\in\flagcomplex{n}{q}$. Recall that we defined the weight function $w(\sigma)$ to be the number of complete flags extending $\sigma$ or, equivalently, the number of maximal faces of $\flagcomplex{n}{q}$ containing $\sigma$.
In this section we discuss some useful properties of the weight function $w$.

Let $\gl{n,q}$ be the group of invertible $n\times n$ matrices over $\FF_q$. For $g\in \gl{n,q}$ and a subspace $V\subset \FF_q^n$, let
$
    gV=\{gv :\, v\in V\}.
$
Note that $gV$ is also a subspace of $\FF_q^n$ and that $\dim(gV)=\dim(V)$.

Let $F$ be the flag $V_1\subsetneq V_2\subsetneq \cdots \subsetneq V_k$. Then, we denote by $gF$ the flag
\[
    gV_1\subsetneq gV_2\subsetneq \cdots \subsetneq gV_k.
\]

First, we show that $w$ depends only on the dimensions of the subspaces forming the flag:

\begin{lemma}\label{lemma:weight_function_depends_on_dim}
Let $V_1\subsetneq V_2\subsetneq\cdots\subsetneq V_k$ and $W_1\subsetneq W_2\subsetneq\cdots\subsetneq W_k$ be two flags in $\FF_q^n$. Let $\sigma=\{V_1,\ldots,V_k\}$ and $\sigma'=\{W_1,\ldots,W_k\}$. If $\dim(V_i)=\dim(W_i)$ for all $1\leq i\leq k$, then $w(\sigma)=w(\sigma')$.
\end{lemma}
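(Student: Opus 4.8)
The plan is to exploit the transitivity of the $\gl{n,q}$-action on flags of a fixed dimension type, together with the fact that the weight function $w$ is manifestly $\gl{n,q}$-invariant. First I would observe that for any $g\in\gl{n,q}$ and any flag $F: V_1\subsetneq\cdots\subsetneq V_k$, the map $F'\mapsto gF'$ gives a bijection between the set of complete flags extending $F$ and the set of complete flags extending $gF$: indeed, $g$ acts as an automorphism of the poset of non-trivial subspaces of $\FF_q^n$ (it preserves inclusions and dimensions, and its inverse does too), so it induces a simplicial automorphism of $\flagcomplex{n}{q}$, hence it carries maximal faces containing $\sigma=\{V_1,\ldots,V_k\}$ bijectively to maximal faces containing $g\sigma=\{gV_1,\ldots,gV_k\}$. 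Consequently $w(\sigma)=w(g\sigma)$ for every $g\in\gl{n,q}$.

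Next I would show that when $\dim(V_i)=\dim(W_i)$ for all $i$, there exists $g\in\gl{n,q}$ with $gV_i=W_i$ for all $i$; then $g\sigma=\sigma'$ and the previous paragraph gives $w(\sigma)=w(\sigma')$, finishing the proof. To produce such a $g$, set $d_i=\dim(V_i)$ (with $d_0=0$, and write $d_{k+1}=n$), and choose a basis $v_1,\ldots,v_n$ of $\FF_q^n$ adapted to the flag $V_\bullet$, meaning $V_i=\operatorname{span}(v_1,\ldots,v_{d_i})$ for each $i$; such a basis exists by repeatedly extending a basis of $V_i$ to one of $V_{i+1}$, starting from a basis of $V_1$. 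Do the same for $W_\bullet$ to get an adapted basis $w_1,\ldots,w_n$. The linear map $g$ defined by $g(v_j)=w_j$ for $1\leq j\leq n$ is then invertible (it sends a basis to a basis) and satisfies $gV_i=\operatorname{span}(w_1,\ldots,w_{d_i})=W_i$ for every $i$, as required.

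The argument is essentially routine; the only point that needs a little care is the construction of a basis adapted to a flag, which is the standard fact that any basis of a subspace extends to a basis of a larger subspace, applied inductively along the chain $V_1\subsetneq\cdots\subsetneq V_k\subsetneq\FF_q^n$. I do not expect any genuine obstacle here — the lemma is a clean consequence of the fact that $\gl{n,q}$ acts transitively on flags of each fixed dimension type and that $w$, being defined purely combinatorially in terms of the complex $\flagcomplex{n}{q}$, is invariant under this action. It will be convenient to record, perhaps as a remark for later use, the slightly stronger statement actually proved: $w(g\sigma)=w(\sigma)$ for all $g\in\gl{n,q}$ and all $\sigma\in\flagcomplex{n}{q}$.
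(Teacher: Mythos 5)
Your proof is correct and follows essentially the same route as the paper: construct adapted bases for both flags, define $g\in\gl{n,q}$ carrying one basis to the other (hence $gV_i=W_i$), and observe that $F\mapsto gF$ bijects complete flags through $\sigma$ with complete flags through $\sigma'$. The only cosmetic difference is that you factor out the $\gl{n,q}$-invariance of $w$ as a standalone observation before applying it, which is a nice way to package the argument but not a different method.
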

\begin{proof}
Let $v_1,\ldots,v_n$ be a basis of $\FF_q^n$ such that, for all $1\leq i\leq k$, the first $\dim(V_i)$ vectors in the basis form a basis for $V_i$. Similarly, let $w_1,\ldots,w_n$ be a basis of $\FF_q^n$ such that, for all $1\leq i\leq k$, the first $\dim(W_i)=\dim(V_i)$ vectors in the basis form a basis for $W_i$.

Let $g\in \gl{n,q}$ be the linear isomorphism that maps $v_i$ to $w_i$ for all $1\leq i\leq n$. Then, for all $1\leq i\leq k$, we have $gV_i=W_i$.

Let $\mathcal{F}$ be the set of complete flags extending $\sigma$, and let $\mathcal{F}'$ be the set of complete flags extending $\sigma'$. We have a map $\tilde{g}: \mathcal{F}\to \mathcal{F}'$ defined by $\tilde{g}(F)=gF$ for all $F\in\mathcal{F}$. Note that $\tilde{g}$ is a bijection: indeed, it is easy to check that $\widetilde{g^{-1}}:\mathcal{F}'\to\mathcal{F}$ is its inverse. Hence, $w(\sigma)=|\mathcal{F}|=|\mathcal{F}'|=w(\sigma')$.
\end{proof}

\begin{lemma}\label{lemma:weightsum}
Let $k$ be an integer. Let $V_1\subsetneq \cdots\subsetneq V_{i-1}\subsetneq V_{i+1}\subsetneq\cdots \subsetneq V_k$ be a flag in $\FF_q^n$, and let $\tau=\{V_1,\ldots,V_{i-1},V_{i+1},\ldots,V_k\}$. Let $\dim(V_{i-1})<d<\dim(V_{i+1})$.
Let $\mathcal{U}$ be the set of $d$-dimensional subspaces $U\subset \FF_q^n$ satisfying $V_{i-1}\subset U\subset V_{i+1}$. For $U\in\mathcal{U}$, let $\sigma_U= \{V_1,\ldots,V_{i-1},U,V_{i+1},\ldots,V_k\}$. Then,
\[
    w(\tau)= \sum_{U\in\mathcal{U}} w(\sigma_{U}).
\]
\end{lemma}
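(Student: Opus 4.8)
The plan is to show that the complete flags extending $\tau$ are partitioned according to which $d$-dimensional subspace they contain ``in the slot between $V_{i-1}$ and $V_{i+1}$''. First I would observe that any complete flag $F$ extending $\tau$ is a maximal chain of non-trivial subspaces, hence contains exactly one subspace of each dimension $1,\dots,n-1$; in particular it contains a unique $d$-dimensional subspace $U_F$. Since $F$ refines $\tau$, it contains $V_{i-1}$ and $V_{i+1}$ (here $V_0=\{0\}$ and $V_{k+1}=\FF_q^n$ when $i=1$ or $i=k$, so the statement still makes sense), and because $\dim(V_{i-1})<d<\dim(V_{i+1})$ and $F$ is a chain, the subspace $U_F$ must satisfy $V_{i-1}\subsetneq U_F\subsetneq V_{i+1}$. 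Thus $U_F\in\mathcal{U}$, and moreover $F$ extends $\sigma_{U_F}=\tau\cup\{U_F\}$.

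Next I would set up the counting. For each $U\in\mathcal U$, let $\mathcal F_U$ be the set of complete flags extending $\sigma_U$, so $|\mathcal F_U|=w(\sigma_U)$, and let $\mathcal F$ be the set of complete flags extending $\tau$, so $|\mathcal F|=w(\tau)$. The map $F\mapsto U_F$ just described sends $\mathcal F$ into $\bigcup_{U\in\mathcal U}\mathcal F_U$, and conversely every $F\in\mathcal F_U$ is a complete flag refining $\tau$ (since $\sigma_U\supset\tau$), hence lies in $\mathcal F$; so $\mathcal F=\bigcup_{U\in\mathcal U}\mathcal F_U$. This union is disjoint: if $F\in\mathcal F_U\cap\mathcal F_{U'}$ then $F$ contains both $U$ and $U'$, which are $d$-dimensional members of the chain $F$, forcing $U=U'$ by uniqueness of the dimension-$d$ element of a complete flag. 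Summing cardinalities over the disjoint union gives $w(\tau)=|\mathcal F|=\sum_{U\in\mathcal U}|\mathcal F_U|=\sum_{U\in\mathcal U}w(\sigma_U)$, as claimed.

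There is essentially no hard step here; this is a straightforward bijective/partition argument. The only point requiring a little care is the degenerate boundary behavior when $i=1$ or $i=k$, where $V_{i-1}$ or $V_{i+1}$ is a trivial subspace not actually appearing as a vertex of $\flagcomplex{n}{q}$: one must phrase the condition $V_{i-1}\subset U\subset V_{i+1}$ using the conventions $V_0=\{0\}$, $V_{k+1}=\FF_q^n$, and check that a complete flag, which by definition consists of the non-trivial subspaces, automatically ``contains'' these trivial endpoints so that the chain condition pinning down $U_F$ still applies. With that convention in place the argument goes through verbatim.
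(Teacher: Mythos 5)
Your proof is correct and follows essentially the same partition argument as the paper: every complete flag extending $\tau$ extends exactly one $\sigma_U$ (determined by its unique $d$-dimensional member), giving $w(\tau)=\sum_{U\in\mathcal{U}}w(\sigma_U)$. You supply more detail than the paper (disjointness, the boundary conventions $V_0=\{0\}$, $V_{k+1}=\FF_q^n$), but the underlying idea is identical.
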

\begin{proof}
Notice that, for all  $U\in\mathcal{U}$, any complete flag extending $\sigma_U$ extends also $\tau$. Moreover, each complete flag extending $\tau$ extends exactly one of the flags $\sigma_U$. Therefore, we have
\[
    w(\tau)= \sum_{U\in\mathcal{U}} w(\sigma_U)
\]
as wanted.
\end{proof}

Let $V_1\subsetneq \cdots \subsetneq V_k$ be a flag in $\FF_q^n$. Let $\sigma=\{V_1,\ldots,V_k\}$ and $\eta=\sigma\setminus\{V_i\}$. Recall that we defined $r(\sigma,\eta)=\dim(V_{i+1})-\dim(V_{i-1})$ and $t(\sigma,\eta)=\dim(V_{i})-\dim(V_{i-1})$ (where $V_0=\{0\}$ and $V_{k+1}=\FF_q^n$).

\begin{lemma}\label{lemma:weights1}
Let $V_1\subsetneq \cdots \subsetneq V_k$ be a flag in $\FF_q^n$. Let $\sigma=\{V_1,\ldots,V_k\}$ and $\eta=\sigma\setminus\{V_i\}$. Then
\[
    \frac{w(\sigma)}{w(\eta)}=\binom{r(\sigma,\eta)}{t(\sigma,\eta)}_q^{-1}.
\]

\end{lemma}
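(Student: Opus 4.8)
The plan is to compute $w(\sigma)/w(\eta)$ by a counting argument, reducing everything to the special case of a single subspace in $\FF_q^m$ via Lemma \ref{lemma:weight_function_depends_on_dim}. First I would note that, by Lemma \ref{lemma:weight_function_depends_on_dim}, the ratio $w(\sigma)/w(\eta)$ depends only on the dimensions $\dim(V_0)<\dim(V_1)<\cdots<\dim(V_{k+1})$, and in fact only on the three numbers $\dim(V_{i-1})$, $\dim(V_i)$, $\dim(V_{i+1})$, since the parts of the flag below $V_{i-1}$ and above $V_{i+1}$ are ``the same'' in $\sigma$ and $\eta$ and contribute identically. Concretely, every complete flag extending $\eta$ restricts, between $V_{i-1}$ and $V_{i+1}$, to a complete flag of the quotient $V_{i+1}/V_{i-1}\cong \FF_q^{r}$ with $r=r(\sigma,\eta)$, and the choices made below $V_{i-1}$ and above $V_{i+1}$ are independent of this restriction; the same is true for $\sigma$, with the extra constraint that the $t(\sigma,\eta)$-dimensional step of that intermediate flag must coincide with $V_i/V_{i-1}$.

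Next I would make this precise. Let $r=r(\sigma,\eta)$ and $t=t(\sigma,\eta)$. Fix a complete flag $G$ of $\FF_q^n$ extending $\eta$ outside the interval $(V_{i-1},V_{i+1})$, i.e. fix all subspaces of dimension $\le \dim(V_{i-1})$ or $\ge \dim(V_{i+1})$; call the number of such partial choices $N$. For each such choice, the number of ways to complete it to a full flag extending $\eta$ is the number of complete flags of $V_{i+1}/V_{i-1}\cong\FF_q^{r}$, and the number of ways to complete it to a full flag extending $\sigma$ is the number of complete flags of $\FF_q^r$ whose $t$-dimensional subspace equals the fixed line $V_i/V_{i-1}$. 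Hence $w(\eta)=N\cdot\#\{\text{complete flags of }\FF_q^r\}$ and $w(\sigma)=N\cdot\#\{\text{complete flags of }\FF_q^r\text{ through a fixed }t\text{-subspace}\}$, so the $N$ cancels and
\[
\frac{w(\sigma)}{w(\eta)}=\frac{\#\{\text{complete flags of }\FF_q^r\text{ through a fixed }t\text{-dimensional subspace}\}}{\#\{\text{complete flags of }\FF_q^r\}}.
\]
The denominator is the number of complete flags of $\FF_q^r$, and the numerator, since a complete flag through a fixed $t$-subspace $W$ is a complete flag of $W$ together with a complete flag of $\FF_q^r/W\cong\FF_q^{r-t}$, equals $\#\{\text{complete flags of }\FF_q^t\}\cdot\#\{\text{complete flags of }\FF_q^{r-t}\}$. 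Therefore the ratio is exactly the reciprocal of the number of $t$-dimensional subspaces of $\FF_q^r$, which is $\binom{r}{t}_q^{-1}$, as claimed. (Alternatively one can avoid the global argument and instead apply Lemma \ref{lemma:weightsum} directly: summing $w(\sigma_U)$ over all $d$-dimensional $U$ with $V_{i-1}\subset U\subset V_{i+1}$ gives $w(\eta)$, and by Lemma \ref{lemma:weight_function_depends_on_dim} all $\binom{r}{t}_q$ summands are equal to $w(\sigma)$, giving $w(\eta)=\binom{r}{t}_q w(\sigma)$ immediately.)

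In fact this second route via Lemma \ref{lemma:weightsum} is the cleanest, so I would present the proof that way: take $\tau=\eta$, $d=\dim(V_i)$, and observe that $\mathcal{U}$ — the set of $d$-dimensional subspaces between $V_{i-1}$ and $V_{i+1}$ — has size $\binom{r(\sigma,\eta)}{t(\sigma,\eta)}_q$ by Lemma \ref{lemma:countsubspaces} applied to the quotient $V_{i+1}/V_{i-1}$, and that $w(\sigma_U)=w(\sigma)$ for every $U\in\mathcal{U}$ by Lemma \ref{lemma:weight_function_depends_on_dim} (all the $\sigma_U$ have the same dimension sequence as $\sigma$). Then Lemma \ref{lemma:weightsum} gives $w(\eta)=\sum_{U\in\mathcal{U}}w(\sigma_U)=|\mathcal{U}|\,w(\sigma)=\binom{r(\sigma,\eta)}{t(\sigma,\eta)}_q w(\sigma)$, and dividing yields the result. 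There is no real obstacle here; the only points requiring a line of care are checking that $\mathcal{U}$ is in bijection with the $t$-dimensional subspaces of the $r$-dimensional quotient $V_{i+1}/V_{i-1}$ (immediate from the correspondence theorem), and handling the boundary conventions $V_0=\{0\}$, $V_{k+1}=\FF_q^n$ uniformly when $i=1$ or $i=k$ — both of which are already baked into the statements of Lemmas \ref{lemma:weightsum} and \ref{lemma:weight_function_depends_on_dim}.
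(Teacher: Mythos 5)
Your second, preferred route — apply Lemma \ref{lemma:weightsum} with $\tau=\eta$, observe that $|\mathcal{U}|=\binom{r}{t}_q$ by Lemma \ref{lemma:countsubspaces} and that all $w(\sigma_U)$ equal $w(\sigma)$ by Lemma \ref{lemma:weight_function_depends_on_dim} — is exactly the paper's proof. The preliminary ``global'' flag-counting argument you sketch first is a correct alternative phrasing of the same idea, but the paper goes straight to the clean version you end with.
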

\begin{proof}

Let $r=r(\sigma,\eta)$ and $t=t(\sigma,\eta)$.
Let $\mathcal{U}$ be the family of subspaces $U\subset \FF_q^n$ satisfying $V_{i-1}\subset U \subset V_{i+1}$ and $\dim(U)=\dim(V_i)$. By Lemma \ref{lemma:countsubspaces}, we have $|\mathcal{U}|=\binom{r}{t}_q$.

For every $U\in \mathcal{U}$, let $\sigma_U=\{V_1,\ldots,V_{i-1},U,V_{i+1},\ldots,V_k\}$. Then, we have $\sigma=\sigma_{V_i}$, and, by Lemma \ref{lemma:weight_function_depends_on_dim}, $w(\sigma_U)=w(\sigma)$ for all $U\in\mathcal{U}$.

Therefore, by Lemma \ref{lemma:weightsum}, we have
\[
    w(\eta)= \sum_{U\in\mathcal{U}} w(\sigma_U)= |\mathcal{U}|\cdot w(\sigma)= \binom{r}{t}_q \cdot w(\sigma).
\]
We obtain $\frac{w(\sigma)}{w(\eta)}=\binom{r}{t}_q^{-1}$, as wanted.
\end{proof}

\begin{lemma}\label{lemma:diagonal}
Let $\sigma\in\flagcomplex{n}{q}(k)$. Then
\[
    \lap{k}{n}{q}_{\sigma,\sigma}=n-k-2.
\]
\end{lemma}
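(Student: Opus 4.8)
The plan is to read off the diagonal entry from Lemma~\ref{lemma:laplacian_matrix_rep} and then evaluate the resulting sum over the link of $\sigma$ by grouping its vertices according to where a new subspace can be slotted into the flag. Write $\sigma=\{V_1,\ldots,V_{k+1}\}$ with $V_1\subsetneq\cdots\subsetneq V_{k+1}$, and adopt the convention $V_0=\{0\}$, $V_{k+2}=\FF_q^n$. Lemma~\ref{lemma:laplacian_matrix_rep} gives
\[
\lap{k}{n}{q}_{\sigma,\sigma}=\sum_{U\in\lk(\flagcomplex{n}{q},\sigma)}\frac{w(\sigma\cup\{U\})}{w(\sigma)} .
\]
A non-trivial subspace $U$ belongs to $\lk(\flagcomplex{n}{q},\sigma)$ precisely when $\sigma\cup\{U\}$ is again a flag, i.e.\ precisely when there is a unique index $0\le j\le k+1$ with $V_j\subsetneq U\subsetneq V_{j+1}$, in which case $\dim V_j<\dim U<\dim V_{j+1}$.

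Accordingly, I would split the sum first by the ``gap'' $j$ and then, within gap $j$, by the value $d=\dim U$, which runs over the integers strictly between $\dim V_j$ and $\dim V_{j+1}$. For a fixed gap $j$ and a fixed admissible $d$, Lemma~\ref{lemma:weightsum}, applied to the flag $\sigma$ with the roles of $V_{i-1},V_{i+1}$ played by $V_j,V_{j+1}$, yields $w(\sigma)=\sum_U w(\sigma\cup\{U\})$, where the sum is over all $U$ with $V_j\subsetneq U\subsetneq V_{j+1}$ and $\dim U=d$. (The proof of Lemma~\ref{lemma:weightsum} goes through verbatim when $V_j=\{0\}$ or $V_{j+1}=\FF_q^n$, which is what is needed for the two boundary gaps; alternatively one may combine Lemma~\ref{lemma:weights1} with Lemma~\ref{lemma:countsubspaces}, since all the $U$ of a fixed dimension in a fixed gap have the same values of $r(\cdot,\cdot)$ and $t(\cdot,\cdot)$.) Hence each (gap, dimension) pair contributes exactly $1$ to the sum above.

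It remains to count the pairs. Gap $j$ has exactly $\dim V_{j+1}-\dim V_j-1$ admissible dimensions, so
\[
\lap{k}{n}{q}_{\sigma,\sigma}=\sum_{j=0}^{k+1}\bigl(\dim V_{j+1}-\dim V_j-1\bigr)=\bigl(\dim V_{k+2}-\dim V_0\bigr)-(k+2)=n-k-2
\]
by telescoping, which is the claim. I do not anticipate a genuine obstacle here: the only points requiring a little care are the two boundary gaps (insertions below $V_1$ and above $V_{k+1}$), which are handled by the conventions above, and the fact that a gap of width $1$ contributes nothing, which is automatically consistent with the formula.
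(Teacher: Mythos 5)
Your proof is correct and follows essentially the same route as the paper's: read off the diagonal entry from Lemma~\ref{lemma:laplacian_matrix_rep}, apply Lemma~\ref{lemma:weightsum} to see that each admissible dimension contributes exactly $1$, and count. The only cosmetic difference is that you group first by gap and then by dimension and finish with a telescoping sum, whereas the paper sums directly over $d\in[n-1]\setminus\{d_1,\ldots,d_{k+1}\}$ and counts that set; these are the same count and the same argument.
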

\begin{proof}

Let $\sigma=\{V_1,\ldots,V_{k+1}\}$, and let $d_i=\dim(V_i)$ for all $1\leq i\leq k+1$.
By Lemma \ref{lemma:laplacian_matrix_rep}, we have
\begin{align*}
\lap{k}{n}{q}_{\sigma,\sigma}&=\frac{1}{w(\sigma)} \sum_{U\in \lk(\flagcomplex{n}{q},\sigma)} w(\sigma\cup\{U\})
\\
&=\frac{1}{w(\sigma)} \sum_{\substack{d\in[n-1],\\d\notin\{d_1,\ldots,d_{k+1}\}}} \sum_{\substack{U\in \lk(\flagcomplex{n}{q},\sigma)\\
\dim(U)=d}} w(\sigma\cup\{U\}).
\end{align*}
By Lemma \ref{lemma:weightsum}, we have for all $d\in[n-1]\setminus \{d_1,\ldots,d_{k+1}\}$:
\[
\sum_{\substack{U\in \lk(\flagcomplex{n}{q},\sigma)\\
\dim(U)=d}} w(\sigma\cup\{U\})= w(\sigma).
\]
So, we obtain
\begin{align*}
\lap{k}{n}{q}_{\sigma,\sigma}&=
\frac{1}{w(\sigma)} \sum_{\substack{d\in[n-1],\\ d\notin\{d_1,\ldots,d_{k+1}\}}} \sum_{\substack{U\in \lk(\flagcomplex{n}{q},\sigma)\\
\dim(U)=d}} w(\sigma\cup\{U\})
\\
&= \frac{1}{w(\sigma)} \sum_{\substack{d\in[n-1],\\ d\notin\{d_1,\ldots,d_{k+1}\}}} w(\sigma) = n-k-2.
\end{align*}
\end{proof}

\begin{remark}
    An explicit formula for the weight function $w(\sigma)$ is given in \cite[Lemma 5.4]{papikian2016garland}, from which it is possible to recover Lemma \ref{lemma:weights1} and Lemma \ref{lemma:diagonal}.
\end{remark}

\begin{proof}[Proof of Lemma \ref{lemma:laplacian_formula}]

For $k\geq0$, we will consider each flag in $\flagcomplex{n}{q}(k)$ as an ordered simplex, ordered by increasing dimension. That is, for $\sigma=\{V_1,\ldots,V_k\}\in \flagcomplex{n}{q}(k)$, where $V_1\subset \cdots \subset V_{k+1}$, we  identify $\sigma$ with the ordered simplex $[V_1,\ldots,V_{k+1}]$.
For any $1\leq i\leq k+1$, let $\sigma_i\in\flagcomplex{n}{q}(k-1)$ be obtained from $\sigma$ by removing $V_i$. Then $(\sigma:\sigma_i)$ is $(-1)^{i-1}$. So, for $1\leq i<j\leq k+1$, we have $(\sigma:\sigma_i)(\sigma:\sigma_j)=(-1)^{i-1}(-1)^{j-1}=(-1)^{j-i}=(-1)^{\epsilon(\sigma_i,\sigma_j)}$. So, by Lemma \ref{lemma:laplacian_matrix_rep}, Lemma \ref{lemma:weights1} and Lemma \ref{lemma:diagonal}, we obtain
    \[
    \lap{k}{n}{q}_{\sigma,\tau}=\begin{cases}
                    n-k-2 & \text{if } \sigma=\tau,\\
                    (-1)^{\epsilon(\sigma,\tau)+1}\binom{r(\sigma\cup\tau,\sigma)}{t(\sigma\cup\tau,\sigma)}_q^{-1} & \text{if } |\sigma\cap \tau|=k,\, \sigma\cup\tau\in\flagcomplex{n}{q},\\
                    0 & \text{otherwise,}
    \end{cases}
    \]
    for all $\sigma,\tau\in \flagcomplex{n}{q}(k)$.
\end{proof}

\begin{proof}[Proof of Corollary \ref{cor:laplacian_formula_0_dim}]
Let $U, V$ be two non-trivial subspaces of $\FF_q^n$. Assume $U\neq V$. Note that $\{U,V\}\in\flagcomplex{n}{q}$ if and only of $U\subsetneq V$ or $V\subsetneq U$, and in this case we have $\epsilon(\{U\},\{V\})=0$. Assume $\dim(U)=i$ and $\dim(V)=j$. Then, if $U\subsetneq V$, we have $r(\{U,V\},\{U\})=n-i$ and $t(\{U,V\},\{U\})=j-i$, and if $V\subsetneq U$ we have $r(\{U,V\},\{U\})=i$ and $t(\{U,V\},\{U\})=j$. Therefore, by Lemma \ref{lemma:laplacian_formula}, we obtain
   \[
    \lap{0}{n}{q}_{U,V}=\begin{cases}
                    n-2 & \text{if } U=V,\\
                    - \binom{n-i}{j-i}_q^{-1} & \text{if } U\subsetneq V,\\
                    - \binom{i}{j}_q^{-1} & \text{if } V\subsetneq U,\\
                    0 & \text{otherwise.}
    \end{cases}
    \]
\end{proof}

\section{Subspace inclusion matrices}\label{sec:lap_inclusion_matrices}

For $0\leq i\leq n$, denote by $S(i)$ the collection of all subspaces of $\mathbb{F}_q^n$ of dimension $i$.

Let $1\leq i\leq n-1$ and $U\in S(i)$. Define the cochain $1_U\in C^0(\flagcomplex{n}{q})$ as
\[
   1_U(V)=\begin{cases}
            1 & \text{ if } U=V,\\
            0 & \text{ otherwise.}
            \end{cases}
\]
Recall that $\cup_{i=1}^{n-1}\{1_U:\, U\in S(i)\}$ forms a basis for $C^{0}(\flagcomplex{n}{q})$.

Let $0\leq i,j\leq n$. Let $A_{i j}$ be the $S(i)\times S(j)$ matrix
\[
    (A_{ij})_{U,V}=\begin{cases}
    1 & \text{ if } U\subset V \text{ or } V\subset U,\\
    0 & \text{ otherwise.}
    \end{cases}
\]
Note that $A_{ij}=A_{ji}^t$, and that $A_{ii}$ is just the $S(i)\times S(i)$ identity matrix $I$. Also, for all $0\leq j\leq n$, $A_{0j}\in \mathbb{R}^{1\times S(j)}=\mathbb{R}^{S(j)}$ is the all-ones vector.
Using the matrices $A_{i j}$, we can give the following description for $\lap{0}{n}{q}$, which follows immediately from Corollary \ref{cor:laplacian_formula_0_dim}:

\begin{proposition}\label{lemma:matrix_form_L0}
We can write $\lap{0}{n}{q}$ as an $(n-1)\times (n-1)$ block matrix
\[
    \lap{0}{n}{q}= \left( \begin{array}{c c c}
    L_{1,1} & \cdots & L_{1,n-1}\\
    \vdots & & \vdots \\ 
    L_{n-1,1} & \cdots & L_{n-1,n-1}
    \end{array}\right),
\]
where, for $1\leq i,j\leq n-1$, $L_{i j}$ is the $S(i)\times S(j)$ matrix
\[
    L_{ij}=\begin{cases}
    (n-2) I & \text{ if } i=j,\\
    -\binom{n-i}{j-i}_q^{-1} A_{ij} & \text{ if } i<j,\\
    -\binom{i}{j}_q^{-1} A_{ij} & \text{ if } i>j.
    \end{cases}
\]
\end{proposition}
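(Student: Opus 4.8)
The plan is simply to unpack the definitions: the asserted block structure is nothing more than a regrouping of the rows and columns of $\lap{0}{n}{q}$ according to the dimension of the corresponding subspace, and then each block is read off from Corollary~\ref{cor:laplacian_formula_0_dim}.

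First I would fix the basis $\bigcup_{i=1}^{n-1}\{1_U:\, U\in S(i)\}$ of $C^0(\flagcomplex{n}{q})$ and observe that, since every non-trivial subspace of $\FF_q^n$ has a well-defined dimension lying in $\{1,\dots,n-1\}$, this basis partitions into exactly the $n-1$ groups indexed by $S(1),\dots,S(n-1)$. Writing $\lap{0}{n}{q}$ as a matrix in this basis and grouping rows and columns accordingly yields an $(n-1)\times(n-1)$ array of blocks $L_{ij}$, where $L_{ij}$ is the $S(i)\times S(j)$ submatrix determined by $(L_{ij})_{U,V}=\lap{0}{n}{q}_{U,V}$ for $U\in S(i)$ and $V\in S(j)$.

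Then I would split into the three cases of Corollary~\ref{cor:laplacian_formula_0_dim}. If $i=j$, then distinct subspaces $U\neq V$ of equal dimension satisfy neither $U\subsetneq V$ nor $V\subsetneq U$, so the off-diagonal entries of $L_{ii}$ vanish and $(L_{ii})_{U,V}=n-2$ exactly when $U=V$; hence $L_{ii}=(n-2)I$. If $i<j$, then $V\subsetneq U$ is impossible for dimension reasons, while $U\subsetneq V$ is equivalent to $U\subset V$; so $(L_{ij})_{U,V}$ equals $-\binom{n-i}{j-i}_q^{-1}$ when $U\subset V$ and $0$ otherwise, which is precisely the $(U,V)$ entry of $-\binom{n-i}{j-i}_q^{-1}A_{ij}$, since in this regime $(A_{ij})_{U,V}=1$ iff $U\subset V$. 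The case $i>j$ is symmetric: here $(A_{ij})_{U,V}=1$ iff $V\subset U$, and $\lap{0}{n}{q}_{U,V}=-\binom{i}{j}_q^{-1}$ exactly when $V\subsetneq U$, giving $L_{ij}=-\binom{i}{j}_q^{-1}A_{ij}$.

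There is essentially no obstacle here — the statement is an immediate reformulation. The only points requiring a moment of care are bookkeeping ones: checking that the index range $\{1,\dots,n-1\}$ for the blocks really does exhaust all non-trivial dimensions, and noting that once two subspaces have fixed, distinct dimensions the relation ``$U\subset V$ or $V\subset U$'' defining $A_{ij}$ collapses to the single relevant inclusion (and collapses to equality when the dimensions coincide). With these observations in place, the proposition follows directly from Corollary~\ref{cor:laplacian_formula_0_dim}.
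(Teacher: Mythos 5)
Your proof is correct and matches the paper's intent exactly: the paper states that Proposition~\ref{lemma:matrix_form_L0} ``follows immediately from Corollary~\ref{cor:laplacian_formula_0_dim},'' and your argument is precisely that direct unpacking, grouping the standard basis by subspace dimension and reading off each block from the three cases of the corollary.
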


We will need the following results regarding products of subspace inclusion matrices. First, we will need the following lemma of Kantor (see also \cite{gottlieb1966certain,graver1973module,wilson1973necessary} for analogous results in the case $q=1$, that is, for ``subset inclusion" matrices).

\begin{lemma}[Kantor \cite{kantor1972incidence}]\label{lemma:product1}
Let $k\leq j\leq i$. Then
\[
    A_{ij}A_{jk} = \binom{i-k}{j-k}_q A_{ik}.
\]
\end{lemma}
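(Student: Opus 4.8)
\textbf{Proof plan for Lemma \ref{lemma:product1}.}

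The plan is to compute the $(U,W)$ entry of $A_{ij}A_{jk}$ directly from the definition of matrix multiplication and then recognize the count. For $U\in S(i)$ and $W\in S(k)$, we have
\[
    (A_{ij}A_{jk})_{U,W}=\sum_{V\in S(j)} (A_{ij})_{U,V}(A_{jk})_{V,W}.
\]
Since $k\leq j\leq i$, the relations ``$U\subset V$ or $V\subset U$'' and ``$V\subset W$ or $W\subset V$'' simplify by dimension count to ``$V\subset U$'' and ``$W\subset V$'' respectively. Hence the summand is $1$ precisely when $W\subset V\subset U$, and $0$ otherwise, so $(A_{ij}A_{jk})_{U,W}$ equals the number of $j$-dimensional subspaces $V$ with $W\subset V\subset U$.

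The next step is to evaluate this count. If $W\not\subset U$, there is no such $V$, which matches $(A_{ik})_{U,W}=0$. If $W\subset U$, then the $j$-dimensional subspaces $V$ with $W\subset V\subset U$ correspond to the $(j-k)$-dimensional subspaces of the $(i-k)$-dimensional quotient space $U/W$; by Lemma \ref{lemma:countsubspaces} there are exactly $\binom{i-k}{j-k}_q$ of them, independently of the particular pair $(U,W)$ with $W\subset U$. In this case $(A_{ik})_{U,W}=1$. Combining the two cases gives $(A_{ij}A_{jk})_{U,W}=\binom{i-k}{j-k}_q (A_{ik})_{U,W}$ for all $U\in S(i)$, $W\in S(k)$, which is the claimed identity.

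There is no real obstacle here: the argument is a routine double-counting once the dimension inequalities are used to collapse the two-sided containment conditions in the definition of $A_{ij}$ to one-sided ones. The only point requiring a moment's care is checking the degenerate cases (e.g. $k=j$ or $j=i$, where one of the matrices is the identity), but these are consistent with the formula since $\binom{m}{0}_q=\binom{m}{m}_q=1$.
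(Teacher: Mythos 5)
Your proof is correct and takes essentially the same approach as the paper: both reduce to counting $j$-dimensional subspaces $V$ with $W\subset V\subset U$ and invoke Lemma \ref{lemma:countsubspaces} applied to the quotient $U/W$. The only difference is presentational — you work entrywise with $(A_{ij}A_{jk})_{U,W}$ while the paper tracks the action of the product on a basis vector $1_U$ — but the underlying computation is identical.
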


\begin{proof}
Let $U\in S(k)$. Then,
\[
    A_{i j} A_{j k} 1_U
= A_{i j} \left( \sum_{\substack{V\in S(j):\\ U\subset V}} 1_V\right)   
    = \sum_{\substack{V\in S(j):\\ U\subset V}} A_{i j} 1_V
    = \sum_{\substack{V\in S(j): \\ U\subset V}} \sum_{\substack{W\in S(i): \\ V\subset W}}1_W.
\]
Let $W\in S(i)$ such that $U\subset W$. By Lemma \ref{lemma:countsubspaces}, the number of $j$-dimensional subspaces of $W$ containing $U$ is $\binom{i-k}{j-k}_q$. So, we obtain
\[
   A_{i j} A_{j k} 1_U = \sum_{\substack{W\in S(i): \\ U\subset W}} \binom{i-k}{j-k}_q 1_W
    =\binom{i-k}{j-k}_q A_{i k} 1_U.
\]
Thus, $A_{ij}A_{jk} = \binom{i-k}{j-k}_q A_{ik}$.
\end{proof}

\begin{lemma}\label{lemma:product2_preparation}
Let $0\leq k\leq i$, and let $U\in S(k)$. Then, for every sequence $\{\alpha_r\}_{r=0}^k$, we can write
\begin{equation}\label{eq:subspacesum}
\sum_{r=0}^k \alpha_r \sum_{\substack{V\in S(r):\\ V\subset U}} \sum_{\substack{W\in S(i):\\  V\subset W}} 1_W =
\sum_{m=0}^k \sum_{\substack{W\in S(i):\\ \dim(U\cap W)=m}} \beta_m 1_W,
\end{equation}
where
\[
    \beta_m=\sum_{r=0}^{m} \binom{m}{r}_q \alpha_r
\]
for all $0\leq m\leq k$.
\end{lemma}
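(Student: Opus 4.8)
The plan is to prove \eqref{eq:subspacesum} by comparing the coefficient of each basis vector $1_W$, $W\in S(i)$, on the two sides. On the left-hand side, $1_W$ is produced once, with coefficient $\alpha_r$, for every pair $(r,V)$ with $0\le r\le k$, $V\in S(r)$, $V\subset U$ and $V\subset W$. Since the two conditions $V\subset U$ and $V\subset W$ are together equivalent to $V\subset U\cap W$, the coefficient of $1_W$ on the left is
\[
\sum_{r=0}^{k}\alpha_r\cdot\#\{V\in S(r):V\subset U\cap W\}.
\]

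Next I would set $m=\dim(U\cap W)$. Because $U\cap W\subseteq U$ and $\dim U=k$, we always have $0\le m\le k$, so every $W\in S(i)$ lies in exactly one of the classes indexed by $m$ on the right-hand side. By the definition of the $q$-binomial coefficient (Section \ref{sec:bin_prelims}), the number of $r$-dimensional subspaces of the $m$-dimensional space $U\cap W$ is $\binom{m}{r}_q$, which vanishes whenever $r>m$. Hence the coefficient of $1_W$ on the left equals
\[
\sum_{r=0}^{k}\alpha_r\binom{m}{r}_q=\sum_{r=0}^{m}\alpha_r\binom{m}{r}_q=\beta_m .
\]

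Finally, grouping the vectors $W\in S(i)$ according to the value $m=\dim(U\cap W)\in\{0,1,\dots,k\}$ yields
\[
\sum_{r=0}^k\alpha_r\sum_{\substack{V\in S(r):\\ V\subset U}}\sum_{\substack{W\in S(i):\\ V\subset W}}1_W=\sum_{W\in S(i)}\beta_{\dim(U\cap W)}1_W=\sum_{m=0}^{k}\sum_{\substack{W\in S(i):\\ \dim(U\cap W)=m}}\beta_m 1_W,
\]
which is precisely \eqref{eq:subspacesum}. There is no genuine obstacle here: the proof is just an interchange of the order of summation followed by a subspace count. The only points that need a little care are verifying that $m$ never exceeds $k$ (so that no $W$ is left unaccounted for on the right-hand side) and using the convention $\binom{m}{r}_q=0$ for $r>m$ to truncate the inner sum over $r$ down to $r\le m$.
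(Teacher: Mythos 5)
Your proof is correct and follows essentially the same approach as the paper: compare the coefficient of each $1_W$ on both sides, observe that $V\subset U$ and $V\subset W$ together mean $V\subset U\cap W$, count the $r$-dimensional subspaces of $U\cap W$ as $\binom{m}{r}_q$ where $m=\dim(U\cap W)$, and conclude the coefficient equals $\beta_m$. The only difference is presentational — you spell out the interchange of summation and the verification $m\le k$ a bit more explicitly.
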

\begin{proof}
Let $0\leq m\leq k$, and let $W\in S(i)$ such that $\dim(U\cap W)=m$. The coefficient of $1_W$ on the right-hand side of Equation \eqref{eq:subspacesum} is $\beta_m$. 

Let $0\leq r\leq k$. The number of $r$-dimensional subspaces of $U\cap W$ is $\binom{m}{r}_q$ if $r\leq m$ and $0$ otherwise. Therefore, the coefficient of $1_W$ on the left-hand side of Equation \eqref{eq:subspacesum} is
\[
    \sum_{r=0}^m \binom{m}{r}_q \alpha_r.
\]
We obtain
\[
    \beta_m=\sum_{r=0}^{m} \binom{m}{r}_q \alpha_r
\]
for all $0\leq m\leq k$, as wanted.
\end{proof}

\begin{lemma}\label{lemma:product2}
Let $0\leq k\leq i\leq j\leq n$. Then,
\[
    A_{i j}A_{j k}=
    \sum_{m=0}^k c_{i j k m} A_{i m} A_{m k},
\]
where, for all $0\leq m\leq k$, 
\begin{equation}\label{eq:coeffs1}
    c_{i j k m}=\sum_{r=0}^m (-1)^{m-r} q^{\binom{r+1}{2}+\binom{m}{2}-rm} \binom{m}{r}_q \binom{n-i-k+r}{j-i-k+r}_q.
\end{equation}
\end{lemma}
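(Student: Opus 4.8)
The plan is to compute the matrix $A_{ij}A_{jk}$ acting on an arbitrary basis vector $1_U$ with $U \in S(k)$, and to recognize the result as a linear combination of the operators $A_{im}A_{mk}$ for $0 \le m \le k$, reading off the coefficients. First I would unwind the product: expanding as in the proof of Lemma~\ref{lemma:product1},
\[
A_{ij}A_{jk}1_U = \sum_{\substack{T\in S(j):\\ U\subset T}}\ \sum_{\substack{W\in S(i):\\ W\subset T}} 1_W,
\]
and the coefficient of $1_W$ is the number of $j$-dimensional subspaces $T$ with $U + W \subseteq T$; by Lemma~\ref{lemma:countsubspaces} this depends only on $m := \dim(U\cap W)$, namely it equals $\binom{n-(i+k-m)}{j-(i+k-m)}_q$ (since $\dim(U+W)=i+k-m$). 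So $A_{ij}A_{jk}1_U = \sum_{m=0}^{k}\sum_{\dim(U\cap W)=m}\binom{n-i-k+m}{j-i-k+m}_q 1_W$; call this coefficient $a_m$.

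Next I would expand the target operators $A_{im}A_{mk}$ in the same way. We have $A_{mk}1_U = \sum_{V\in S(m),\, V\subset U} 1_V$, and then $A_{im}1_V = \sum_{W\in S(i),\, V\subset W} 1_W$, so
\[
A_{im}A_{mk}1_U = \sum_{\substack{V\in S(m):\\ V\subset U}}\ \sum_{\substack{W\in S(i):\\ V\subset W}} 1_W,
\]
which is exactly the $r=m$ term of the left-hand side of Lemma~\ref{lemma:product2_preparation} (with $\alpha_m = 1$, all other $\alpha_r = 0$). Thus, if I seek $A_{ij}A_{jk} = \sum_{m=0}^k c_{ijkm} A_{im}A_{mk}$ on the span of the $1_U$ with $U\in S(k)$ (which suffices, since these span $\mathbb{R}^{S(k)}$), I need the sequence $\{c_{ijkm}\}$ so that, applying Lemma~\ref{lemma:product2_preparation} with $\alpha_r = c_{ijkr}$, the resulting $\beta_m = \sum_{r=0}^m \binom{m}{r}_q c_{ijkr}$ equals $a_m = \binom{n-i-k+m}{j-i-k+m}_q$ for every $0\le m\le k$.

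This is precisely a $q$-binomial triangular system, and I would invert it using Carlitz's formula (Lemma~\ref{lemma:carlitz}): from $a_m = \sum_{r=0}^m \binom{m}{r}_q c_{ijkr}$ we get $c_{ijkm} = \sum_{r=0}^m (-1)^{m-r} q^{\binom{r+1}{2}+\binom{m}{2}-rm}\binom{m}{r}_q a_r = \sum_{r=0}^m (-1)^{m-r} q^{\binom{r+1}{2}+\binom{m}{2}-rm}\binom{m}{r}_q \binom{n-i-k+r}{j-i-k+r}_q$, which is exactly \eqref{eq:coeffs1}. The only genuine care needed is bookkeeping of dimensions: I should check $\dim(U+W) = i + k - m$ holds in $\mathbb{F}_q^n$ (it does, since $U+W$ has dimension $i+k-m$ regardless of the ambient space, as long as $i+k-m\le n$, which follows because such a $W$ exists only then), and that the hypothesis $k\le i\le j\le n$ guarantees all the $q$-binomials are the intended counting coefficients and that the upper index $n - i - k + r$ stays $\ge$ the lower index for the terms that matter. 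I expect the main (minor) obstacle to be this indexing verification and confirming that matching the $\beta_m$'s on the spanning set $\{1_U : U\in S(k)\}$ indeed forces the operator identity; there is no deep difficulty, as the whole argument is the composition of the expansion in Lemma~\ref{lemma:product2_preparation} with the Carlitz inversion.
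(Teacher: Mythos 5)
Your proof is correct and follows essentially the same path as the paper: expand $A_{ij}A_{jk}1_U$ over $U\in S(k)$, group terms by $m=\dim(U\cap W)$ to get the coefficients $\binom{n-i-k+m}{j-i-k+m}_q$, translate into the form of Lemma~\ref{lemma:product2_preparation}, and invert the resulting triangular $q$-binomial system via Carlitz's formula (Lemma~\ref{lemma:carlitz}). Nothing to add.
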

\begin{proof}

 Let $U\in S(k)$. Then, we can write
\[
    A_{i j} A_{j k} 1_U = \sum_{\substack{V\in S(j): \\ U\subset V}} \sum_{\substack{W\in S(i): \\ W\subset V}} 1_W.
\]
Let $0\leq m\leq k$ and let $W\in S(i)$ such that $\dim(U\cap W)=m$. Then, by Lemma \ref{lemma:countsubspaces}, since $\dim(U+W)=\dim(U)+\dim(W)-\dim(U\cap W)= k+i-m$, the number of $j$-dimensional subspaces of $\FF_q^n$ containing both $U$ and $W$ is
\[
    \binom{n-(k+i-m)}{j-(k+i-m)}_q = \binom{n-i-k+m}{j-i-k+m}_q.
\]  
Therefore,
\[
    A_{i j} A_{j k} 1_U =\sum_{m=0}^k \sum_{\substack{W\in S(i):\\ \dim(U\cap W)=m}} \binom{n-i-k+m}{j-i-k+m}_q 1_W.
\]
Using Lemma \ref{lemma:product2_preparation} and the fact that
$\sum_{\substack{V\in S(m):\\  V\subset U}} 
  \sum_{\substack{W\in S(i):\\ V\subset W}}  1_W= A_{i m} A_{m k} 1_U$,  we obtain

\begin{align*}
    A_{i j} A_{j k} 1_U 
    = &\sum_{m=0}^k \sum_{\substack{W\in S(i):\\ \dim(U\cap W)=m}} \binom{n-i-k+m}{j-i-k+m}_q 1_W 
    \\
    =&
    \sum_{m=0}^k c_{i j k m} \sum_{\substack{V\in S(m):\\ V\subset U}} 
  \sum_{\substack{W\in S(i):\\ V\subset W}}  1_W \\
    = &
     \sum_{m=0}^k c_{i j k  m} A_{i m} A_{m k} 1_U, 
\end{align*}
where the coefficients $c_{i j k m}$ satisfy the relations
\[
   \sum_{r=0}^m \binom{m}{r}_q c_{i j k r} = \binom{n-i-k+m}{j-i-k+m}_q
\]
for all $0\leq m\leq k$. 
Finally, by Lemma \ref{lemma:carlitz}, we obtain 
\begin{equation*}
    c_{i j k m}=\sum_{r=0}^m (-1)^{m-r} q^{\binom{r+1}{2}+\binom{m}{2}-rm} \binom{m}{r}_q \binom{n-i-k+r}{j-i-k+r}_q.
\end{equation*}
\end{proof}

For convenience, we will denote $c_{i j k } = c_{i j k k}$. 
We will need the following Lemma about the asymptotic behavior of the numbers $c_{i j k}$ as $q$ tends to infinity.

\begin{lemma}\label{lemma:coeffs_limit}
Let $k\leq i< j\leq n-k$. Then, $c_{ij0}=\binom{n-i}{j-i}_q$, and for $k\geq 1$,
\begin{equation}\label{eq:cijkm_asymptotics}
c_{i j k}\binom{n-i}{j-i}_q^{-1}= 1- q^{-(n-k-j+1)}(1+O(q^{-1}))= 1 + O(q^{-1}).
\end{equation}
\end{lemma}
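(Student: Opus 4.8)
The plan is to analyze the quantity $c_{ijk}\binom{n-i}{j-i}_q^{-1}$ directly from the explicit formula for $c_{ijk}=c_{ijkk}$ in \eqref{eq:coeffs1}, namely
\[
c_{ijk}=\sum_{r=0}^k (-1)^{k-r} q^{\binom{r+1}{2}+\binom{k}{2}-rk}\binom{k}{r}_q\binom{n-i-k+r}{j-i-k+r}_q,
\]
and to identify which term dominates as $q\to\infty$. For $k=0$ the sum has the single term $r=0$, giving $c_{ij0}=\binom{n-i}{j-i}_q$ immediately. For $k\geq 1$, I would first record the $q$-degree of the $r$-th summand: by Lemma \ref{lemma:qbinom_asymptotics}, $\binom{k}{r}_q$ has degree $r(k-r)$ and $\binom{n-i-k+r}{j-i-k+r}_q$ has degree $(j-i-k+r)(n-j)$, so the $r$-th term has $q$-degree
\[
e(r)=\binom{r+1}{2}+\binom{k}{2}-rk+r(k-r)+(j-i-k+r)(n-j).
\]
A short computation shows $e(r)$ is a quadratic in $r$ with negative leading coefficient $-1/2$, and $e(k)-e(k-1)=n-k-j$. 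Since $j\leq n-k$ forces $n-k-j\geq 0$, the increments $e(r)-e(r-1)$ are nonnegative for all $r\leq k$ (they only get larger as $r$ decreases, by concavity), so the top degree is attained at $r=k$, and uniquely so precisely when $n-k-j\geq 1$; when $n-k-j=0$ the terms $r=k$ and $r=k-1$ tie in degree.

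The second step is to extract the leading behavior. The $r=k$ term is exactly $\binom{n-i}{j-i}_q$ (the power of $q$ and the $q$-binomial $\binom{k}{k}_q$ both equal $1$). Dividing through, $c_{ijk}\binom{n-i}{j-i}_q^{-1}=1+\sum_{r=0}^{k-1}(\pm)\,q^{\binom{r+1}{2}+\binom{k}{2}-rk}\binom{k}{r}_q\binom{n-i-k+r}{j-i-k+r}_q\binom{n-i}{j-i}_q^{-1}$. The $r=k-1$ term carries sign $(-1)^{1}=-1$, $q$-power exponent $\binom{k}{2}+\binom{k}{2}-(k-1)k=0$... wait, let me recompute: $\binom{r+1}{2}+\binom{k}{2}-rk$ at $r=k-1$ is $\binom{k}{2}+\binom{k}{2}-(k-1)k=k(k-1)-k(k-1)=0$, so the $q$-power is $1$; then $\binom{k}{k-1}_q=\binom{k}{1}_q=\frac{q^k-1}{q-1}=q^{k-1}(1+O(q^{-1}))$ and, by Lemma \ref{lemma:limit_of_qbinoms} applied with the substitution turning the ratio $\binom{n-i-k+(k-1)}{j-i-k+(k-1)}_q\binom{n-i}{j-i}_q^{-1}=\binom{n-i-1}{j-i-1}_q\binom{n-i}{j-i}_q^{-1}$ into $q^{-(n-j)}(1+O(q^{-1}))$ — here I use the identity $\binom{a-1}{b-1}_q\binom{a}{b}_q^{-1}$ with $a=n-i$, $b=j-i$, but rewritten as in the proof of Lemma \ref{lemma:qbinom_asymptotics} so Lemma \ref{lemma:limit_of_qbinoms} applies. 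Multiplying, the $r=k-1$ term is $-q^{k-1}\cdot q^{-(n-j)}(1+O(q^{-1}))=-q^{-(n-k-j+1)}(1+O(q^{-1}))$, which is exactly the claimed main correction. All remaining terms $r\leq k-2$ have strictly smaller $q$-degree (since $e(r)<e(k-1)<e(k)$ under $n-k-j\geq 0$, with the gaps growing), so after dividing by $\binom{n-i}{j-i}_q$ they contribute $O(q^{-(n-k-j+1)-1})=O(q^{-(n-k-j+2)})$, and since $n-k-j\geq 0$ this is $O(q^{-1})$ relative to the error term already present, hence absorbed into $-q^{-(n-k-j+1)}(1+O(q^{-1}))=1+O(q^{-1})$ after recalling $n-k-j+1\geq 1$.

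The main obstacle I anticipate is purely bookkeeping: keeping the degree comparison $e(r)$ honest across the two regimes $n-k-j=0$ versus $n-k-j\geq 1$, and making sure the $O(q^{-1})$ claims are uniform in $i,j,k$ (they are, since there are only finitely many triples $(i,j,k)$ for fixed $n$, so one may take the worst constant). There is no conceptual difficulty beyond the careful application of Lemma \ref{lemma:limit_of_qbinoms} to the ratio $\binom{n-i-1}{j-i-1}_q/\binom{n-i}{j-i}_q$; the only subtlety is writing that ratio in the form $\binom{a-(-1)\cdot\text{something}}{\cdots}_q\binom{\cdots}{\cdots}_q^{-1}$ that matches the hypothesis of Lemma \ref{lemma:limit_of_qbinoms} (equivalently, invoking the $k=1$ case of that lemma with $(a,b)=(n-i,j-i)$), after which everything is immediate.
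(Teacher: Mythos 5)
Your proof is correct and follows essentially the same route as the paper's: extract the $r=k$ term, show by a degree count that $r=k-1$ dominates the remainder with value $-q^{-(n-k-j+1)}(1+O(q^{-1}))$, and bound the $r\leq k-2$ terms. One small arithmetic slip: your increment $e(k)-e(k-1)$ should be $n-k-j+1$, not $n-k-j$ (so it is always $\geq 1$, meaning $r=k$ is always the unique top-degree term and there is no tie even when $j=n-k$); your direct computation of the $r=k-1$ contribution is nonetheless correct, so the conclusion stands.
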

\begin{proof}
By \eqref{eq:coeffs1}, we have
\[
c_{ijk}= \sum_{r=0}^k (-1)^{k-r} q^{\binom{r+1}{2}+\binom{k}{2}-rk} \binom{k}{r}_q \binom{n-i-k+r}{j-i-k+r}_q.
\]
For $r=k$, we have
\[
(-1)^{k-r} q^{\binom{r+1}{2}+\binom{k}{2}-rk} \binom{k}{r}_q \binom{n-i-k+r}{j-i-k+r}_q 
= \binom{n-i}{j-i}_q.
\]
In particular, for $k=0$, we obtain
\[
    c_{ij0}= \binom{n-i}{j-i}_q.
\]
Assume $k\geq 1$. For $r<k$, we divide into two cases: if $j-i-k+r<0$, then
\[
(-1)^{k-r} q^{\binom{r+1}{2}+\binom{k}{2}-rk} \binom{k}{r}_q \binom{n-i-k+r}{j-i-k+r}_q =0.
\]
Otherwise, we have by Lemma  \ref{lemma:qbinom_asymptotics}  and Lemma \ref{lemma:limit_of_qbinoms},
\begin{multline*}
 (-1)^{k-r} q^{\binom{r+1}{2}+\binom{k}{2}-rk} \binom{k}{r}_q \binom{n-i-k+r}{j-i-k+r}_q 
\binom{n-i}{j-i}_q^{-1}
\\
 = (-1)^{k-r} q^{\binom{r+1}{2}+\binom{k}{2}-rk} \cdot q^{r(k-r)}\cdot q^{-(k-r)(n-j)} (1+O(q^{-1}))
 \\
  = (-1)^{k-r} q^{\frac{-r^2 +r (1+2(n-j)) +(k^2-k-2k(n-j))}{2}} (1+O(q^{-1})).
\end{multline*}
The function
\[
    f(r)=-r^2 +r (1+2(n-j)) +(k^2-k-2k(n-j))
\]
is a parabola with maximum at $r=\frac{1+2(n-j)}{2}=n-j+\frac{1}{2}$. In particular, it is increasing for $r<k \leq n-j < n-j+\frac{1}{2}$. So, for $r=k-1$, we have
\[
f(k-1) = 2 (k+j-n-1).
\]
and for $r<k-1$
\[
f(r)\leq f(k-2) = 2 (2k+2j-2n-3).
\]
Therefore, for $r=k-1$ we obtain (note that $j-i-k+r= j-i-1\geq 0$ in this case),
\begin{multline*}
(-1)^{k-r} q^{\binom{r+1}{2}+\binom{k}{2}-rk} \binom{k}{r}_q \binom{n-i-k+r}{j-i-k+r}_q \binom{n-i}{j-i}_q^{-1}
\\
= - q^{-(n-k-j+1)}(1+O(q^{-1})),
\end{multline*}
and for $r<k-1$ we obtain
\[
(-1)^{k-r} q^{\binom{r+1}{2}+\binom{k}{2}-rk} \binom{k}{r}_q \binom{n-i-k+r}{j-i-k+r}_q \binom{n-i}{j-i}_q^{-1}=  O(q^{-(2n-2k-2j+3)}).
\]
So
\begin{multline*}
c_{ijk}\binom{n-i}{j-i}_q^{-1}=  1 - q^{-(n-k-j+1)}(1+O(q^{-1}))+ O(q^{-(2n-2k-2j+3)})
\\
=1 - q^{-(n-k-j+1)}(1+O(q^{-1})).
\end{multline*}
Since $n-k-j+1\geq 1$, we obtain $c_{ijk}\binom{n-i}{j-i}_q^{-1}=1+O(q^{-1})$.

\end{proof}

For $0\leq i\leq n$, let
$
    E^i=\text{span}(\{ 1_U:\, U\in S(i)\}).
$
In \cite{kantor1972incidence}, Kantor proved the following theorem:

\begin{theorem}[Kantor \cite{kantor1972incidence}]\label{thm:kantor}
 Let $0\leq k \leq \left\lfloor\frac{n}{2}\right\rfloor$ and $k\leq i\leq n-k$. Then
\[
 \text{rank}(A_{ik})=|S(k)|=\binom{n}{k}_q.
\]
In particular, the linear map $A_{ik}: E^{k}\to E^{i}$
is injective.
\end{theorem}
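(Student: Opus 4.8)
The plan is to prove the equivalent statement that $A_{ik}\colon E^k\to E^i$ is injective for all $k\le i\le n-k$ — since $A_{ik}$ has $|S(k)|=\binom{n}{k}_q$ columns, this is the same as the asserted rank equality — by induction on $k$. The base case $k=0$ is immediate, as $A_{i0}$ is the all-ones column vector. For the inductive step, assume the theorem for $k-1$, i.e.\ $A_{i'j}$ is injective whenever $j\le k-1$ and $j\le i'\le n-j$; the only index conditions I will invoke are $k-1\le k\le n-k+1$ and $k-1\le i\le n-k+1$, both routine from $k\le\lfloor n/2\rfloor$ and $k\le i\le n-k$. Applying the hypothesis with $j=k-1$, $i'=k$, the map $A_{k,k-1}$ is injective, so $A_{k-1,k}=A_{k,k-1}^{t}$ has rank $\binom{n}{k-1}_q$ and $A_{k,k-1}^{t}A_{k,k-1}$ is positive definite on $E^{k-1}$; hence
\[
  E^k \;=\; A_{k,k-1}(E^{k-1})\ \oplus\ M_k,\qquad M_k:=\ker\!\bigl(A_{k-1,k}\colon E^k\to E^{k-1}\bigr),
\]
the intersection being $0$ (apply the nonsingular map $A_{k-1,k}A_{k,k-1}$) and the dimensions $\binom{n}{k-1}_q$ and $\binom{n}{k}_q-\binom{n}{k-1}_q$ summing to $\binom{n}{k}_q$. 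I then want to show $A_{ik}$ is injective on each of the two summands, with $A_{ik}\bigl(A_{k,k-1}(E^{k-1})\bigr)\subseteq A_{i,k-1}(E^{k-1})$, $A_{ik}(M_k)\subseteq\ker(A_{k-1,i})$, and $A_{i,k-1}(E^{k-1})\cap\ker(A_{k-1,i})=\{0\}$; injectivity of $A_{ik}$ on all of $E^k$ is then a one-line consequence.

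The first summand is handled by Lemma~\ref{lemma:product1}: $A_{ik}A_{k,k-1}=\binom{i-k+1}{1}_q A_{i,k-1}$ with $\binom{i-k+1}{1}_q\neq0$, and $A_{i,k-1}$ is injective by the inductive hypothesis, so $A_{ik}$ is injective on $A_{k,k-1}(E^{k-1})$ with image $A_{i,k-1}(E^{k-1})$. Moreover $A_{k-1,i}A_{i,k-1}=A_{i,k-1}^{t}A_{i,k-1}$ is positive definite on $E^{k-1}$, which gives $A_{i,k-1}(E^{k-1})\cap\ker(A_{k-1,i})=\{0\}$.

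For the second summand the key point is that every $A_{mk}$ with $0\le m<k$ annihilates $M_k$: transposing the instance $A_{k,k-1}A_{k-1,m}=\binom{k-m}{1}_q A_{km}$ of Lemma~\ref{lemma:product1} yields $A_{mk}=\binom{k-m}{1}_q^{-1}A_{m,k-1}A_{k-1,k}$, which kills $\ker(A_{k-1,k})=M_k$. Feeding this into Lemma~\ref{lemma:product2}: for $z\in M_k$ the expansion $A_{ki}A_{ik}=\sum_{m=0}^{k}c_{k,i,k,m}A_{km}A_{mk}$ collapses to $A_{ki}A_{ik}z=c_{k,i,k}z$ (only the $m=k$ term, where $A_{kk}=I$, survives; here $c_{k,i,k}:=c_{k,i,k,k}$), while transposing $A_{ki}A_{i,k-1}=\sum_{m=0}^{k-1}c_{k,i,k-1,m}A_{km}A_{m,k-1}$ and using the same annihilation gives $A_{k-1,i}A_{ik}z=0$, i.e.\ $A_{ik}(M_k)\subseteq\ker(A_{k-1,i})$. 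Therefore $A_{ik}$ is injective on $M_k$ as soon as $c_{k,i,k}\ne 0$, and the entire proof comes down to showing $c_{k,i,k}\ne 0$ for $k\le i\le n-k$. This is the one real obstacle.

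To settle it, I would invoke the $q$-Vandermonde identity $\binom{A+B}{C}_q=\sum_{r}q^{r(B-C+r)}\binom{A}{r}_q\binom{B}{C-r}_q$. From the proof of Lemma~\ref{lemma:product2}, the numbers $c_{k,i,k,0},\dots,c_{k,i,k,k}$ are the unique solution of the unit lower-triangular system $\sum_{r=0}^{m}\binom{m}{r}_q c_{k,i,k,r}=\binom{n-2k+m}{i-2k+m}_q=\binom{m+(n-2k)}{n-i}_q$ for $0\le m\le k$. Expanding the right-hand side by $q$-Vandermonde as $\sum_{r=0}^{m}\binom{m}{r}_q\,q^{r(i-2k+r)}\binom{n-2k}{n-i-r}_q$ and matching coefficients, $c_{k,i,k,r}=q^{r(i-2k+r)}\binom{n-2k}{n-i-r}_q$; in particular
\[
  c_{k,i,k}=q^{k(i-k)}\binom{n-2k}{i-k}_q,
\]
which is strictly positive exactly when $0\le i-k\le n-2k$, i.e.\ for $k\le i\le n-k$. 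This closes the induction. Besides the bookkeeping of which instances of Lemmas~\ref{lemma:product1} and~\ref{lemma:product2} to invoke and the verification of the index inequalities, the only genuinely nontrivial step is recognizing that the top coefficient $c_{k,i,k}$ telescopes, via $q$-Vandermonde, into the manifestly nonzero quantity $q^{k(i-k)}\binom{n-2k}{i-k}_q$.
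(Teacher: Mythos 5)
The paper does not prove this theorem at all: it is imported verbatim from Kantor's 1972 paper and used as a black box (notably in the proof of Proposition~\ref{prop:ei_decomposition}). So there is no in-paper proof to compare against; what you have written is a genuinely new, self-contained proof built from the paper's own machinery, and it is correct. Your induction cleanly reduces everything to the single non-trivial assertion $c_{k,i,k}\neq 0$ on the range $k\le i\le n-k$, and the $q$-Vandermonde computation settles that with the explicit closed form $c_{k,i,k,r}=q^{r(i-2k+r)}\binom{n-2k}{n-i-r}_q$ (hence $c_{k,i,k}=q^{k(i-k)}\binom{n-2k}{i-k}_q>0$), obtained by matching against the defining unit-triangular system $\sum_{r=0}^m\binom{m}{r}_q c_{k,i,k,r}=\binom{(n-2k)+m}{n-i}_q$ from the proof of Lemma~\ref{lemma:product2}. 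I verified all the index conditions in your invocations of Lemmas~\ref{lemma:product1} and~\ref{lemma:product2}, and the two annihilation claims about $M_k=\ker A_{k-1,k}$ are exactly the paper's Lemmas~\ref{lemma:orthogonal1} and~\ref{lemma:orthogonal2} (with $M_k=\tilde E^k$), which you re-derive along the way rather than cite; this makes the argument pleasantly self-contained. Two small observations worth recording: your decomposition $E^k=A_{k,k-1}(E^{k-1})\oplus M_k$ is precisely the paper's $E^k=A_{k,k-1}E^{k-1}\oplus\tilde E^k$, so the proof simultaneously re-establishes the dimension count $\dim\tilde E^k=\binom{n}{k}_q-\binom{n}{k-1}_q$ without invoking Kantor; and the explicit formula for $c_{k,i,k}$ is a sharper fact than what the paper proves about these coefficients (Lemma~\ref{lemma:coeffs_limit} only gives the asymptotic $c_{ijk}=\binom{n-i}{j-i}_q(1+O(q^{-1}))$), so your $q$-Vandermonde identity could in principle be used to shorten or make exact some of the later asymptotic estimates. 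In short: correct, and a nice addition, not a repetition.
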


Let $\tilde{E}^0=E^0$, and, for $1\leq k \leq \left\lfloor\frac{n}{2}\right\rfloor$ , let
$\tilde{E}^k$ be the orthogonal complement in $E^k$ of the subspace $A_{k, k-1} E^{k-1}$. Note that $\dim(\tilde{E}^0)=1$ and, since by Theorem \ref{thm:kantor} $A_{k,k-1}$ is injective,  $\dim(\tilde{E}^k)=\dim(E^k)-\dim(E^{k-1})= \binom{n}{k}_q-\binom{n}{k-1}_q$ for all $1\leq k\leq \lfrac{n}{2}$.

\begin{proposition}\label{prop:ei_decomposition}
Let $0\leq i\leq n$. Then,
\[
E^{i}= \bigoplus_{\substack0\leq k\leq \left\lfloor\frac{n}{2}\right\rfloor, \,k\leq i \leq n-k} A_{i k}\tilde{E}^{k}.
\]
\end{proposition}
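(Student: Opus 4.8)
\textbf{Proof proposal for Proposition \ref{prop:ei_decomposition}.}

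The plan is to prove the claim by induction on $i$, but actually a cleaner route is to prove the two statements (i) the subspaces $A_{ik}\tilde E^k$ (over the relevant range of $k$) span $E^i$, and (ii) they are mutually orthogonal, hence the sum is direct; then a dimension count closes the argument. First I would record the dimension identity: by Theorem \ref{thm:kantor}, $A_{ik}$ is injective on $E^k$ for $k\le\lfloor n/2\rfloor$ and $k\le i\le n-k$, so $\dim A_{ik}\tilde E^k=\dim\tilde E^k=\binom{n}{k}_q-\binom{n}{k-1}_q$ (with the convention $\binom{n}{-1}_q=0$ for $k=0$). Summing over $0\le k\le\min(i,n-i)$ telescopes to $\binom{n}{\min(i,n-i)}_q=\binom{n}{i}_q=\dim E^i$. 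So once I know the sum is direct \emph{and} contained in $E^i$, equality of dimensions forces it to be all of $E^i$; conversely, if I know the pieces span $E^i$ and are pairwise orthogonal, directness is automatic. Either way, the two things to establish are \emph{spanning} and \emph{orthogonality}.

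For \textbf{spanning}, I would argue by induction on $i$, or more transparently by a downward/upward filtration argument. Set $F^i_k=\sum_{0\le m\le k} A_{im}E^m$ (a nested chain of subspaces of $E^i$ as $k$ grows). The point is that $A_{ik}E^k = A_{ik}\tilde E^k \oplus A_{ik}A_{k,k-1}E^{k-1}$ by definition of $\tilde E^k$ as the orthogonal complement of $A_{k,k-1}E^{k-1}$ inside $E^k$, combined with injectivity of $A_{ik}$; and by Lemma \ref{lemma:product1}, $A_{ik}A_{k,k-1}=\binom{i-(k-1)}{k-(k-1)}_q A_{i,k-1}=\binom{i-k+1}{1}_q A_{i,k-1}$, which is a nonzero scalar multiple of $A_{i,k-1}$ whenever $i\ge k$. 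Hence $A_{ik}E^k \subseteq A_{ik}\tilde E^k + A_{i,k-1}E^{k-1}$, and inductively $F^i_k = \sum_{0\le m\le k} A_{im}\tilde E^m$. Taking $k=\min(i,n-i)$ and using Kantor's theorem (\cite{kantor1972incidence}, Theorem 2 there, or the standard fact that $E^i=\sum_{m\le\min(i,n-i)}A_{im}E^m$ — this is essentially the statement that the subspace inclusion matrix $A_{i,\lfloor\cdot\rfloor}$ has full rank, so the inclusion maps from the ``bottom'' levels generate everything) gives $E^i = \sum_{0\le m\le \min(i,n-i)} A_{im}\tilde E^m$. If the excerpt does not hand me $E^i=\sum_m A_{im}E^m$ directly, I would deduce it from Theorem \ref{thm:kantor} by the dimension count above applied to $\sum_m A_{im}\tilde E^m$: once orthogonality is known, this sum already has dimension $\binom{n}{i}_q = \dim E^i$, so it must equal $E^i$ — which means spanning follows \emph{for free} from orthogonality plus the dimension identity, and I do not need an independent spanning argument at all.

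For \textbf{orthogonality}, fix $0\le k < \ell \le \min(i,n-i)$ and take $x\in\tilde E^k$, $y\in\tilde E^\ell$; I must show $\langle A_{ik}x, A_{i\ell}y\rangle_{E^i}=0$. The key computation is that $A_{i\ell}^t A_{ik} = A_{\ell i}A_{ik}$ is, up to a positive scalar, $A_{\ell k}$: indeed by Lemma \ref{lemma:product1} (applied with the roles $k\le \ell\le i$, noting $A_{\ell i}=A_{i\ell}^t$ and $A_{i\ell}A_{\ell k}=\binom{i-k}{\ell-k}_q A_{ik}$, which after transposing and relabeling gives $A_{\ell i}A_{ik}=\binom{i-k}{i-\ell}_q A_{\ell k}$ or similar — I would nail down the exact constant, which is a positive power-product of $q$-binomials, hence nonzero). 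Then $\langle A_{ik}x, A_{i\ell}y\rangle = \langle A_{i\ell}^t A_{ik} x, y\rangle = (\text{const})\cdot\langle A_{\ell k}x, y\rangle$. Now $A_{\ell k}x\in A_{\ell k}E^k = A_{\ell, \ell-1}(\cdots) \subseteq A_{\ell,\ell-1}E^{\ell-1}$ — more precisely $A_{\ell k}=\binom{\ell-k}{\ell-1-k}_q^{-1}\cdot(\text{scalar})\cdot$ factors through $A_{\ell,\ell-1}$ via Lemma \ref{lemma:product1}, so $A_{\ell k}x$ lies in the image of $A_{\ell,\ell-1}$ — and $y\in\tilde E^\ell$ is by definition orthogonal to $A_{\ell,\ell-1}E^{\ell-1}$. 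Hence $\langle A_{\ell k}x,y\rangle=0$, giving orthogonality. The main obstacle I anticipate is purely bookkeeping: correctly tracking the various $q$-binomial constants in the repeated applications of Lemma \ref{lemma:product1} (transposes, index ranges, and making sure each constant is nonzero in the ranges that occur), and being careful that the inner product on $C^0(\flagcomplex{n}{q})$ restricted to $E^i$ — which carries the weight $w(\{U\})$ — is a positive multiple of the standard Euclidean inner product on $\mathbb{R}^{S(i)}$ (it is, since by Lemma \ref{lemma:weight_function_depends_on_dim} $w(\{U\})$ depends only on $\dim U=i$, so on $E^i$ the weighted inner product is $w_i$ times the standard one, and positive scaling does not affect orthogonality). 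With those points settled, orthogonality plus the telescoping dimension count yields the direct-sum decomposition and the equality with $E^i$ simultaneously.
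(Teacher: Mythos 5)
Your route (pairwise orthogonality of the summands $A_{ik}\tilde{E}^k$ plus a telescoping dimension count) is genuinely different from the paper's: the paper argues by induction on $i$ for $i\le\lfloor n/2\rfloor$, using the fact that by construction $E^i=A_{i,i-1}E^{i-1}\oplus\tilde{E}^i$ together with Lemma \ref{lemma:product1} to unfold the inner sum, and then pushes forward through the injective map $A_{i,n-i}$ to handle $i>\lfloor n/2\rfloor$; no orthogonality between the pieces $A_{ik}\tilde{E}^k$ is ever invoked there. Your dimension count is correct, the reduction of the whole proposition to orthogonality is sound, and the point you raise about the weighted versus Euclidean inner product is correctly resolved (on each $E^i$ the weight $w(\{U\})$ is constant by Lemma \ref{lemma:weight_function_depends_on_dim}, so the two inner products agree up to a positive scalar).

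However, the key step of your orthogonality argument is wrong as written, and it is not merely a matter of ``nailing down the constant.'' You claim that $A_{i\ell}^tA_{ik}=A_{\ell i}A_{ik}$ is a scalar multiple of $A_{\ell k}$. It is not: Lemma \ref{lemma:product1} applies only to a composition $A_{ab}A_{bc}$ with $c\le b\le a$ (middle index in the middle), whereas here the middle index $i$ is the largest, with $k<\ell\le i$. Transposing $A_{i\ell}A_{\ell k}=\binom{i-k}{\ell-k}_qA_{ik}$ gives $A_{k\ell}A_{\ell i}=\binom{i-k}{\ell-k}_qA_{ki}$, which is a different product. Concretely, $(A_{\ell i}A_{ik})_{V,U}$ counts the $i$-dimensional $W$ containing both $U$ and $V$, which depends on $\dim(U\cap V)$ and not merely on whether $U\subset V$, so it cannot be a multiple of $A_{\ell k}$. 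The correct identity is the paper's Lemma \ref{lemma:product2}: $A_{\ell i}A_{ik}=\sum_{m=0}^k c_{\ell,i,k,m}A_{\ell m}A_{mk}$. Fortunately this repairs rather than sinks your argument: for $m\le k<\ell$, each $A_{\ell m}A_{mk}x$ lies in the image of $A_{\ell,\ell-1}$ (by Lemma \ref{lemma:product1}, exactly the factoring you carry out for $A_{\ell k}x$), hence is orthogonal to $y\in\tilde{E}^\ell$. Even more directly, write $\langle A_{ik}x,A_{i\ell}y\rangle_{E^i}=w_i\,x^tA_{ki}A_{i\ell}y$ and apply the paper's Lemma \ref{lemma:orthogonal2} with $v=y\in\tilde{E}^\ell$ and $k<\ell\le i$ to conclude $A_{ki}A_{i\ell}y=0$. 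With that substitution your orthogonality-plus-dimension-count proof goes through and is a clean alternative to the paper's inductive one.
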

\begin{proof}
Assume first $i\leq \lfrac{n}{2}$. We argue by induction on $i$. For $i=0$ the claim holds trivially.
Let $i>0$. Note that, by Lemma \ref{lemma:product1}, we have for $k\leq i-1$,
\[
    A_{i,i-1} A_{i-1,k} \tilde{E}^k =A_{i k}\tilde{E}^k.
\]
By the induction hypothesis, we obtain
\begin{align*}
    E^i &= A_{i,i-i} E^{i-1} \bigoplus \tilde{E}^{i}
    \\
    &= A_{i,i-1}\left(\bigoplus_{0\leq k\leq \lfrac{n}{2}, \, k\leq i-1} A_{i-1,k} \tilde{E}^k\right) \bigoplus  \tilde{E}^{i}
    \\
    &=
    \left(\bigoplus_{0\leq k\leq \lfrac{n}{2}, \, k\leq i-1} A_{i k} \tilde{E}^k \right) \bigoplus  \tilde{E}^{i}
\\
 &=
 \bigoplus_{0\leq k\leq \lfrac{n}{2}, \, k\leq i} A_{i k} \tilde{E}^k
 =
 \bigoplus_{0\leq k\leq \lfrac{n}{2}, \, k\leq i\leq n-k} A_{i k} \tilde{E}^k.
\end{align*}

Now, let $i> \lfrac{n}{2}$. By Lemma \ref{lemma:product1}, we have for $k\leq i\leq n-k$,
\[
    A_{i,n-i} A_{n-i,k}\tilde{E}^k = A_{i k}\tilde{E}^k.
\]
By Theorem \ref{thm:kantor}, we have $E^i=A_{i,n-i} E^{n-i}$. Therefore, since $n-i\leq \lfrac{n}{2}$, we obtain
\begin{align*}
    E^i &= A_{i,n-i} E^{n-i}
    =
     A_{i,n-i} \left(\bigoplus_{0\leq k\leq \lfrac{n}{2}, \, k\leq i\leq n-k} A_{n-i,k} \tilde{E}^k\right)
     \\
     &=
     \bigoplus_{0\leq k\leq \lfrac{n}{2}, \, k\leq i\leq n-k} A_{i k} \tilde{E}^k.
\end{align*}
\end{proof}

\begin{remark}
The action of the general linear group $\gl{n,q}$ on the $i$-dimensional subspaces of $\FF_q^n$ induces a $\gl{n,q}$-module structure on 
$E^i$. The decomposition of $E^i$ in Proposition \ref{prop:ei_decomposition} is in fact its decomposition into irreducible subrepresentations.
Although we don't explicitly use this fact here, this was our motivation for the definition of the subspaces $\tilde{E}^k$.
\end{remark}

The following properties of the subspaces $\tilde{E}^k$ will be needed later:

\begin{lemma}\label{lemma:orthogonal1} Let $0\leq k\leq \lfrac{n}{2}$ and let $v\in \tilde{E}^k$. Then, $A_{j k} v =0$ for all $0\leq j<k$.
\end{lemma}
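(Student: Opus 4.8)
The plan is to prove the statement by dualizing with respect to the standard inner product. First I would note that the inner product subtlety is harmless: by Lemma \ref{lemma:weight_function_depends_on_dim} the weight $w(\{U\})$ of a vertex $U$ depends only on $\dim U$, so on each $E^k$ the weighted inner product of the paper is a fixed positive scalar multiple of the standard inner product for which $\{1_U : U\in S(k)\}$ is orthonormal. Consequently $\tilde{E}^k$ is equally well described as the \emph{standard}-orthogonal complement of $A_{k,k-1}E^{k-1}$ inside $E^k$, and I will use the standard inner product from now on; with respect to it the subspace inclusion matrices satisfy $A_{ij}^{t}=A_{ji}$.

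Next I would reduce the claim to a containment of subspaces. Fix $0\le j<k$ and $v\in\tilde{E}^k$. A vector in $\mathbb{R}^{S(j)}$ is zero iff it is orthogonal to every $1_W$ with $W\in S(j)$, so $A_{jk}v=0$ iff $\langle A_{jk}v,1_W\rangle=\langle v,A_{kj}1_W\rangle=0$ for all $W\in S(j)$; that is, iff $v\perp A_{kj}E^j$. Since $v\perp A_{k,k-1}E^{k-1}$ by definition of $\tilde{E}^k$, it therefore suffices to prove
\[
A_{kj}E^j\ \subseteq\ A_{k,k-1}E^{k-1}\qquad\text{for all }0\le j\le k-1 .
\]
This I would obtain from Kantor's product identity, Lemma \ref{lemma:product1}, applied to the triple $(k,k-1,j)$ (valid since $j\le k-1\le k$): it gives $A_{k,k-1}A_{k-1,j}=\binom{k-j}{k-1-j}_q A_{kj}=\binom{k-j}{1}_q A_{kj}$, and $\binom{k-j}{1}_q=(q^{k-j}-1)/(q-1)$ is a positive integer, hence nonzero, since $j<k$. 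Thus $A_{kj}=\binom{k-j}{1}_q^{-1}A_{k,k-1}A_{k-1,j}$, so $A_{kj}E^j=\binom{k-j}{1}_q^{-1}A_{k,k-1}\bigl(A_{k-1,j}E^j\bigr)\subseteq A_{k,k-1}E^{k-1}$, as required. (When $j=0$ this reads that $A_{k0}$ applied to $E^0=\mathbb{R}$ spans the line generated by the all-ones vector of $\mathbb{R}^{S(k)}$; for $k=1$ the whole statement is then just $v\perp$ the all-ones vector, i.e.\ $v\in\tilde{E}^1$, and the case $k=0$ is vacuous.)

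I do not expect a genuine obstacle. The only points requiring a little care are the remark that passing between the weighted and the standard inner product does not change $\tilde{E}^k$, and verifying that the index inequalities demanded by Lemma \ref{lemma:product1} hold in each application; once those are in place the proof is a two-line dualization followed by a single invocation of the product formula.
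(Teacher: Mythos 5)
Your proof is correct and uses essentially the same mechanism as the paper: both arguments reduce to Kantor's product identity (Lemma \ref{lemma:product1}) with the triple $(k,k-1,j)$ and the transpose relation $A_{ij}^t=A_{ji}$. The paper phrases it as factoring $A_{jk}=\binom{k-j}{k-j-1}_q^{-1}A_{j,k-1}A_{k-1,k}$ and applying it to $v$ after first noting $A_{k-1,k}v=0$, while you dualize to the subspace containment $A_{kj}E^j\subseteq A_{k,k-1}E^{k-1}$; these are the same argument in two guises, and your preliminary remark about weighted versus standard inner products, while a reasonable sanity check, is not needed since the paper's definition of $\tilde{E}^k$ already implicitly uses the standard inner product (as its proof, via $v^t A_{k,k-1}u$, makes clear).
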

\begin{proof}
By the definition of $\tilde{E}^k$, we have
\[
     v^t (A_{k,k-1} u)=0
\]
for all $u\in E^{k-1}$. Hence, $v^t A_{k,k-1}=0$. That is,
\[
    A_{k-1,k} v = (v^t A_{k,k-1})^t=0.
\]
Now, let $0\leq j\leq k-1$. By Lemma \ref{lemma:product1} we have
\[
    A_{k,k-1} A_{k-1,j}= \binom{k-j}{k-j-1}_q A_{k j}.
\]
Transposing the equation, we obtain
\[
    A_{j k} v= \binom{k-j}{k-j-1}_q^{-1} A_{j,k-1} (A_{k-1,k} v)=0.
\]
\end{proof}

\begin{lemma}\label{lemma:orthogonal2}
Let $0\leq k\leq \left\lfloor\frac{n}{2}\right\rfloor$. Let $0\leq i<k\leq j\leq n$, and let $v\in \tilde{E}^k$. Then,
$
    A_{i j} A_{j k} v=0.
$
\end{lemma}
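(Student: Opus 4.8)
The plan is to reduce the statement to Lemma~\ref{lemma:orthogonal1} by expanding the product $A_{ij}A_{jk}$ as a linear combination of products $A_{im}A_{mk}$ with $m\leq i<k$, and then observing that each summand annihilates $v$. The only subtlety is that Lemma~\ref{lemma:product2} expands $A_{ab}A_{bc}$ only when $c\leq a\leq b$, whereas here the outer indices $i$ and $k$ satisfy $i<k$, so the lemma does not apply directly to $A_{ij}A_{jk}$. To get around this I would first transpose: since $A_{ab}^{t}=A_{ba}$ for all $a,b$, we have $A_{ij}A_{jk}=(A_{kj}A_{ji})^{t}$, and now the product $A_{kj}A_{ji}$ has outer indices $k$ and $i$ with $i\leq k\leq j\leq n$, so it falls within the scope of Lemma~\ref{lemma:product2}.

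Concretely, applying Lemma~\ref{lemma:product2} to $A_{kj}A_{ji}$ (with the roles of its indices $i,j,k$ played by $k,j,i$) gives
\[
A_{kj}A_{ji}=\sum_{m=0}^{i} c_{kjim}\, A_{km}A_{mi}
\]
for suitable scalars $c_{kjim}$. Transposing both sides and using $(A_{km}A_{mi})^{t}=A_{im}A_{mk}$ yields
\[
A_{ij}A_{jk}=\sum_{m=0}^{i} c_{kjim}\, A_{im}A_{mk}.
\]
Now let $v\in\tilde{E}^{k}$. For every $m$ with $0\leq m\leq i$ we have $m<k$, since $i<k$ by hypothesis; hence Lemma~\ref{lemma:orthogonal1} gives $A_{mk}v=0$, and therefore $A_{im}A_{mk}v=A_{im}(A_{mk}v)=0$. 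Summing over $m$, we conclude $A_{ij}A_{jk}v=0$, as desired.

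I do not expect a real obstacle here: once one notices that a transpose is needed to bring the index pattern into the form required by Lemma~\ref{lemma:product2}, the rest is a two-line computation combining that lemma with Lemma~\ref{lemma:orthogonal1}. The points worth double-checking are merely bookkeeping: that the hypotheses of Lemma~\ref{lemma:product2} hold after the substitution, i.e.\ $0\leq i\leq k\leq j\leq n$, which is immediate from $0\leq i<k\leq j\leq n$; and that the explicit values of the coefficients $c_{kjim}$ are irrelevant, since each term of the expansion is killed by $v$ no matter what those coefficients are.
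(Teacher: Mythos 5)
Your argument is exactly the paper's: apply Lemma~\ref{lemma:product2} to $A_{kj}A_{ji}$, transpose to obtain $A_{ij}A_{jk}=\sum_{m=0}^{i} c_{kjim}\,A_{im}A_{mk}$, and then kill each term using $A_{mk}v=0$ from Lemma~\ref{lemma:orthogonal1} since $m\leq i<k$. Correct, and identical in substance to the paper's proof.
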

\begin{proof}

By Lemma \ref{lemma:product2}, we have
\[
    A_{k j} A_{j i}=\sum_{m=0}^i c_{k j i m} A_{k m} A_{m i}.
\]
Transposing the equation, we obtain
\[
    A_{i j} A_{j k} v= \sum_{m=0}^i c_{k j i m} A_{i m} A_{m k} v.
\]
By Lemma \ref{lemma:orthogonal1}, we have $A_{m k}v=0$ for all $m\leq i<k$. Therefore, we obtain
\[
    A_{i j} A_{j k}v=0,
\]
as wanted.
\end{proof}

\section{Proof of Theorem \ref{thm:diagonal_representation}}\label{sec:laplacian_pap_proof}\label{sec:proof_diag_form}

For $0\leq k\leq \left\lfloor\frac{n}{2}\right\rfloor$, let $B_k$ be a basis for $\tilde{E}^{k}$. Then, by Proposition \ref{prop:ei_decomposition},
\[
    B= \bigcup_{k=0}^{\left\lfloor\frac{n}{2}\right\rfloor}
    \bigcup_{v\in B_k} \{A_{i k} v :\, \max\{1,k\}\leq i\leq \min\{n-1,n-k\}\}
\]
is a basis for $C^0(\flagcomplex{n}{q})$. We can now prove Theorem \ref{thm:diagonal_representation}.

\begin{proof}[Proof of Theorem \ref{thm:diagonal_representation}]
Let $L=\lap{0}{n}{q}$.
Let $0\leq k\leq \lfrac{n}{2}$, $v\in B_k$ and $\max\{1,k\}\leq j\leq \min\{n-1,n-k\}$. By Proposition \ref{lemma:matrix_form_L0}, we have
\[
    L A_{j k} v =-\sum_{i=1}^{j-1} \binom{n-i}{j-i}_q^{-1} A_{i j} A_{j k} v + (n-2) A_{j k} v - \sum_{i=j+1}^{n-1} \binom{i}{j}_q^{-1} A_{i j} A_{j k} v.
\]

Let $1\leq i\leq n-1$. If $i<k$, we have, by Lemma \ref{lemma:orthogonal2},
$
    A_{i j} A_{j k} v=0. 
$
If $i> n-k$, then  $n-i<k\leq j\leq n-k < i$. So, by Lemma \ref{lemma:product2}, we have (after transposing the equation),
\[
    A_{n-i,i} A_{i j} = \sum_{m=0}^{n-i} c_{j,i,n-i,m} A_{n-i,m} A_{m j}.
\]
Therefore, by Lemma \ref{lemma:orthogonal2}, we obtain
\[
A_{n-i,i} A_{i j} A_{j k} v =  \sum_{m=0}^{n-i} c_{j,i,n-i,m} A_{n-i, m} A_{m j} A_{j k} v = 0.
\]
Since $A_{n-i,i}$ is invertible (by Theorem \ref{thm:kantor}), we obtain
$
    A_{i j} A_{j k}v =0.
$
Now, assume $k\leq i\leq n-k$. If $i< j$, we have by Lemma \ref{lemma:product2} and Lemma \ref{lemma:orthogonal1}, 
\[
    A_{i j} A_{j k} v = \sum_{m=0}^k c_{i j k m} A_{i m} A_{m k}v= c_{i j k k} A_{i k} v= c_{i j k} A_{i k} v.
\]
If $i>j$, we have by Lemma \ref{lemma:product1},
\[
    A_{i j} A_{j k} v= \binom{i-k}{j-k}_q A_{i k} v.
\]
Therefore, we obtain
\begin{multline*}
L A_{j k} v = -\sum_{i=\max\{1,k\}}^{j-1} c_{i j k} \binom{n-i}{j-i}_q^{-1} A_{i k} v + (n-2) A_{j k} v \\- \sum_{i=j+1}^{\min\{n-1,n-k\}} \binom{i}{j}_q^{-1} \binom{i-k}{j-k}_q A_{i k} v.
\end{multline*}
Thus, the subspace spanned by the vectors 
\[
\{A_{ik} v:\, \max\{1,k\}\leq i\leq \min\{n-1,n-k\}\}
\]
is invariant under $L$, and the matrix representation of the restriction of $L$ on this subspace is exactly
 $\tilde{L}_k$ (for the case $k=0$, note that, by \eqref{eq:coeffs1}, we have $c_{i j 0}=\binom{n-i}{j-i}_q$). Therefore,
the representation of $L$ with respect to the basis $B$ is the diagonal block matrix
\[
    \begin{pmatrix}
    L_0 &   & \\
     & \ddots & \\
     & & L_{\lfrac{n}{2}}
\end{pmatrix}
\]
where
\[
    L_k = I_{\dim(\tilde{E}^k)} \otimes \tilde{L}_k   = \begin{pmatrix}
    \tilde{L}_k &   & \\
     & \ddots & \\
     & & \tilde{L}_k
\end{pmatrix}
\]
for all $0\leq k\leq \lfrac{n}{2}$.
\end{proof}

From Theorem \ref{thm:diagonal_representation} we already obtain some interesting immediate consequences. First, we show that the second part of Corollary \ref{cor:pap0case} holds:

\begin{corollary}\label{cor:pap_part1}
Let $n\geq 3$. Then, for every $\epsilon>0$ there is an integer $q_0(n,\epsilon)$ such that, for every prime power $q\geq q_0(n,\epsilon)$, every eigenvalue $\lambda\neq 0,n-1$ of $\lap{0}{n}{q}$ satisfies
    $
        |\lambda-(n-2)|<\epsilon.
    $
\end{corollary}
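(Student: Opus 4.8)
The plan is to read off the claim directly from the block-diagonal form established in Theorem~\ref{thm:diagonal_representation}. Since $\lap{0}{n}{q}$ is similar to $\bigoplus_{k=0}^{\lfloor n/2\rfloor} I_{\dim(\tilde E^k)}\otimes \tilde L_k$, the set of eigenvalues of $\lap{0}{n}{q}$ is exactly the union of the sets of eigenvalues of the matrices $\tilde L_k$, $0\le k\le \lfloor n/2\rfloor$. So it suffices to show that for each fixed $k$, every eigenvalue of $\tilde L_k$ lies within $\epsilon$ of $n-2$, with the exceptions $\lambda=0$ and $\lambda=n-1$ accounted for. First I would dispose of the $k=0$ block: by \eqref{eq:lap_block_formula_k0}, $\tilde L_0 = (n-1)I_{n-1} - J_{n-1}$ where $J_{n-1}$ is the all-ones matrix, so its eigenvalues are $n-1-(n-1)=0$ (once, eigenvector the all-ones vector) and $n-1-0 = n-1$ (with multiplicity $n-2$). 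These are precisely the two exceptional eigenvalues in the statement, and they are independent of $q$, so they contribute the $0$ and the $n-1$ and nothing near $n-2$ from the $k=0$ part.

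Next I would handle $k\ge 1$. The key point is that the size of $\tilde L_k$ is $n-2k+1$, which does not depend on $q$, while by \eqref{eq:lap_block_formula} every diagonal entry equals $n-2$ and every off-diagonal entry is, up to sign, a ratio of $q$-binomial coefficients. By Lemma~\ref{lemma:limit_of_qbinoms} the entries of the form $\binom{i-k}{j-k}_q\binom{i}{j}_q^{-1}$ (for $i>j$) are $q^{-k(i-j)}(1+O(q^{-1})) = O(q^{-k}) = O(q^{-1})$, and by Lemma~\ref{lemma:coeffs_limit} together with Lemma~\ref{lemma:qbinom_asymptotics} the entries of the form $c_{ijk}\binom{n-i}{j-i}_q^{-1}$ (for $i<j$) satisfy $c_{ijk}\binom{n-i}{j-i}_q^{-1} = 1+O(q^{-1})$ while $\binom{n-i}{j-i}_q^{-1}=q^{-(j-i)(n-j)}(1+O(q^{-1})) = O(q^{-1})$, so these entries are also $O(q^{-1})$. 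Hence $\tilde L_k = (n-2)I_{n-2k+1} + E_k(q)$ where $E_k(q)$ is a fixed-size matrix all of whose entries are $O(q^{-1})$, so $\|E_k(q)\|\to 0$ as $q\to\infty$ (any matrix norm on fixed-size matrices works). By continuity of eigenvalues, or more concretely by the Bauer--Fike / Gershgorin bound $|\lambda - (n-2)|\le \|E_k(q)\|_\infty$ for every eigenvalue $\lambda$ of $\tilde L_k$, we conclude that for $q$ large every eigenvalue of $\tilde L_k$ with $k\ge 1$ lies within $\epsilon$ of $n-2$.

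Finally I would assemble the pieces: choose $q_0(n,\epsilon)$ large enough that $\|E_k(q)\|_\infty < \epsilon$ simultaneously for all $1\le k\le \lfloor n/2\rfloor$ (a finite set of conditions, each a statement about $O(q^{-1})$ quantities). Then for $q\ge q_0(n,\epsilon)$, the eigenvalues of $\lap{0}{n}{q}$ are: the $0$ and the $n-1$ coming from $\tilde L_0$, plus a list of eigenvalues, one batch from each $\tilde L_k$ with $k\ge 1$, all of which are within $\epsilon$ of $n-2$ — hence every eigenvalue $\lambda\ne 0,n-1$ satisfies $|\lambda-(n-2)|<\epsilon$. The only mild subtlety, and the step I'd expect to need the most care, is making sure no eigenvalue equal to $0$ or $n-1$ sneaks out of some $\tilde L_k$ with $k\ge 1$ in a way that would invalidate the phrasing: since those eigenvalues are within $\epsilon$ of $n-2$ and $n-2\notin\{0,n-1\}$ with gap $\min(n-2, 1)$, taking $\epsilon$ below this gap (which we may, since the statement is ``for every $\epsilon$'') rules this out, so the exceptional values $0$ and $n-1$ genuinely come only from the $k=0$ block. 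Everything else is the routine asymptotic bookkeeping already packaged in Lemmas~\ref{lemma:limit_of_qbinoms}, \ref{lemma:qbinom_asymptotics} and \ref{lemma:coeffs_limit}.
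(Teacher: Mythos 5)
There is a genuine error in the middle of your argument, and it changes the nature of the perturbation you need to control. You claim that for $i<j$ the entries $-c_{ijk}\binom{n-i}{j-i}_q^{-1}$ of $\tilde L_k$ are $O(q^{-1})$, apparently by conflating $c_{ijk}\binom{n-i}{j-i}_q^{-1}$ with $\binom{n-i}{j-i}_q^{-1}$. Lemma~\ref{lemma:coeffs_limit} says precisely that the product $c_{ijk}\binom{n-i}{j-i}_q^{-1}$ equals $1+O(q^{-1})$, so the strictly-upper entries of $\tilde L_k$ tend to $-1$, not to $0$. Consequently your decomposition $\tilde L_k=(n-2)I+E_k(q)$ with $\|E_k(q)\|\to 0$ is false: $E_k(q)$ converges entrywise to the strictly upper-triangular matrix with $-1$'s above the diagonal, whose norm stays bounded away from $0$, and Gershgorin/Bauer--Fike applied to the split $(n-2)I+E_k(q)$ gives only $|\lambda-(n-2)|\le n-2k$, which is useless.

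The conclusion is nonetheless salvageable, and this is exactly what the paper does: the entrywise limit of $\tilde L_k$ is an \emph{upper triangular} matrix with constant diagonal $n-2$, so its spectrum is $\{n-2\}$ (with full multiplicity). Since the matrices $\tilde L_k$ have size $n-2k+1$ independent of $q$ and converge entrywise to this upper triangular matrix, continuity of eigenvalues (continuity of the roots of the characteristic polynomial in its coefficients) gives that every eigenvalue of $\tilde L_k$ tends to $n-2$ as $q\to\infty$. In other words, you should not try to make the perturbation small in norm; you should instead recognize that the \emph{limit matrix} is upper triangular with a single diagonal value, and invoke eigenvalue continuity. The rest of your proof --- the treatment of the $k=0$ block, the assembly over finitely many $k$, and the observation that $0$ and $n-1$ are bounded away from $n-2$ --- is correct and matches the paper.
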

\begin{proof}
By Theorem \ref{thm:diagonal_representation}, each eigenvalue of $\lap{0}{n}{q}$ is an eigenvalue of the matrix $\tilde{L}_k$, for some $0\leq k\leq \lfrac{n}{2}$.
It is easy to check that the eigenvalues of $\tilde{L}_0$ are $0$ with multiplicity $1$, and $n-1$ with multiplicity $n-2$. Moreover, for all $1\leq k\leq \lfrac{n}{2}$, by Lemma \ref{lemma:limit_of_qbinoms} and Lemma \ref{lemma:coeffs_limit}, the matrix $\tilde{L}_k$ tends element-wise to the upper triangular matrix:
\[
    \lim_{q\to\infty} (\tilde{L}_k)_{i j}=
    \begin{cases}
    n-2 & \text{ if } i=j,\\
    -1 & \text{ if } i<j,\\
    0 & \text{ if } i>j.
    \end{cases}
\]
Therefore, as $q$ tends to infinity, all eigenvalues of $\tilde{L}_k$ tend to $n-2$. 
\end{proof}

Another immediate consequence of Theorem \ref{thm:diagonal_representation} is the following bound on the number of distinct eigenvalues of $\lap{0}{n}{q}$.

\begin{corollary}\label{cor:distinct_eigens}
For every prime power $q\geq 2$, the number of distinct eigenvalues of $\lap{0}{n}{q}$ is at most $\lfrac{n^2}{4}+2$.
\end{corollary}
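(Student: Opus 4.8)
The plan is to count the total number of distinct eigenvalues of $\lap{0}{n}{q}$ by using the block decomposition from Theorem \ref{thm:diagonal_representation}. Since $\lap{0}{n}{q}$ is similar to a block diagonal matrix with blocks $L_k = I \otimes \tilde{L}_k$ for $0 \leq k \leq \lfrac{n}{2}$, its set of eigenvalues is the union of the eigenvalue sets of the matrices $\tilde{L}_k$. Each $\tilde{L}_k$ is an $(n-2k+1)\times(n-2k+1)$ matrix, so it has at most $n-2k+1$ distinct eigenvalues. Hence the number of distinct eigenvalues of $\lap{0}{n}{q}$ is at most $\sum_{k=0}^{\lfloor n/2 \rfloor} (n-2k+1)$.

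The next step is simply to evaluate this sum and check it equals $\lfrac{n^2}{4}+2$. For $n$ even, say $n=2\ell$, the terms are $n+1, n-1, \ldots, 1$ for $k=0,\ldots,\ell$, i.e.\ the odd integers $1,3,\ldots,2\ell+1$, which sum to $(\ell+1)^2 = (n/2+1)^2 = n^2/4 + n + 1$. Hmm, that does not match — so the bound I should really use is sharper than "at most the size of each block." Let me reconsider: one must observe that $0$ and $n-1$ are eigenvalues of $\tilde{L}_0$ and may be shared (in fact $n-2$ is also shared across many blocks, and for the odd/even parity reasons in Theorem \ref{thm:main_result} there is extra overlap), so a naive sum overcounts. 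The correct approach is to instead use that for $k \geq 1$ all eigenvalues of $\tilde{L}_k$ cluster near $n-2$, while they are genuinely distinct from $0$ and $n-1$, and to count more carefully: the eigenvalue $n-2$ itself is counted once across all relevant blocks, and each $\tilde{L}_k$ for $k \geq 1$ contributes at most $\lceil (n-2k+1)/2 \rceil$ eigenvalues distinct from $n-2$ on each side. This matches the structure $\mathcal{J}_k$ in Theorem \ref{thm:main_result}, whose size is $\lfloor (n-2k+1)/2 \rfloor$.

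So the actual bound proceeds as follows. From $\tilde{L}_0$ we get the two eigenvalues $0$ and $n-1$. For each $1 \leq k \leq \lfrac{n}{2}$, the matrix $\tilde{L}_k$ has at most $n-2k+1$ distinct eigenvalues, but we reorganize the count: summing $n-2k+1$ over $k = 1, \ldots, \lfloor n/2 \rfloor$ and adding the $2$ from $\tilde{L}_0$, then subtracting the overcounting of the common value $n-2$, yields exactly $\lfrac{n^2}{4}+2$. Concretely, $2 + \sum_{k=1}^{\lfloor n/2\rfloor}(n-2k+1)$: for $n = 2\ell$ this is $2 + \sum_{k=1}^{\ell}(2\ell-2k+1) = 2 + (2\ell-1) + (2\ell-3) + \cdots + 1 = 2 + \ell^2 = n^2/4 + 2$; for $n = 2\ell+1$ this is $2 + \sum_{k=1}^{\ell}(2\ell-2k+2) = 2 + (2\ell) + (2\ell-2) + \cdots + 2 = 2 + \ell(\ell+1) = 2 + (n^2-1)/4 = \lfrac{n^2}{4}+2$. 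Since the eigenvalues of $\lap{0}{n}{q}$ are contained in the union of these sets, the total count is at most $\lfrac{n^2}{4}+2$.

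The main subtlety — and the only place where care is needed — is the bookkeeping: one must avoid double-counting eigenvalues shared between blocks and simultaneously avoid an off-by-one error between $\lceil \cdot \rceil$ and $\lfloor \cdot \rfloor$ in the block sizes. I expect this to be the part requiring the most attention, though it is entirely elementary; no asymptotic input is needed for the upper bound (that will be needed only for the matching lower bound, i.e.\ the first part of Corollary \ref{cor:pap0case}, proved later via Theorem \ref{thm:main_result}).
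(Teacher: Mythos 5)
Your final computation, $2 + \sum_{k=1}^{\lfloor n/2\rfloor}(n-2k+1) = \lfloor n^2/4\rfloor + 2$, is exactly the paper's proof, and the key correct observation --- that $\tilde{L}_0$ contributes only the two distinct eigenvalues $0$ and $n-1$ rather than one per row --- is what makes it work. However, the reasoning you give on the way there is confused in ways worth flagging. First, you initially took $\tilde{L}_0$ to be $(n-2\cdot 0+1)\times(n-2\cdot 0+1) = (n+1)\times(n+1)$; it is in fact $(n-1)\times(n-1)$ (see the statement of Theorem \ref{thm:diagonal_representation}). Second, and more importantly, the reason the naive bound fails is \emph{not} that eigenvalues are ``shared'' between blocks or that one must ``subtract the overcounting of the common value $n-2$'' --- a valid upper bound cannot be tightened by subtracting an overlap unless that overlap is proved to occur for every $q\geq 2$, which is not established here. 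In fact your concrete computation performs no subtraction at all: the bound $2 + \sum_{k=1}^{\lfloor n/2\rfloor}(n-2k+1)$ already equals $\lfloor n^2/4\rfloor + 2$ on the nose, precisely because $\tilde{L}_0$ has exactly two distinct eigenvalues. The digression about eigenvalues of $\tilde{L}_k$ ``clustering near $n-2$'' and the sizes of the sets $\mathcal{J}_k$ is also beside the point: those facts are asymptotic in $q$, while this corollary must hold for every prime power $q\geq 2$, and the paper's proof correctly uses no asymptotic information at all. So: strip out the overlap/clustering narrative, state that $\tilde{L}_0$ is an $(n-1)\times(n-1)$ matrix with exactly two distinct eigenvalues and each $\tilde{L}_k$ ($k\geq 1$) is $(n-2k+1)\times(n-2k+1)$ hence has at most $n-2k+1$ distinct eigenvalues, and the arithmetic you wrote at the end finishes the argument.
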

\begin{proof}
    The number of distinct eigenvalues of $\tilde{L}_0$ is exactly $2$.
    For $1\leq k\leq \lfrac{n}{2}$, the number of distinct eigenvalues of $\tilde{L}_k$ is at most $n-2k+1$ (since $\tilde{L}_k$ is an $(n-2k+1)\times (n-2k+1)$ matrix). Therefore, by Theorem \ref{thm:diagonal_representation}, the number of distinct eigenvalues of $\lap{0}{n}{q}$ is at most
    \[
        2 + \sum_{k=1}^{\lfrac{n}{2}} (n-2k+1) = \lfrac{n^2}{4}+2.
    \]
\end{proof}

In order to prove our main result, Theorem \ref{thm:main_result}, we need a more detailed study of the spectra of the matrices $\tilde{L}_k$, which is presented in the next section.

\section{Spectra of the matrices $\tilde{L}_k$}
\label{sec:spectra}

For $m,r\in\mathbb{N}$, let $\mathcal{M}_{m,r}(x)$ be the set consisting of all matrices $M=M(x)\in \Rea^{m\times m}$ depending on a variable $x$ that satisfy
\[
    M_{ij}=\begin{cases}
    0 & \text{ if } i=j,\\
    -1+\bigoh{x^{-1}}& \text{ if } i<j,\\
    -x^{-r(i-j)} \left(1+\bigoh{x^{-1}}\right) & \text{ if } i>j.
    \end{cases}
\]

Note that, by Lemma \ref{lemma:limit_of_qbinoms} and Lemma \ref{lemma:coeffs_limit}, we have $\tilde{L}_k-(n-2)I\in \mathcal{M}_{n-2k+1,k}(q)$ for all $1\leq k\leq \lfrac{n}{2}$.

First, we estimate the coefficients of the characteristic polynomial of a matrix in $\mathcal{M}_{m,r}(x)$:
\begin{lemma}\label{lemma:matrix_char_poly}
Let $M\in\mathcal{M}_{m,r}(x)$, and let $p(t)=\det(t I-M)=t^m+a_{m-1}t^{m-1}+\cdots + a_0$ be the characteristic polynomial of $M$. Then, $a_{m-1}=0$, and for $0\leq i<m-1$, 
\[
    a_i= 
        (-1)^{\frac{m-i}{2}} \binom{(m+i)/2}{i} x^{-r\cdot \frac{m-i}{2}}        \left(1+\bigoh{x^{-1}}\right)
\]
if $m-i$ is even, and
\[
       a_i= (-1)^{\frac{m-i-3}{2}} (m-i-1)\binom{(m+i-1)/2}{i} x^{-r \cdot \frac{m-i+1}{2}} (1+\bigoh{x^{-1}})
\]
if $m-i$ is odd.
\end{lemma}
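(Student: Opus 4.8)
The plan is to expand $p(t) = \det(tI - M)$ as a sum over permutations and identify, for each coefficient $a_i$, the dominant term in powers of $x^{-1}$. Recall $a_i = (-1)^{m-i} e_{m-i}(M)$, where $e_{m-i}(M)$ is the sum of all principal $(m-i)\times(m-i)$ minors of $M$. Fix a subset $S\subseteq\{1,\dots,m\}$ with $|S| = s := m-i$, and expand the principal minor $\det(M_S)$ as $\sum_{\pi} \operatorname{sgn}(\pi)\prod_{j\in S} M_{j,\pi(j)}$ over permutations $\pi$ of $S$. Since the diagonal entries of $M$ vanish, only fixed-point-free $\pi$ contribute. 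For such a $\pi$, write it as a product of cycles; a descent $j > \pi(j)$ contributes a factor of order $x^{-r(j-\pi(j))}$, while an ascent $j < \pi(j)$ contributes $-1 + O(x^{-1})$. In any cycle, the sum of $(j-\pi(j))$ over its elements is zero, so the total number of ``descent-steps'' (counted with multiplicity $j-\pi(j)$) equals the total number of ``ascent-steps''; hence the $x$-order of $\prod_{j\in S}M_{j,\pi(j)}$ is $x^{-r\cdot w(\pi)}$ where $w(\pi) = \sum_{j:\,j>\pi(j)}(j-\pi(j))$, and this is minimized (order largest) exactly when every descent has size $1$, i.e. every cycle, read cyclically, goes up by single steps and then drops back. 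The minimal possible value of $w(\pi)$ over fixed-point-free permutations of an $s$-element set is $\lceil s/2\rceil$: an $s$-set partitioned into $2$-cycles (if $s$ even) achieves $w = s/2$, and if $s$ is odd the cheapest configuration is $(s-3)/2$ transpositions plus one $3$-cycle $a < b < c \mapsto a$, of cost $(s-3)/2 + 2 = (s+1)/2$.

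Next I would count, to leading order, how many $(S,\pi)$ pairs realize the minimum, carefully tracking signs. When $s = m-i$ is even: the dominant contributions come from choosing $S$ and a perfect matching of $S$ into consecutive-difference pairs $\{a,a+1\}$ — equivalently a matching of $s/2$ disjoint dominoes on a path, together with $i$ leftover points. A standard lattice-path/stars-and-bars count gives $\binom{(m+i)/2}{i}$ such configurations (place $i$ singletons and $s/2$ dominoes in a row). Each transposition $\{a,a+1\}$ contributes $M_{a+1,a}M_{a,a+1} = (-x^{-r})(-1)(1+O(x^{-1})) = x^{-r}(1+O(x^{-1}))$, and $\operatorname{sgn}$ of a product of $s/2$ transpositions is $(-1)^{s/2}$; combined with the overall $(-1)^{m-i} = (-1)^s = 1$ and $e_{m-i}$'s internal sign bookkeeping, one lands on $a_i = (-1)^{(m-i)/2}\binom{(m+i)/2}{i}x^{-r(m-i)/2}(1+O(x^{-1}))$. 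The case $i = m-1$, i.e. $s = 1$, is degenerate: no fixed-point-free permutation of a singleton exists, so $a_{m-1} = 0$. When $s = m-i$ is odd, the leading configurations are $(s-3)/2$ consecutive-dominoes plus one ``consecutive triple'' $\{a,a+1,a+2\}$ carrying the $3$-cycle of cost $2$; I would count these and convert to the claimed $\binom{(m+i-1)/2}{i}$ with the extra linear factor $(m-i-1)$ coming from the number of ways a block of that combined size sits among the $i$ singletons (the $3$-cycle block behaves like a domino-plus-one), and the sign is $(-1)^{(m-i-3)/2}$ after accounting for $\operatorname{sgn}$ of a $3$-cycle being $+1$ and $\operatorname{sgn}$ of $(s-3)/2$ transpositions being $(-1)^{(s-3)/2}$, plus the entry-signs.

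The main obstacle I anticipate is precisely this combinatorial bookkeeping: (i) verifying that no other permutation type competes at the leading order — in particular ruling out longer cycles with all unit descents but extra total cost, and checking that non-consecutive domino pairs are genuinely lower order (this follows since a transposition $\{a,b\}$ with $b - a \ge 2$ costs $x^{-r(b-a)}$ with $r(b-a) > r$); and (ii) getting the binomial-coefficient counts and the linear factor $(m-i-1)$ exactly right, including boundary effects near $1$ and $m$. I would organize (i) as a clean lemma: for a fixed-point-free $\pi$, $w(\pi) \ge \lceil |{\rm supp}(\pi)|/2\rceil$ with equality iff $\pi$ is a product of consecutive transpositions and at most one consecutive $3$-cycle; then the whole estimate reduces to enumerating placements of these blocks, which is the ``path with dominoes and one tromino'' count. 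One subtlety to handle with care: distinct $(S,\pi)$ can give the same product of matrix entries only up to $O(x^{-1})$ corrections, so I must argue the leading coefficients genuinely add rather than cancel — they do, because all the realizing configurations carry the same sign $(-1)^{\lfloor(m-i)/2\rfloor}$ (resp. its odd-case analogue), which is the computation sketched above.
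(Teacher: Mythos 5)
Your proposal is correct and takes essentially the same route as the paper: expand the characteristic polynomial over permutations (you phrase this via sums of principal minors, the paper via a direct Leibniz expansion of $\det(tI-M)$ over $S_m$, which is the same computation), identify the dominant contribution for each $a_i$ by minimizing the quantity $w(\pi)=\sum_{j>\pi(j)}(j-\pi(j))$ over permutations with $i$ fixed points, show the minimizers are exactly products of consecutive transpositions plus at most one consecutive $3$-cycle, count them via a domino/tromino tiling argument, and check that all minimizers carry the same sign. This is precisely the structure of the paper's proof (its Lemma~\ref{lemma:permutations} and Corollary~\ref{cor:permutations} are the clean lemma you said you would isolate), so the key ideas, the lower bound $\lceil(m-i)/2\rceil$, the characterization of minimizers, the binomial counts, and the sign bookkeeping all coincide.
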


For the proof of Lemma \ref{lemma:matrix_char_poly}, we will need the following auxiliary results. 
Let $S_m$ be the group of permutations on $m$ elements. 
For a permutation $\pi\in S_m$, let
\[
    c(\pi)= \sum_{\substack{i\in [m]:\\ \pi(i)>i}} (\pi(i)-i).
\]
Let
\[
    c_{\ell,m}= \min\{ c(\pi):\, \pi\in S_m,\, \pi
    \text{ has $\ell$ fixed points}\}
\]
and
\[
    \mathcal{C}_{\ell,m}= \{ \pi\in S_m :\, \pi \text{ has $\ell$ fixed points and } c(\pi)=c_{\ell,m}\}.
\]

\begin{lemma}\label{lemma:permutations}
Let $m\geq 2$ be an integer, and let $\ell\leq m$. Then,
\[
    c_{\ell,m}= \ufrac{m-\ell}{2}.
\]
Moreover, if $m-\ell$ is even, $\mathcal{C}_{\ell,m}$ consists of all the permutations whose disjoint cycle decomposition is of the form
\[
     (i_1,i_1+1) (i_2,i_2+1)\cdots (i_{\frac{m-\ell}{2}}, i_{\frac{m-\ell}{2}}+1)
\]
where $i_1,\ldots,i_{\frac{m-\ell}{2}}\in[m]$, and, if $m-\ell$ is odd, then $\mathcal{C}_{\ell,m}$ consists of all the permutations whose disjoint cycle decomposition is of the form
\[
    (j,j+1,j+2)(i_1,i_1+1) (i_2,i_2+1)\cdots (i_{\frac{m-\ell-3}{2}}, i_{\frac{m-\ell-3}{2}}+1),
\]
or
\[
    (j+2,j+1,j)(i_1,i_1+1) (i_2,i_2+1)\cdots (i_{\frac{m-\ell-3}{2}}, i_{\frac{m-\ell-3}{2}}+1),
\]
where $j,i_1,\ldots,i_{\frac{m-\ell-3}{2}}\in[m]$.
\end{lemma}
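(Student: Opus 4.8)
The plan is to reduce the problem to a purely combinatorial optimization over permutations. First I would record the elementary identity that, for any $\pi\in S_m$,
\[
c(\pi)=\sum_{i:\,\pi(i)>i}(\pi(i)-i)=\sum_{i:\,\pi(i)<i}(i-\pi(i)),
\]
since $\sum_i(\pi(i)-i)=0$; so $c(\pi)$ is exactly the ``total positive displacement'' of $\pi$, and it is invariant under inversion. This immediately forces a few structural facts: $c(\pi)=0$ iff $\pi=\mathrm{id}$, and more generally a cycle moving a set of positions contributes to $c(\pi)$ an amount equal to the sum of forward jumps within that cycle. I would then observe that $c$ is additive over the disjoint cycles of $\pi$, so it suffices to understand the minimum positive displacement of a single $k$-cycle on a set of $k$ positions (with no fixed points), and then assemble.

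The key step is the single-cycle estimate: if $\pi$ is a $k$-cycle ($k\geq 2$) supported on positions forming a set $S\subset[m]$, then $c(\pi)\geq \lceil k/2\rceil$, with equality characterized precisely. For a transposition $(a,b)$ with $a<b$ one has $c=b-a\geq 1$, with equality iff $b=a+1$; for a $3$-cycle the two orientations $(a,b,c)$ and $(c,b,a)$ with $a<b<c$ give $c(\pi)=(c-a)$ or $(b-a)+(c-b)=c-a$ in one case and a similar computation in the other, and the minimum over all $3$-cycles on three positions is $2$, attained exactly by $(j,j+1,j+2)$ and its inverse. For $k\geq 4$ I would argue that one can always do at least as well by splitting into shorter cycles: a $k$-cycle has $c(\pi)\geq\lceil k/2\rceil$, but since $\lceil k/2\rceil$ for a single $k$-cycle is never strictly better than using $\lfloor k/2\rfloor$ disjoint adjacent transpositions (which move $2\lfloor k/2\rfloor$ points at cost $\lfloor k/2\rfloor$), cycles of length $\geq 4$ never appear in an optimal permutation. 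This is the combinatorial heart and the main obstacle: one must show cleanly both the lower bound $c(\pi)\geq\lceil k/2\rceil$ for a $k$-cycle (e.g.\ by noting that among the $k$ moved positions, at least $\lceil k/2\rceil$ of them satisfy $\pi(i)>i$, each contributing at least $1$) and that replacing a long cycle by adjacent transpositions strictly decreases, or does not increase, the cost while fixing no new points beyond the parity-forced one.

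Having established that an optimal $\pi$ with $\ell$ fixed points is a disjoint union of adjacent transpositions (and, if $m-\ell$ is odd, exactly one adjacent $3$-cycle to absorb the leftover point), the count $c_{\ell,m}=\lceil (m-\ell)/2\rceil$ follows: if $m-\ell$ is even we need $(m-\ell)/2$ adjacent transpositions, each costing $1$; if $m-\ell$ is odd we need $(m-\ell-3)/2$ adjacent transpositions plus one adjacent $3$-cycle costing $2$, total $(m-\ell-3)/2+2=(m-\ell+1)/2=\lceil(m-\ell)/2\rceil$. Finally I would verify the characterization of $\mathcal{C}_{\ell,m}$ by checking that every such product does have exactly $\ell$ fixed points and cost exactly $c_{\ell,m}$ (the supports $\{i_t,i_t+1\}$ and $\{j,j+1,j+2\}$ must be pairwise disjoint, which is automatic since these are the moved positions), and conversely that any optimal permutation must have all its nontrivial cycles of this adjacent form by the single-cycle analysis above. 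The assembly and verification are routine; the one step requiring genuine care is ruling out long cycles and pinning down the $3$-cycle case, which I would handle by the explicit displacement computations sketched.
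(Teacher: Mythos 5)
Your route via cycle-additivity is genuinely different from the paper's. The paper never decomposes into cycles: it writes
$2c(\pi)=\sum_{i:\pi(i)\neq i}|\pi(i)-i|\geq m-\ell$
directly, and then reads off the equality condition (all displacements $\pm 1$, or all but one $\pm 1$ and one $\pm 2$ in the odd case), from which the adjacent-transposition / adjacent-$3$-cycle description of $\mathcal{C}_{\ell,m}$ is immediate. That global argument is shorter and sidesteps the two places where your sketch has real gaps.

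First, your parenthetical justification of the per-cycle bound is wrong. It is not true that a $k$-cycle has at least $\lceil k/2\rceil$ positions with $\pi(i)>i$: the decreasing cycle $(k,k-1,\ldots,1)$, i.e.\ $k\to k-1\to\cdots\to 1\to k$, has only one such position. The bound $c(\text{$k$-cycle})\geq\lceil k/2\rceil$ is still true, but the correct reason is the same identity the paper uses, specialized to the cycle: $2c=\sum|\pi(i)-i|\geq k$, and since the left side is even, $2c\geq k+1$ when $k$ is odd.

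Second, and more importantly, the characterization of $\mathcal{C}_{\ell,m}$ requires you to rule out cycles of length $\geq 4$ in an optimal permutation, and your ``replace a long cycle by adjacent transpositions'' argument does not do this. The bound $c(\text{$k$-cycle})\geq\lceil k/2\rceil$ alone is not enough: for example a $5$-cycle has lower bound $3$, and a $3$-cycle plus a $2$-cycle on the same five points also costs $2+1=3$, so the naive comparison only shows long cycles are not \emph{worse}, not that they cannot occur. What you actually need is the \emph{strict} inequality $c(\text{$k$-cycle})>\lceil k/2\rceil$ for $k\geq 4$, equivalently that no single cycle of length $\geq 4$ has all displacements in $\{\pm 1\}$ (even $k$) or all in $\{\pm 1\}$ with exactly one $\pm 2$ (odd $k$). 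That is a separate, if elementary, structural fact about integer-permutations with small displacements (the smallest moved point $a$ must satisfy $\pi(a),\pi^{-1}(a)\in\{a+1,a+2\}$, and running the cases closes the cycle after $2$ or $3$ steps). You flag this as the ``combinatorial heart,'' correctly, but the replacement heuristic does not supply it, and it also silently changes the number of fixed points when $k$ is odd. The paper avoids all of this by not decomposing into cycles at all.

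Your $3$-cycle computation and the final assembly/count are fine, and the identity $c(\pi)=\sum_{\pi(i)<i}(i-\pi(i))$ with which you open is exactly the paper's starting point.
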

\begin{proof}
Let $\pi\in S_m$ be a permutation with exactly $\ell$ fixed points. Then, since $\sum_{i=1}^m \pi(i)=\sum_{i=1}^m i$, we have
\[
0 = \sum_{i=1}^m (\pi(i)-i)= \sum_{\substack{i\in[m]:\\ \pi(i)>i}} (\pi(i)-i) +\sum_{\substack{i\in[m]:\\ \pi(i)<i}} (\pi(i)-i).
\]
So
\[
c(\pi)=\sum_{\substack{i\in[m]:\\ \pi(i)<i}} (i-\pi(i)).
\]
We obtain
\[
2\cdot c(\pi)= 
\sum_{\substack{i\in[m]:\\ \pi(i)>i}} (\pi(i)-i) 
+
\sum_{\substack{i\in[m]:\\ \pi(i)<i}} (i-\pi(i)) 
=
\sum_{\substack{i\in[m]:\\ \pi(i)\neq i}} |\pi(i)-i|.
\]
Since $|\pi(i)-i|\geq 1$ for all non-fixed points $i$, we obtain
\[
    2\cdot c(\pi)\geq m-\ell.
\]
Therefore, since $c(\pi)$, is an integer, we obtain
\begin{equation}\label{eq:cpi}
    c(\pi)\geq \ufrac{m-\ell}{2}.
\end{equation}
Moreover, when $m-\ell$ is even, equality in \eqref{eq:cpi} is obtained exactly when $|\pi(i)-i|=1$ for all non fixed points $i\in[m]$. Therefore $c_{\ell,m}=\frac{m-\ell}{2}$, and $\mathcal{C}_{\ell,m}$ consists of all the permutations whose disjoint cycle decomposition is of the form
\[
     (i_1,i_1+1) (i_2,i_2+1)\cdots (i_{\frac{m-\ell}{2}}, i_{\frac{m-\ell}{2}}+1)
\]
where $i_1,\ldots,i_{\frac{m-\ell}{2}}\in[m]$.

If $m-\ell$ is odd, then equality in \eqref{eq:cpi}
is obtained exactly when $|\pi(i)-i|=1$ for all non fixed points $i\in[m]$ except exactly one, which satisfies $|\pi(i)-i|=2$. Therefore $c_{\ell,m}=\frac{m-\ell+1}{2}=\ufrac{m-\ell}{2}$, and $\mathcal{C}_{\ell,m}$ consists of all the permutations whose disjoint cycle decomposition is of the form
\[
    (j,j+1,j+2)(i_1,i_1+1) (i_2,i_2+1)\cdots (i_{\frac{m-\ell-3}{2}}, i_{\frac{m-\ell-3}{2}}+1),
\]
or
\[
    (j+2,j+1,j)(i_1,i_1+1) (i_2,i_2+1)\cdots (i_{\frac{m-\ell-3}{2}}, i_{\frac{m-\ell-3}{2}}+1),
\]
where $j,i_1,\ldots,i_{\frac{m-\ell-3}{2}}\in[m]$.

\end{proof}

As a consequence, we obtain:
\begin{corollary}\label{cor:permutations}
Let $m\geq 2$ be an integer, and let $\ell\leq m$. Then, if $m-\ell$ is even,
\[
    |\mathcal{C}_{\ell,m}|=\binom{(m+\ell)/2}{\ell},
\]
and, if $m-\ell$ is odd, then
\[
    |\mathcal{C}_{\ell,m}|=\binom{(m+\ell-1)/2}{\ell}\cdot (m-\ell-1).
\]
\end{corollary}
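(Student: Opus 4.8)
\textbf{Proof proposal for Corollary \ref{cor:permutations}.}

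The plan is to count the permutations in $\mathcal{C}_{\ell,m}$ directly using the structural description provided by Lemma \ref{lemma:permutations}. In both the even and odd cases, a permutation in $\mathcal{C}_{\ell,m}$ is determined by a choice of ``adjacent transpositions'' $(i_s,i_s+1)$ (and, in the odd case, one extra $3$-cycle on a block of three consecutive integers $\{j,j+1,j+2\}$, together with one of the two possible cyclic orientations of that $3$-cycle). The constraint that all these cycles be disjoint means the chosen consecutive blocks must be pairwise disjoint. So in both cases the count reduces to: in how many ways can one tile a subset of $[m]$ by a prescribed multiset of consecutive blocks, leaving $\ell$ points untiled?

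First I would treat the even case. Here we must place $t := (m-\ell)/2$ disjoint dominoes (blocks of two consecutive positions) in $[m]$, with the remaining $m - 2t = \ell$ positions left as fixed points; the order of the dominoes does not matter and there is no orientation choice (a transposition equals its inverse). This is the classical ``tilings of a $1\times m$ strip by $t$ dominoes and $\ell$ monominoes'' count, which equals $\binom{t+\ell}{t} = \binom{(m+\ell)/2}{t} = \binom{(m+\ell)/2}{\ell}$ since $t+\ell = (m+\ell)/2$. I would justify this either by the standard bijection (read the strip left to right; a tiling corresponds to an arrangement of $t$ indistinguishable ``D'' symbols and $\ell$ indistinguishable ``M'' symbols in a row of length $t+\ell$), or by a quick generating-function / induction argument. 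Since the disjoint cycle decompositions in Lemma \ref{lemma:permutations} are in bijection with such tilings, this gives $|\mathcal{C}_{\ell,m}| = \binom{(m+\ell)/2}{\ell}$.

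For the odd case, $m-\ell$ is odd, write $t := (m-\ell-3)/2$ for the number of dominoes and note we additionally place one ``tromino'' (a block of three consecutive positions) carrying one of $2$ orientations; the leftover $m - 2t - 3 = \ell$ positions are fixed. By the same tiling correspondence, the number of ways to arrange, left to right, $t$ indistinguishable dominoes, $\ell$ indistinguishable monominoes, and one \emph{distinguished} tromino in a strip of length $m$ is: choose the linear position of all the pieces among $t + \ell + 1$ slots, then single out which piece is the tromino. More cleanly: the number of unoriented such tilings is the multinomial-type count $\binom{t+\ell}{t}\cdot(t+\ell+1) / 1$ — concretely, place the $t$ dominoes and $\ell$ monominoes ($\binom{t+\ell}{t}$ arrangements of a length-$(t+\ell)$ word), which creates $t+\ell+1$ gaps (including the two ends), and insert the single tromino into one of them, giving $\binom{t+\ell}{t}(t+\ell+1)$ unoriented tilings. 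Since $t+\ell = (m+\ell-3)/2$, we have $t+\ell+1 = (m+\ell-1)/2$ and $\binom{t+\ell}{t} = \binom{(m+\ell-3)/2}{t} = \binom{(m+\ell-3)/2}{\ell}$; hence the unoriented count is $\binom{(m+\ell-3)/2}{\ell}\cdot (m+\ell-1)/2$. Finally, each unoriented tiling yields $2$ elements of $\mathcal{C}_{\ell,m}$ (the two orientations of the $3$-cycle), so $|\mathcal{C}_{\ell,m}| = \binom{(m+\ell-3)/2}{\ell}\cdot (m+\ell-1)$. To match the stated form $\binom{(m+\ell-1)/2}{\ell}\cdot (m-\ell-1)$, I would use the elementary identity $\binom{(m+\ell-1)/2}{\ell} = \binom{(m+\ell-3)/2}{\ell} + \binom{(m+\ell-3)/2}{\ell-1}$ together with $(m+\ell-1)/2 \cdot$-type bookkeeping, or more directly re-derive the count by inserting the tromino first: there are $m-2$ choices for the starting position $j$ of the block $\{j,j+1,j+2\}$, then $\ell' := \ell$ monominoes and $t$ dominoes must tile the remaining $m-3$ positions, which split into the $j-1$ positions to the left and the $m-j-2$ to the right... this "insert tromino first'' route is messier, so I'd prefer the gap-insertion argument plus a final Pascal-identity simplification.

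The main obstacle is purely bookkeeping: making the bijection between ``disjoint-cycle forms as in Lemma \ref{lemma:permutations}'' and ``strip tilings'' airtight (in particular checking that distinct tilings give distinct permutations and that every permutation of the prescribed cycle type arises — this is immediate because a product of disjoint transpositions/$3$-cycles on consecutive blocks is recovered from its support blocks), and then reconciling the two equivalent closed forms for the odd-case count via a Pascal-type identity. No step is deep; the only place to be careful is the factor of $2$ for orientations in the odd case and the off-by-one in counting gaps (there are $t+\ell+1$, not $t+\ell$, slots for the tromino). I expect the whole proof to be under a page.
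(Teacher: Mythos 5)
Your proposal is correct and follows essentially the same strategy as the paper: encode each cycle decomposition from Lemma \ref{lemma:permutations} as a tiling of a $1\times m$ strip by monominoes and dominoes (plus one tromino and an orientation bit when $m-\ell$ is odd), count the tilings, and multiply by $2$ for the orientation of the $3$-cycle. The only divergence is in how the odd-case tiling count is computed: the paper counts ternary words directly via a multinomial coefficient, while you use a gap-insertion argument to get $\binom{t+\ell}{t}(t+\ell+1)$ with $t=(m-\ell-3)/2$; the two expressions agree, and the reconciliation you leave open is most cleanly done not by Pascal's rule but by the absorption identity $a\binom{a-1}{b}=(a-b)\binom{a}{b}$ with $a=(m+\ell-1)/2$ and $b=\ell$, which gives $\binom{(m+\ell-3)/2}{\ell}(m+\ell-1)=\binom{(m+\ell-1)/2}{\ell}(m-\ell-1)$ in one line.
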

\begin{proof}
    We divide into two cases: if $m-\ell$ is even, then, by Lemma \ref{lemma:permutations}, $\mathcal{C}_{\ell,m}$ is the set of all the permutations that can be written as a product of disjoint cycles consisting of $\ell$ cycles of length $1$ (the $\ell$ fixed points) and $\frac{m-\ell}{2}$ cycles of length $2$, each having the form $(i,i+1)$ for some $i\in[m]$. If we order these cycles in ascending order, we can encode each such permutation as a tiling of an $m\times 1$ board by $\ell$ tiles of length $1$ and $\frac{m-\ell}{2}$ tiles of length $2$ (where two numbers in $[m]$ belong to the same cycle if and only if their corresponding cells in the board are covered by the same tile). We can represent each tiling as a binary word with $\ell$ zeroes (the $1$-tiles) and $\frac{m-\ell}{2}$ ones (the $2$-tiles). Therefore, we obtain
\[
    |\mathcal{C}_{\ell,m}|
    = \binom{\ell+ \frac{m-\ell}{2}}{\ell}
    =\binom{(m+\ell)/2}{\ell}.
\]
Now, assume that $m-\ell$ is odd. Then, by Lemma \ref{lemma:permutations}, $\mathcal{C}_{\ell,m}$ is the set of all the permutations that can be written as a product of disjoint cycles consisting of $\ell$ cycles of length $1$, $\frac{m-\ell-3}{2}$ cycles of length $2$ of the form $(i,i+1)$ and one cycle of length $3$ of the form $(i,i+1,i+2)$ or $(i+2,i+1,i)$.
Ordering these cycles by ascending order, we can encode each permutation as a tiling of an $m\times 1$ board by $\ell$ tiles of length $1$, $\frac{m-\ell-3}{2}$ tiles of length $2$, and $1$ tile of length $3$. Since such tilings do not encode the orientation of the $3$-cycle, $|\mathcal{C}_{\ell,m}|$ is exactly twice the number of such tilings.

We can represent each tiling as a ternary word, with $\ell$ zeroes (the $1$-tiles), $\frac{m-\ell-3}{2}$ ones (the $2$-tiles) and $1$ two (the $3$-tile). Therefore,
\begin{align*}
|\mathcal{C}_{\ell,m}|= 2 \cdot \binom{ \ell + \frac{m-\ell-3}{2}+1}{\ell,\frac{m-\ell-3}{2},1}
&=2 \cdot\binom{(m+\ell-1)/2}{\ell}\cdot \frac{m-\ell-1}{2}
    \\ &=\binom{(m+\ell-1)/2}{\ell}\cdot (m-\ell-1).
\end{align*}
\end{proof}

\begin{proof}[Proof of Lemma \ref{lemma:matrix_char_poly}]
Let $B=tI-M$. 
We have
\[
p(t)=\det(B)= \sum_{\pi\in S_m} \text{sgn}(\pi) \prod_{i=1}^m B_{\pi(i),i}.
\]
Since the variable $t$ appears only in the diagonal entries of $B$, and $B_{ii}=t$ for all $1\leq i\leq m$, we can write $p(t)=\sum_{i=0}^m a_i t^i$, where
\[
a_i=\sum_{\substack{\pi\in S_m:\\ \pi \text{ has $i$ fixed points}}} \text{sgn}(\pi) \prod_{i\in[m]:\, \pi(i)\neq i} B_{\pi(i),i}.
\]
In particular, $a_m=1$ and $a_{m-1}=0$ (since there are no permutations in $S_m$ with exactly $m-1$ fixed points). By the definition of $\mathcal{M}_{m,r}$, we have
\[
    B_{\pi(i),i}=-M_{\pi(i),i}= x^{-r(\pi(i)-i)} (1+O(x^{-1}))
\]
if $\pi(i)>i$, and
\[
    B_{\pi(i),i}=-M_{\pi(i),i}= 1+O(x^{-1})
\]
if $\pi(i)<i$.
Therefore, for $0\leq i< m-1$, we obtain
\[
a_i=\sum_{\substack{\pi\in S_m:\\ \pi \text{ has $i$ fixed points}}} \text{sgn}(\pi)\cdot x^{-r\cdot c(\pi)} (1+O(x^{-1})).
\]
We divide into two cases. If $m-i$ is even, then, by Lemma \ref{lemma:permutations}, we have $c(\pi)\geq \frac{m-i}{2}$ for all $\pi\in S_m$ with $i$ fixed points. Moreover, all the permutations obtaining this bound satisfy $\text{sgn}(\pi)= (-1)^{\frac{m-i}{2}}$. By Corollary \ref{cor:permutations}, there are exactly $\binom{(m+i)/2}{i}$ such permutations. Therefore, we obtain
\[
a_i=(-1)^{\frac{m-i}{2}} \binom{(m+i)/2}{i} x^{-r \cdot \frac{m-i}{2}} (1+O(x^{-1})).
\]
If $m-i$ is odd, then, by Lemma \ref{lemma:permutations}, we have $c(\pi)\geq \frac{m-i+1}{2}$ for all $\pi\in S_m$ with $i$ fixed points. Moreover, all the permutations obtaining this bound satisfy $\text{sgn}(\pi)=(-1)^{\frac{m-i-3}{2}}$. By Corollary \ref{cor:permutations}, there are exactly $(m-i-1)\binom{(m+i-1)/2}{i}$ such permutations. Therefore, we obtain
\[
a_i=(-1)^{\frac{m-i-3}{2}} (m-i-1)\binom{(m+i-1)/2}{i} x^{-r \cdot \frac{m-i+1}{2}} (1+O(x^{-1})).
\]
\end{proof}

In order to compute the roots of the characteristic polynomial, we introduce the following change of variable:
\begin{lemma}\label{lemma:change_of_variables_1}
Let $m\geq 2$ and $r\geq 1$. Let $M\in\mathcal{M}_{m,r}(x)$, and let $p(t)$ be its characteristic polynomial. Let $\alpha=\min\{r/2,1\}$. Then,
\[
    p(s\cdot x^{-\frac{r}{2}}) = x^{-\frac{r m}{2}}\left( F_m(s) + h(s,x)
    \right),
\]
where
\[
    F_m(s)= \sum_{j=0}^{\lfrac{m}{2}} (-1)^j \binom{m-j}{j} s^{m-2j}
\]
and $h(s,x)$ is a polynomial of degree at most $m-2$ in $s$ whose coefficients are in $\bigoh{x^{-\alpha}}$.
\end{lemma}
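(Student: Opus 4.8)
The plan is to substitute $t = s\,x^{-r/2}$ into the expansion $p(t) = \sum_{i=0}^{m} a_i t^i$ and to read off the powers of $x$ term by term, using the asymptotics for the coefficients $a_i$ supplied by Lemma~\ref{lemma:matrix_char_poly}. Concretely, $p(s\,x^{-r/2}) = \sum_{i=0}^{m} a_i\, s^i x^{-ri/2}$, and after multiplying through by $x^{rm/2}$ it suffices to analyze each summand $a_i\, s^i x^{r(m-i)/2}$ separately: I expect the summands that are of order $x^{0}$ to reassemble into $F_m(s)$, and everything of strictly smaller order in $x$ to be collected into $h(s,x)$. The two extreme indices are immediate: $i=m$ gives $a_m s^m = s^m$, which is the $j=0$ term of $F_m$, and $i=m-1$ contributes nothing since $a_{m-1}=0$ by Lemma~\ref{lemma:matrix_char_poly}.

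For an index $0\le i\le m-2$ with $m-i$ even I would set $j = (m-i)/2 \in \{1,\dots,\lfloor m/2\rfloor\}$, so that $i = m-2j$. Using $(m+i)/2 = m-j$, the identity $\binom{m-j}{m-2j} = \binom{m-j}{j}$, and the exponent cancellation $-\tfrac{r(m-i)}{2} - \tfrac{ri}{2} + \tfrac{rm}{2} = 0$, the even-case formula of Lemma~\ref{lemma:matrix_char_poly} becomes
\[
    a_i\, s^i x^{r(m-i)/2} = (-1)^{j}\binom{m-j}{j} s^{m-2j}\bigl(1+\bigoh{x^{-1}}\bigr).
\]
Thus the order-$x^0$ part of this summand is exactly the $j$-th term of $F_m$, while the error is $\bigoh{x^{-1}}$ times $s^{m-2j}$, of $s$-degree $m-2j\le m-2$. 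For $0\le i\le m-2$ with $m-i$ odd I would set $j=(m-i-1)/2\ge 1$, so that $i = m-2j-1$; the analogous bookkeeping, with $\binom{(m+i-1)/2}{i} = \binom{m-j-1}{m-2j-1}$ and the exponents now combining to $x^{-r/2}$, gives
\[
    a_i\, s^i x^{r(m-i)/2} = (-1)^{j-1}\,2j\binom{m-j-1}{m-2j-1}\, s^{m-2j-1}\, x^{-r/2}\bigl(1+\bigoh{x^{-1}}\bigr),
\]
which has $s$-degree $m-2j-1\le m-3$ and coefficient in $\bigoh{x^{-r/2}}$.

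Summing up, the order-$x^0$ parts of the even-index terms together with $s^m$ are precisely $\sum_{j=0}^{\lfloor m/2\rfloor}(-1)^j\binom{m-j}{j}s^{m-2j} = F_m(s)$, and I would take $h(s,x)$ to be the sum of all the leftover pieces: the $\bigoh{x^{-1}}$ errors from the even-index terms (each of $s$-degree at most $m-2$) and the full odd-index terms (each of $s$-degree at most $m-3$, with coefficient $\bigoh{x^{-r/2}}$). Since $\alpha = \min\{r/2,1\}$, for large $x$ we have both $x^{-1}\le x^{-\alpha}$ and $x^{-r/2}\le x^{-\alpha}$, so every coefficient of $h$ lies in $\bigoh{x^{-\alpha}}$ and $\deg_s h\le m-2$, as required. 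The whole argument is essentially bookkeeping of indices and exponents; the only point that needs a second look is the verification that no odd-index term can contribute to the $s^{m-1}$ coefficient --- this would force $m-2j-1 = m-1$, i.e.\ $j=0$, which is excluded --- so that the degree bound $m-2$ for $h$ is genuine and consistent with $a_{m-1}=0$.
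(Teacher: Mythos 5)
Your proof is correct and follows essentially the same route as the paper's: substitute $t = s\,x^{-r/2}$ into $p(t)=\sum a_i t^i$, apply the asymptotics of Lemma~\ref{lemma:matrix_char_poly} term by term, observe that the even-index leading parts together with $t^m$ reassemble into $F_m(s)$ after the reindexing $j=(m-i)/2$, and collect all $\bigoh{x^{-1}}$ and $\bigoh{x^{-r/2}}$ remainders (both $\bigoh{x^{-\alpha}}$) of $s$-degree at most $m-2$ into $h(s,x)$. The only cosmetic difference is that you reindex to $j$ at the term level while the paper does it at the end; the bookkeeping is otherwise identical.
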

\begin{proof}
    Let $0\leq i\leq m-2$ and let $t=s\cdot x^{-\frac{r}{2}}$. By Lemma \ref{lemma:matrix_char_poly}, if $m-i$ is even, we have
\begin{multline*}
    a_i t^i = (-1)^{\frac{m-i}{2}} \binom{(m+i)/2}{i} x^{-\frac{r m}{2}} (1+ O(x^{-1})) s^{i}
    \\
    =(-1)^{\frac{m-i}{2}} \binom{(m+i)/2}{i} x^{-\frac{r m}{2}}s^{i} + O(x^{\frac{-r m}{2}-1}) s^i,
\end{multline*}
and if $m-i$ is odd, then
\begin{multline*}
    a_i t^i = (-1)^{\frac{m-i-3}{2}} (m-i-1)\binom{(m+i-1)/2}{i} x^{-\frac{r (m+1)}{2}} (1+ O(x^{-1})) s^{i}
    \\
    = O(x^{-\frac{rm}{2}-\frac{r}{2}}) s^i. 
\end{multline*}
In addition, $p(t)$ has one extra term: $t^m=x^{-\frac{r m}{2}} s^m$.
Therefore, we can write
\[
    p(t)= p(s\cdot x^{-\frac{r}{2}})= x^{-\frac{r\cdot m}{2}} \left( F_m(s) +h(s,x)\right),
\]
where
\[
    F_m(s)= s^m + \sum_{\substack{0\leq i\leq m-2,\\ m-i \text{ is even}}} (-1)^{\frac{m-i}{2}} \binom{(m+i)/2}{i} s^i,
\]
and $h(s,x)$ is a polynomial of degree at most $m-2$ in $s$, whose coefficients are either in $O(x^{-1})$ or $O(x^{-\frac{r}{2}})$, and therefore are in $O(x^{-\alpha})$.

Finally, using the change of summation index $j=(m-i)/2$, we obtain
\[
   F_m(s)= \sum_{j=0}^{\lfrac{m}{2}} (-1)^j \binom{m-j}{j} s^{m-2j}.
\]
\end{proof}

The polynomials $F_m(s)$ are closely related to the sign-alternating Fibonacci polynomials studied by Donnelly et al. in \cite{donnellysign}. Let
\[
    G_m(s)=\sum_{j=0}^{\lfrac{m}{2}} (-1)^j \binom{m-j}{j} s^{\lfrac{m}{2}-j}
\]
be the $m$-th sign-alternating Fibonacci polynomial. The roots of $G_m(s)$ were determined in \cite{donnellysign}:

\begin{theorem}[Donnelly, Dunkum, Huber, Knupp {\cite[Corollary 3.2]{donnellysign}}]\label{thm:signalternatingfibo}
Let $m\geq 2$. Then, the set of roots of $G_m(s)$ is
\[
    \left\{ 4 \cos^2\left(\frac{j \pi}{m+1}\right) :\,  1\leq j\leq \lfrac{m}{2}\right\}.
\]
\end{theorem}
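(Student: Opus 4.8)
The plan is to identify $G_m$, up to the substitution $s\mapsto s^2$, with a rescaled Chebyshev polynomial of the second kind, and then read off the roots from the classical product formula for those polynomials. The bridge is the polynomial $F_m$ from Lemma~\ref{lemma:change_of_variables_1}: reindexing its defining sum by $j$ alone yields the identity
\[
    F_m(s)=s^{\,m-2\lfloor m/2\rfloor}\,G_m(s^2),
\]
so that $F_m(s)=G_m(s^2)$ when $m$ is even and $F_m(s)=s\,G_m(s^2)$ when $m$ is odd. Hence the roots of $G_m$ are exactly the squares of the nonzero roots of $F_m$, and it suffices to locate the roots of $F_m$.

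To do that, I would first verify, using Pascal's rule $\binom{m+1-j}{j}=\binom{m-j}{j}+\binom{m-j}{j-1}$, that the family $\{F_m\}_{m\ge 0}$ obeys the three-term recurrence $F_{m+1}(s)=s\,F_m(s)-F_{m-1}(s)$ with $F_0(s)=1$ and $F_1(s)=s$. Substituting $s=2\cos\theta$ and invoking $\sin((m+2)\theta)+\sin(m\theta)=2\cos\theta\,\sin((m+1)\theta)$, one checks that $\theta\mapsto\sin((m+1)\theta)/\sin\theta$ satisfies the same recurrence and the same initial data, whence
\[
    F_m(2\cos\theta)=\frac{\sin((m+1)\theta)}{\sin\theta}.
\]
It follows that the zeros of $F_m$ in $(-2,2)$ are precisely the numbers $2\cos\bigl(j\pi/(m+1)\bigr)$ for $1\le j\le m$; these are $m$ distinct real numbers, and since $\deg F_m=m$ they exhaust the roots of $F_m$.

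It remains to assemble the pieces. Because $\cos\bigl((m+1-j)\pi/(m+1)\bigr)=-\cos\bigl(j\pi/(m+1)\bigr)$, the root set of $F_m$ is $\{\pm 2\cos(j\pi/(m+1)):1\le j\le\lfloor m/2\rfloor\}$, together with the extra root $0$ when $m$ is odd, which is exactly the monomial factor $s$ in $F_m(s)=s\,G_m(s^2)$. Squaring the nonzero roots, the roots of $G_m$ are the numbers $4\cos^2\bigl(j\pi/(m+1)\bigr)$ for $1\le j\le\lfloor m/2\rfloor$. These are pairwise distinct, since $j\pi/(m+1)$ runs over distinct values in $(0,\pi/2)$ as $j$ ranges over $1,\dots,\lfloor m/2\rfloor$ (using $\lfloor m/2\rfloor<(m+1)/2$), and $\cos^2$ is injective there; as $\deg G_m=\lfloor m/2\rfloor$, this is the complete root set, which is what the theorem asserts.

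The argument is essentially routine once the substitution $s\mapsto s^2$ and the Chebyshev connection are set up; the only slightly delicate point is the even/odd bookkeeping — in particular, confirming that the spurious zero $s=0$ of $F_m$ in the odd case is precisely absorbed by the factor $s^{\,m-2\lfloor m/2\rfloor}$ and therefore contributes no root of $G_m$, and that the counting of distinct squared roots matches $\deg G_m$ in both parities.
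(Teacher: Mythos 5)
The paper itself gives no proof of this statement: Theorem~\ref{thm:signalternatingfibo} is imported wholesale as a citation to Donnelly, Dunkum, Huber, and Knupp~\cite{donnellysign}, so there is no argument in the paper to compare yours against. What you have written is a correct, self-contained proof of the cited result, and it is the expected one: the relation $F_m(s)=s^{\,m-2\lfloor m/2\rfloor}G_m(s^2)$ is verified directly from the two defining sums, the three-term recurrence $F_{m+1}=sF_m-F_{m-1}$ follows from Pascal's rule as you say, and the substitution $s=2\cos\theta$ identifies $F_m(s)$ with $U_m(s/2)=\sin((m+1)\theta)/\sin\theta$, from which the $m$ distinct roots $2\cos(j\pi/(m+1))$ and hence the roots of $G_m$ fall out. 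Your root count against $\deg G_m=\lfloor m/2\rfloor$ and your observation that the spurious zero $s=0$ in the odd case is exactly the monomial factor and contributes no root of $G_m$ close the gaps. One small remark: note that the paper's Corollary~\ref{cor:signalternatingfibo} runs the same algebraic identity $F_m(s)=s^{\,m-2\lfloor m/2\rfloor}G_m(s^2)$ in the opposite direction, deducing the roots of $F_m$ from the cited roots of $G_m$; your argument makes that corollary, and hence the citation, unnecessary, since you obtain the roots of $F_m$ first via Chebyshev and then transfer to $G_m$. If one wished to streamline the paper's Section~\ref{sec:spectra}, one could replace the citation and Corollary~\ref{cor:signalternatingfibo} with your direct Chebyshev argument for $F_m$ alone, which is all that is ultimately used.
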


As a consequence, we obtain

\begin{corollary}\label{cor:signalternatingfibo}
Let $m\geq 2$, and let
\[
    \mathcal{I}_m= \left\{ \pm 2 \cos\left(\frac{j \pi}{m+1}\right) :\, 1\leq j\leq \lfrac{m}{2}\right\}.
\]  
Then, the set of roots of $F_m(s)$ is $\mathcal{I}_m$ if $m$ is even, and $\mathcal{I}_m\cup\{0\}$ if $m$ is odd.
\end{corollary}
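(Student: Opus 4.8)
The plan is to reduce everything to Theorem~\ref{thm:signalternatingfibo} by recording a direct substitution identity between $F_m$ and $G_m$. Write $\ell = \lfloor m/2 \rfloor$. Comparing the two displayed formulas, the coefficient $(-1)^j\binom{m-j}{j}$ is the same in $F_m$ and in $G_m$, while in $F_m(s)$ the exponent $m-2j$ equals $2(\ell-j)$ when $m$ is even and $2(\ell-j)+1$ when $m$ is odd. Hence
\[
    F_m(s) = \begin{cases} G_m(s^2) & m \text{ even},\\[2pt] s\, G_m(s^2) & m \text{ odd}. \end{cases}
\]
First I would verify this identity term by term (it is immediate from the definitions, since the summation ranges $0\le j\le \lfloor m/2\rfloor$ of $F_m$ and $G_m$ coincide), and record as a consequence that $F_m$ is monic of degree $m$.

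From this identity the root sets follow at once. By Theorem~\ref{thm:signalternatingfibo}, the roots of $G_m$ are exactly the numbers $4\cos^2(j\pi/(m+1))$ for $1\le j\le \ell$. If $m$ is even, then $s$ is a root of $F_m$ if and only if $s^2$ is a root of $G_m$, i.e.\ $s^2 = 4\cos^2(j\pi/(m+1))$ for some $1\le j\le \ell$, i.e.\ $s = \pm 2\cos(j\pi/(m+1))$; this is precisely $\mathcal{I}_m$. If $m$ is odd, then $s$ is a root of $F_m$ if and only if $s=0$ or $s^2$ is a root of $G_m$; moreover $0$ is not itself a root of $G_m$, since $G_m(0) = (-1)^\ell\binom{m-\ell}{\ell} = (-1)^\ell(\ell+1)\neq 0$. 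Hence in this case the root set of $F_m$ is $\mathcal{I}_m\cup\{0\}$.

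As a sanity check I would also observe that for $1\le j\le \ell$ the angle $j\pi/(m+1)$ lies in $(0,\pi/2)$, so $\cos(j\pi/(m+1))$ is strictly positive and strictly decreasing in $j$; consequently the values $\pm 2\cos(j\pi/(m+1))$ are pairwise distinct and none of them is $0$, giving $|\mathcal{I}_m| = m$ when $m$ is even and $|\mathcal{I}_m\cup\{0\}| = m$ when $m$ is odd, in agreement with $\deg F_m = m$. There is no substantial obstacle here: the only point requiring care is the parity bookkeeping in the substitution identity and keeping the summation index ranges of $F_m$ and $G_m$ aligned.
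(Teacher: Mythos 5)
Your argument is correct and is essentially the same as the paper's: both establish the substitution identities $F_m(s)=G_m(s^2)$ (for $m$ even) and $F_m(s)=sG_m(s^2)$ (for $m$ odd) and then invoke Theorem~\ref{thm:signalternatingfibo}. Your extra checks that $0$ is not a root of $G_m$ and that the elements of $\mathcal{I}_m$ are distinct are sound but not needed for the statement as phrased, since it only asks for the set of roots.
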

\begin{proof}
Let $m$ be even. Then $F_m(s)=G_m(s^2)$. Therefore, by Theorem \ref{thm:signalternatingfibo}, the set of roots of $F_m(s)$ is $\mathcal{I}_m$.

Now, let $m$ be odd. Then $F_m(s)=s\cdot G_m(s^2)$. So, by Theorem \ref{thm:signalternatingfibo}, the set of roots of $F_m(s)$ is $\mathcal{I}_m$, plus the additional root $0$.
\end{proof}

In the case when $m$ is odd, we will also use the following change of variables:
\begin{lemma}\label{lemma:change_of_variables_2}
Let $m\geq 3$ be an odd integer, and let $r\geq 1$. Let $M\in\mathcal{M}_{m,r}(x)$, and let $p(t)$ be its characteristic polynomial. Then,
\[
    p(s\cdot x^{-r}) = (-1)^{\frac{m-1}{2}}\cdot  x^{-\frac{r (m +1)}{2}}\left( \frac{m+1}{2}\cdot s-(m-1) + g(s,x)
    \right),
\]
where $g(s,x)$ is a polynomial of degree $m$ in $s$ whose coefficients are in $\bigoh{x^{-1}}$. 
\end{lemma}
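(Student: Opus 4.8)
The plan is to substitute $t = s\cdot x^{-r}$ directly into the expansion $p(t) = \sum_{i=0}^{m} a_i t^i$ and read off the leading behavior in $x$, using the explicit asymptotic formulas for the coefficients $a_i$ supplied by Lemma \ref{lemma:matrix_char_poly}. After the substitution the $i$-th term becomes $a_i t^i = a_i s^i x^{-ri}$, so everything reduces to tracking the power of $x$ carried by each term and collecting those of maximal order.

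Since $m$ is odd, the parity of $m-i$ is the parity of $i+1$; hence Lemma \ref{lemma:matrix_char_poly} gives, for $i$ odd with $i<m-1$, the coefficient $a_i = (-1)^{(m-i)/2}\binom{(m+i)/2}{i}x^{-r(m-i)/2}(1+O(x^{-1}))$, so that $a_i t^i$ has $x$-exponent $-r(m+i)/2$; and for $i$ even with $i\le m-3$, the coefficient $a_i = (-1)^{(m-i-3)/2}(m-i-1)\binom{(m+i-1)/2}{i}x^{-r(m-i+1)/2}(1+O(x^{-1}))$, so that $a_i t^i$ has $x$-exponent $-r(m+i+1)/2$; finally the monic term contributes $t^m = s^m x^{-rm}$. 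I would then observe that among all these exponents the largest (least negative) is $-r(m+1)/2$, and that it is attained \emph{only} by $i=1$, contributing $(-1)^{(m-1)/2}\tfrac{m+1}{2}\,s\,x^{-r(m+1)/2}(1+O(x^{-1}))$, and by $i=0$, contributing $(-1)^{(m-3)/2}(m-1)\,x^{-r(m+1)/2}(1+O(x^{-1})) = -(-1)^{(m-1)/2}(m-1)\,x^{-r(m+1)/2}(1+O(x^{-1}))$ via the identity $(-1)^{(m-3)/2} = -(-1)^{(m-1)/2}$. Every remaining term — each odd $i\ge 3$, each even $i\ge 2$, and $i=m$ — carries a power of $x$ smaller by a factor $x^{-rc}$ with $c = (i-1)/2$, $i/2$, or $(m-1)/2$ respectively, each $\ge 1$; since $r\ge 1$, this factor is $O(x^{-1})$.

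It then remains to factor out the common $(-1)^{(m-1)/2}x^{-r(m+1)/2}$, which yields $p(s x^{-r}) = (-1)^{(m-1)/2}x^{-r(m+1)/2}\big(\tfrac{m+1}{2}s - (m-1) + g(s,x)\big)$, where $g(s,x)$ absorbs both the $O(x^{-1})$ corrections to the $s$- and constant-terms and all the lower-order terms, each of which now has a coefficient in $O(x^{-1})$; and since the $i=m$ term contributes the monomial $(-1)^{(m-1)/2}x^{-r(m-1)/2}s^m$ with nonzero coefficient, $g$ has degree exactly $m$ in $s$. There is no serious obstacle here: the only delicate part is the bookkeeping, namely verifying that $i=0,1$ are the unique indices realizing the dominant exponent and checking the sign identity for $(-1)^{(m-3)/2}$, so the proof will be short and essentially computational.
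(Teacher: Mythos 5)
Your proof is correct and follows essentially the same route as the paper's: substitute $t=s x^{-r}$, apply the coefficient asymptotics from Lemma \ref{lemma:matrix_char_poly}, observe that the dominant $x$-exponent $-r(m+1)/2$ is achieved only by $i=0$ and $i=1$, and factor out $(-1)^{(m-1)/2}x^{-r(m+1)/2}$. The only point you leave implicit is the index $i=m-1$ (even when $m$ is odd, hence outside your ``$i$ even with $i\le m-3$'' range), but since $a_{m-1}=0$ that term vanishes and the argument is unaffected.
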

\begin{proof}
   Let $0\leq i\leq m-2$ and let $t=s\cdot x^{-r}$. By Lemma \ref{lemma:matrix_char_poly}, if $i$ is odd, then $m-i$ is even, so we have
\[
    a_i t^i = (-1)^{\frac{m-i}{2}} \binom{(m+i)/2}{i} x^{-\frac{r(m+i)}{2}} (1+\bigoh{x^{-1}}) s^i.
\]
For $i=1$, we get
\[
    a_1 t =  x^{-\frac{r(m+1)}{2}} \left( (-1)^{\frac{m-1}{2}}\cdot \frac{m+1}{2} +\bigoh{x^{-1}}\right)\cdot s.
\]
For $i\geq 3$, we obtain
\[
    a_i t^i = x^{-\frac{r(m+1)}{2}}\cdot  O(x^{-r}) \cdot s^i= x^{-\frac{r(m+1)}{2}}\cdot  O(x^{-1}) \cdot s^i.
\]
If $i$ is even, then $m-i$ is odd, and we obtain by Lemma \ref{lemma:matrix_char_poly},
\[
    a_i t^i = (-1)^{\frac{m-i-3}{2}} (m-i-1) \binom{(m+i-1)/2}{i} x^{-\frac{r(m+i+1)}{2}} (1+O(x^{-1})) s^i.
\]
For $i=0$, we get
\begin{align*}
    a_0 &= x^{-\frac{r(m+1)}{2}} \left( (-1)^{\frac{m-3}{2}} \cdot(m-1) + O(x^{-1})\right)
    \\
    &= -x^{-\frac{r(m+1)}{2}} \left( (-1)^{\frac{m-1}{2}} \cdot(m-1) + O(x^{-1})\right).
\end{align*}
For $i\geq 2$ we obtain
\[
a_i t^i = x^{-\frac{r(m+1)}{2}}\cdot  O(x^{-r}) \cdot s^i=x^{-\frac{r(m+1)}{2}}\cdot  O(x^{-1}) \cdot s^i.
\]
In addition, we have the term
\[
    t^m=  x^{-r m } s^m 
    =x^{-\frac{r(m+1)}{2}} \cdot x^{-\frac{r(m-1)}{2}}  s^m
    = x^{-\frac{r(m+1)}{2}} \cdot O(x^{-1}) s^m.
\]
Therefore, we can write 
\[
    p(s\cdot x^{-r}) = (-1)^{\frac{m-1}{2}} x^{-\frac{r(m+1)}{2}} \left( \frac{m+1}{2} s - (m-1) + g(s,x)\right),
\]
where $g(s,x)$ is a polynomial of degree $m$ in $s$ whose coefficients are in $O(x^{-1})$. 
\end{proof}

\begin{lemma}\label{lemma:approximation}
   Let $\beta>0$. Let 
    $H(z,x)$ be a polynomial in $z$ with coefficients depending on the real parameter $x$.
    Assume that we can write $H(z,x)=\tilde{H}(z)+E(z,x)$, where $\tilde{H}$ is a polynomial in $z$, and 
    \[
    E(z,x)=\sum_{j=0}^m b_{j}(x) z^j,
    \]
    where $b_j(x)= O(x^{-\beta})$ for all $0\leq j\leq m$.
    Let $\zeta\in\mathbb{C}$ be a simple root of $\tilde{H}$.
     Then, there exist $x_0, C >0$ such that, for all $x\geq x_0$, $H(z,x)$ contains a unique root in the disk
    \[
        \left\{ z\in\mathbb{C} :\, |z-\zeta|\leq C\cdot x^{-\beta}\right\}.
    \]
\end{lemma}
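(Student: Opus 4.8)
The plan is to deduce Lemma~\ref{lemma:approximation} from Rouch\'e's theorem applied on a small circle around $\zeta$. First I would fix notation: since $\zeta$ is a simple root of $\tilde H$, there is a radius $\rho_0>0$ such that $\tilde H$ has no other root in the closed disk $\overline{D(\zeta,\rho_0)}$, and in particular $|\tilde H|$ attains a positive minimum $\mu_\rho>0$ on the circle $\partial D(\zeta,\rho)$ for each $0<\rho\le\rho_0$. Moreover, because $\zeta$ is simple, $\tilde H'(\zeta)\neq 0$, and a first-order Taylor estimate gives $|\tilde H(z)|\ge c_1\rho$ on $\partial D(\zeta,\rho)$ for all sufficiently small $\rho$, with $c_1>0$ a constant depending only on $\tilde H$ and $\zeta$ (e.g. $c_1=\tfrac12|\tilde H'(\zeta)|$). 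The degree $m$ of $E(\cdot,x)$ as a polynomial in $z$ is fixed, so on any bounded disk around $\zeta$ we have a uniform bound $|E(z,x)|\le K\cdot\big(\max_{0\le j\le m}|b_j(x)|\big)\le K'\,x^{-\beta}$ valid for all $x$ past some threshold, where $K'$ absorbs $K$ and the constants hidden in the $O(x^{-\beta})$ bounds as well as $\max(1,|\zeta|+\rho_0)^m$.

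Next I would choose the radius to shrink with $x$: set $\rho = \rho(x) = C\cdot x^{-\beta}$ where $C>0$ is a constant, to be fixed, with $C$ small enough that $\rho(x)\le\rho_0$ once $x$ is large. On the circle $\partial D(\zeta,\rho(x))$ we then have $|\tilde H(z)|\ge c_1\rho(x) = c_1 C\,x^{-\beta}$, while $|H(z,x)-\tilde H(z)| = |E(z,x)|\le K'\,x^{-\beta}$. Choosing $C > K'/c_1$ makes $|E(z,x)| < |\tilde H(z)|$ on the entire circle, for all $x\ge x_0$ with $x_0$ large enough to guarantee $\rho(x)\le\rho_0$ and the validity of all the $O(\cdot)$ estimates. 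By Rouch\'e's theorem, $H(z,x) = \tilde H(z) + E(z,x)$ and $\tilde H(z)$ have the same number of zeros (counted with multiplicity) inside $D(\zeta,\rho(x))$; since $\zeta$ is a simple root of $\tilde H$ and is the only root of $\tilde H$ in that disk, $H(\cdot,x)$ has exactly one root there, which is what the lemma asserts with this choice of $C$ and $x_0$.

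One small point worth addressing carefully in the write-up: the constant hidden in the hypothesis $b_j(x)=O(x^{-\beta})$ comes with its own threshold $x_j$, so I would take $x_0$ to be the maximum of all these thresholds (finitely many, since $0\le j\le m$) together with the threshold ensuring $C x^{-\beta}\le\rho_0$; this is harmless. The only genuinely substantive step is the lower bound $|\tilde H(z)|\ge c_1\rho$ on the shrinking circle, which requires that $\zeta$ be a \emph{simple} root — if it had higher multiplicity $\mu$ we would only get $|\tilde H(z)|\gtrsim\rho^\mu$, which would not dominate $x^{-\beta}$ for the linear choice $\rho\sim x^{-\beta}$, and indeed the conclusion (a \emph{unique} root in a disk of radius $O(x^{-\beta})$) would fail. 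So the main (and really the only) obstacle is simply to make this Taylor/Rouch\'e comparison quantitative and to bookkeep the constants; there is no deeper difficulty.
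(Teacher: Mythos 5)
Your argument is essentially the paper's own: both apply Rouch\'e's theorem on the shrinking circle $|z-\zeta| = C x^{-\beta}$, both obtain the lower bound $|\tilde H(z)|\gtrsim |\tilde H'(\zeta)|\cdot x^{-\beta}$ from simplicity of $\zeta$ via a first-order Taylor estimate, both bound $|E(z,x)|\lesssim x^{-\beta}$ uniformly on the circle, and both choose $C$ large enough to make the comparison go through. The write-up is correct and complete, including the bookkeeping of the finitely many thresholds and the observation that simplicity is what forces the linear (rather than $\rho^\mu$) lower bound.
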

\begin{proof}

Since $b_j(x)=O(x^{-\beta})$ for all $0\leq j\leq m$, there exist $\tilde{x}_0>0$ and $K>0$ such that, for all $x\geq \tilde{x}_0$ and all $0\leq j\leq m$, 
    \[
        |b_j(x)|\leq K \cdot x^{-\beta}.
    \]
    Let $\zeta$ be a simple root of $\tilde{H}(z)$. Since $\zeta$ is simple, we have $\tilde{H}'(\zeta)\neq 0$, 
    and by Taylor's theorem, 
    \[
    \tilde{H}(z)= \tilde{H}'(\zeta)(z-\zeta)+f(z)(z-\zeta),
    \]
    where $f(z)$ is some function satisfying $\lim_{z\to\zeta} f(z)=0$.
    Let 
    \[
        C> \frac{2 K}{|\tilde{H}'(\zeta)|}\cdot\sum_{j=0}^m(|\zeta|+1)^j, 
    \]
    and assume $|z-\zeta|= C x^{-\beta}$. Then, we have
    \[
        |\tilde{H}(z)|\geq C x^{-\beta} \cdot |\tilde{H}'(\zeta)|- C x^{-\beta}\cdot |f(z)|.
    \]
    Since as $x\to \infty$ we have $z\to \zeta$,  there is some $\tilde{x}_1>0$ such that, for $x\geq \tilde{x}_1$, $|f(z)|\leq |\tilde{H}'(\zeta)|/2$, and therefore
    \[
        |\tilde{H}(z)|\geq C x^{-\beta} \cdot \frac{|\tilde{H}'(\zeta)|}{2}.
    \]
    On the other hand, for $x\geq \tilde{x}_0$, 
    \[
        |E(z,x)|\leq \sum_{j=0}^m |b_j(x)|\cdot|z|^j \leq \sum_{j=0}^m K x^{-\beta} |z|^j.
    \]
    For $x\geq C^{1/\beta}$ we have
    \[
        |z|\leq |z-\zeta|+|\zeta| = C x^{-\beta} +|\zeta|\leq |\zeta|+1.
    \]
    So, for $x\geq \max\{\tilde{x}_0,C^{1/\beta}\}$, 
    \[
        |E(z,x)|\leq K x^{-\beta} \sum_{j=0}^m (|\zeta|+1)^j.
    \]
   Thus, for $x\geq \max\{\tilde{x}_0,\tilde{x}_1, C^{1/\beta}\}$, we have
    $
        |\tilde{H}(z)|> |E(z,x)|
    $
    for all $z\in\mathbb{C}$ such that $|z-\zeta|=C x^{-\beta}$.
    Hence, by Rouch\'e's theorem (see e.g. \cite{sarason2007complex}), $H(z,x)$ and $\tilde{H}(z)$ have the same number of roots in the disk 
  \[
       D(x)= \left\{ z\in\mathbb{C} :\, |z-\zeta|\leq C\cdot x^{-\beta}\right\}.
    \]
    Since $\zeta$ is a simple root of $\tilde{H}(z)$, then there is some $\tilde{x}_2>0$ such that, for $x\geq \tilde{x}_2$, $\zeta$ is the unique root of $\tilde{H}(z)$ in $D(x)$. Let $x_0=\max\{\tilde{x}_0,\tilde{x}_1,\tilde{x}_2,C^{1/\beta}\}$.  Then, for $x\geq x_0$, there is a unique root of $H(z,x)$ in the disk $D(x)$.
\end{proof}

\begin{proposition}\label{prop:M_matrix_eigen}
Let $m\geq 2$ and $r\geq 1$.
Let $M\in\mathcal{M}_{m,r}(x)$, and let $\alpha=\min\{r/2,1\}$.
Then, there exist $x_0>0$ such that the following holds: 
for every $\zeta\in \mathcal{I}_m$ there is some $C_{\zeta}>0$ such that, for all $x\geq x_0$, there is exactly one eigenvalue of $M$ in the disk
\[
    \left\{ z\in \mathbb{C}:\, |z-\zeta\cdot x^{-\frac{r}{2}}|\leq C_{\zeta}\cdot x^{-(r/2+\alpha)}\right\}.
\]
Moreover, if $m$ is odd, there is some $C>0$ such that, for all $x\geq x_0$, there is exactly one eigenvalue of $M$ in the disk
\[
    \left\{z\in \mathbb{C}:\,
    \left| z-\frac{2(m-1)}{m+1}\cdot x^{-r}\right|\leq C \cdot x^{-(r+1)}
    \right\}.
\]
\end{proposition}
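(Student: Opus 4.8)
The plan is to translate the question about eigenvalues of $M$ into a question about roots of two explicit polynomials, using the changes of variables already set up in Lemma \ref{lemma:change_of_variables_1} and Lemma \ref{lemma:change_of_variables_2}, and then to locate those roots with the Rouché-type perturbation estimate of Lemma \ref{lemma:approximation}.

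First I would note that the eigenvalues of $M$ are precisely the roots of its characteristic polynomial $p(t)$, which is monic of degree $m$. Applying the substitution $t=s\cdot x^{-r/2}$ of Lemma \ref{lemma:change_of_variables_1} gives
\[
    p(s\cdot x^{-r/2}) = x^{-rm/2}\bigl(F_m(s)+h(s,x)\bigr),
\]
where $F_m$ is monic of degree $m$ and $h(s,x)$ has degree at most $m-2$ in $s$ with all coefficients in $O(x^{-\alpha})$, $\alpha=\min\{r/2,1\}$. Since $x^{-rm/2}\neq 0$, the polynomial $F_m(s)+h(s,x)$ is monic of degree $m$, so $s\mapsto s\cdot x^{-r/2}$ is a bijection between its roots and the roots of $p(t)$; moreover it maps a disk $\{|s-\zeta|\le\rho\}$ onto $\{|z-\zeta x^{-r/2}|\le \rho\,x^{-r/2}\}$, preserving the number of enclosed roots (with multiplicity).

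Next I would verify that each $\zeta\in\mathcal{I}_m$ is a \emph{simple} root of $F_m$. By Corollary \ref{cor:signalternatingfibo} the root set of $F_m$ is $\mathcal{I}_m$ if $m$ is even and $\mathcal{I}_m\cup\{0\}$ if $m$ is odd; using Theorem \ref{thm:signalternatingfibo} one checks that the numbers $\pm 2\cos(j\pi/(m+1))$ for $1\le j\le\lfloor m/2\rfloor$, together with $0$ in the odd case, are $m$ pairwise distinct reals, and since $\deg F_m=m$ every root is simple. Then for each $\zeta\in\mathcal{I}_m$ I apply Lemma \ref{lemma:approximation} with $H(z,x)=F_m(z)+h(z,x)$, $\tilde H=F_m$, $E=h$ and $\beta=\alpha$, obtaining $x_0^{(\zeta)},C_\zeta>0$ such that for $x\ge x_0^{(\zeta)}$ there is a unique root of $F_m+h$ in $\{|s-\zeta|\le C_\zeta x^{-\alpha}\}$. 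Transporting this through the change of variable yields exactly one eigenvalue of $M$ in $\{z:|z-\zeta x^{-r/2}|\le C_\zeta x^{-(r/2+\alpha)}\}$, which is the first assertion.

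For the ``moreover'' statement I would argue the same way with the finer substitution $t=s\cdot x^{-r}$ of Lemma \ref{lemma:change_of_variables_2}, which gives $p(s\cdot x^{-r})=(-1)^{(m-1)/2}x^{-r(m+1)/2}\bigl(\tfrac{m+1}{2}s-(m-1)+g(s,x)\bigr)$ with $g$ of degree $m$ and coefficients in $O(x^{-1})$; the linear polynomial $\tfrac{m+1}{2}s-(m-1)$ has the simple root $\tfrac{2(m-1)}{m+1}$, so Lemma \ref{lemma:approximation} with $\beta=1$ produces, for large $x$, a unique root of $\tfrac{m+1}{2}s-(m-1)+g(s,x)$ within $Cx^{-1}$ of $\tfrac{2(m-1)}{m+1}$, hence a unique eigenvalue of $M$ within $Cx^{-(r+1)}$ of $\tfrac{2(m-1)}{m+1}x^{-r}$. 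Taking $x_0$ to be the maximum of the finitely many thresholds produced above finishes the proof. I expect the only delicate points to be the verification that all roots of $F_m$ (and of the relevant linear factor) are simple, so that Lemma \ref{lemma:approximation} is applicable, and the careful bookkeeping of how the scaling $x^{-r/2}$ (resp.\ $x^{-r}$) converts the $O(x^{-\alpha})$ (resp.\ $O(x^{-1})$) error radius in the $s$-variable into the claimed radius in the eigenvalue; everything else is assembling results already in hand.
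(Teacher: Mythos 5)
Your proposal is correct and follows essentially the same route as the paper: apply the change of variables of Lemma~\ref{lemma:change_of_variables_1} (and Lemma~\ref{lemma:change_of_variables_2} for odd $m$), invoke Corollary~\ref{cor:signalternatingfibo} to identify the limiting polynomial's roots, and then use Lemma~\ref{lemma:approximation} to pin down a unique nearby root of the perturbed polynomial, finally transporting the disk back through the scaling. Your explicit count that the $m$ numbers in $\mathcal{I}_m$ (plus $0$ when $m$ is odd) are pairwise distinct, hence all simple roots of the degree-$m$ polynomial $F_m$, is a small but worthwhile elaboration of a step the paper asserts without comment; otherwise the two arguments coincide.
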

\begin{proof}
Let $p(t)$ be the characteristic polynomial of $M$. Let $\alpha=\min\{r/2,1\}$.
By Lemma \ref{lemma:change_of_variables_1}, we have
\begin{equation}\label{eq:p(sx)}
    p(s\cdot x^{-\frac{r}{2}}) = x^{-\frac{r m}{2}}\left( F_m(s) + h(s,x)
    \right),
\end{equation}
where
\[
    F_m(s)= \sum_{j=0}^{\lfrac{m}{2}} (-1)^j \binom{m-j}{j} s^{m-2j}
\]
and $h(s,x)$ is a polynomial of degree at most $m-2$ in $s$ whose coefficients are in $O(x^{-\alpha})$.
Let $\zeta\in \mathcal{I}_m$. By Corollary \ref{cor:signalternatingfibo}, $\zeta$ is a simple root of $F_m(s)$. Hence, by Lemma \ref{lemma:approximation}, there exist $x_{\zeta}>0$ and $C_{\zeta}>0$ such that, for $x\geq x_{\zeta}$, $F_m(s)+h(s,x)$ contains a unique root in the disk
\[
    \{ s\in \mathbb{C} :\, |s-\zeta|\leq C_{\zeta}\cdot x^{-\alpha}\}.
\]
Therefore, by \eqref{eq:p(sx)}, for $x\geq x_{\zeta}$,  $p(t)$ has exactly one root in the disk
\[
    D=\{ t\in \mathbb{C} :\, |t-\zeta\cdot x^{-\frac{r}{2}}|\leq C_{\zeta}\cdot x^{-\frac{r}{2}-\alpha}\}.
\]
That is, $M$ has exactly one eigenvalue in $D$. 

Now, assume that $m$ is odd. By Lemma \ref{lemma:change_of_variables_2}, we can write
\begin{equation}\label{eq:p(sx)_2}
  p(s\cdot x^{-r}) = (-1)^{\frac{m-1}{2}}\cdot  x^{-\frac{r(m +1)}{2}}\left( \frac{m+1}{2}\cdot s-(m-1) + g(s,x)
    \right),
\end{equation}
where $g(s,x)$ is a polynomial of degree $m$ in $s$ whose coefficients are in $O(x^{-1})$. Since $\frac{2(m-1)}{m+1}$ is a simple root of the polynomial $\frac{m+1}{2} s-(m-1)$, there exist by Lemma \ref{lemma:approximation} constants $\tilde{x}_0, C>0$ such that, for $x\geq \tilde{x}_0$, $
    \frac{m+1}{2} s-(m-1) + g(s,x)
$ 
contains a unique root in the disk
\[
    \left\{ s\in \mathbb{C} :\, \left|s-\frac{2(m-1)}{m+1}\right|\leq C\cdot x^{-1}\right\}.
\]
Thus, by \eqref{eq:p(sx)_2}, $p(t)$ contains exactly one root in the disk
\[
    D_2=\left\{ t\in \mathbb{C} :\, \left|t-\frac{2(m-1)}{m+1}\cdot x^{-r}\right|\leq C\cdot x^{-r-1}\right\}.
\]
That is, $M$ has exactly one eigenvalue in $D_2$.

Finally, the claim follows by letting $x_0$ be the maximum of all $x_{\zeta}$ (and $\tilde{x}_0$ in the odd $m$ case).

\end{proof}

We will need also the following simple lemma giving a sufficient condition for certain families of disks to be pairwise disjoint:

\begin{lemma}\label{lemma:disjoint_disks}
    Let $D_1(x),\ldots,D_m(x)\subset \mathbb{C}$ be a family of disks, depending on a real parameter $x$, defined by
    \[
    D_i(x)= \left\{ z\in\mathbb{C}:\, |z-(a+\zeta_i x^{-\alpha_i})|\leq C_i x^{-\beta_i}\right\},
    \]
    where $a,\zeta_i\in \mathbb{C}$, $C_i>0$ and $0<\alpha_i<\beta_i$ for all $1\leq i\leq m$.
    Assume that $\zeta_i x^{-\alpha_i}\neq \zeta_j x^{-\alpha_j}$ for all $1\leq i<j\leq m$.
    Then, there exists $x_0>0$ such that, for all $x\geq x_0$, $D_i(x)\cap D_j(x)=\emptyset$ for all $1\leq i<j\leq m$.
\end{lemma}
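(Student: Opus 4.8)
The plan is to reduce the claim to a pairwise comparison of disks. Two closed disks in $\mathbb{C}$ are disjoint exactly when the distance between their centers strictly exceeds the sum of their radii, so for each pair $1\le i<j\le m$ it is enough to produce a threshold $x_{ij}>0$ such that
\[
    \left|\zeta_i x^{-\alpha_i}-\zeta_j x^{-\alpha_j}\right| \;>\; C_i x^{-\beta_i}+C_j x^{-\beta_j}
\]
for all $x\ge x_{ij}$; since there are only finitely many pairs, $x_0=\max_{i<j}x_{ij}$ then works.

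Fix such a pair and, after relabelling if necessary, assume $\alpha_i\le\alpha_j$. The first step is to bound the left-hand side (the distance between the two centers) from below by a constant multiple of $x^{-\alpha_i}$. If $\alpha_i=\alpha_j$, then the hypothesis $\zeta_i x^{-\alpha_i}\ne\zeta_j x^{-\alpha_j}$ forces $\zeta_i\ne\zeta_j$, and the left-hand side is exactly $|\zeta_i-\zeta_j|\,x^{-\alpha_i}$ with $|\zeta_i-\zeta_j|>0$. If $\alpha_i<\alpha_j$ (in all applications of this lemma every $\zeta_i$ is nonzero, so we assume $\zeta_i\ne 0$), then the triangle inequality gives
\[
    \left|\zeta_i x^{-\alpha_i}-\zeta_j x^{-\alpha_j}\right|\;\ge\;|\zeta_i|\,x^{-\alpha_i}-|\zeta_j|\,x^{-\alpha_j}\;=\;x^{-\alpha_i}\bigl(|\zeta_i|-|\zeta_j|\,x^{-(\alpha_j-\alpha_i)}\bigr),
\]
which is at least $\tfrac12|\zeta_i|\,x^{-\alpha_i}$ once $x$ is large enough that $|\zeta_j|\,x^{-(\alpha_j-\alpha_i)}\le\tfrac12|\zeta_i|$. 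In both cases we obtain a constant $c_{ij}>0$ with $\bigl|\zeta_i x^{-\alpha_i}-\zeta_j x^{-\alpha_j}\bigr|\ge c_{ij}\,x^{-\alpha_i}$ for all sufficiently large $x$.

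For the second step, note that $\alpha_i<\beta_i$ by hypothesis and $\alpha_i\le\alpha_j<\beta_j$, so both $x^{-\beta_i}=o(x^{-\alpha_i})$ and $x^{-\beta_j}=o(x^{-\alpha_i})$ as $x\to\infty$; hence $C_i x^{-\beta_i}+C_j x^{-\beta_j}=o(x^{-\alpha_i})$. Combining this with the lower bound $c_{ij}\,x^{-\alpha_i}$ on the distance between the centers, there is some $x_{ij}>0$ beyond which the displayed pairwise inequality holds, and taking $x_0=\max_{1\le i<j\le m}x_{ij}$ finishes the argument.

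The only point requiring care is the comparison of the two center-offsets $\zeta_i x^{-\alpha_i}$ and $\zeta_j x^{-\alpha_j}$ when their exponents differ: one must check that the faster-decaying offset, together with both radii, is negligible compared with the slower-decaying offset. This is exactly what the strict inequalities $0<\alpha_i<\beta_i$ (together with $\alpha_i\le\alpha_j$) provide, since they force every relevant exponent other than $\alpha_i$ to be strictly larger than $\alpha_i$. Once this is in hand, the rest is a routine application of the triangle inequality and the finiteness of the set of pairs.
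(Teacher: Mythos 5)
Your proof is correct and follows the same strategy as the paper: reduce to pairwise comparisons, bound the distance between centres from below by a positive constant times $x^{-\min\{\alpha_i,\alpha_j\}}$, bound the sum of radii from above by a constant times $x^{-\min\{\beta_i,\beta_j\}}$, and conclude from $\min\{\alpha_i,\alpha_j\}<\min\{\beta_i,\beta_j\}$. Where your write-up is tighter is in the centre-distance lower bound. The paper's proof asserts the chain
\[
|z_i-z_j|\ \ge\ \bigl|\,|\zeta_i|x^{-\alpha_i}-|\zeta_j|x^{-\alpha_j}\,\bigr|\ >\ c\,x^{-\alpha},
\]
but the middle quantity is identically zero when $\alpha_i=\alpha_j$ and $|\zeta_i|=|\zeta_j|$ with $\zeta_i\ne\zeta_j$ --- a situation that actually arises in the application, since $\mathcal{J}_k$ is symmetric under sign change. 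Your case split, which in the equal-exponent case computes $|z_i-z_j|=|\zeta_i-\zeta_j|\,x^{-\alpha_i}$ directly rather than passing through the lossy reverse triangle inequality, avoids this. You are also right to flag that the lemma implicitly needs $\zeta_i\ne 0$ in the unequal-exponent case (with $\zeta_i=0$, $\alpha_i<\alpha_j$, and $\alpha_i<\beta_i<\alpha_j$ the conclusion fails outright); that hypothesis is used silently by the paper's chain of inequalities as well, and it holds in all the paper's applications of the lemma. So the route is the same as the paper's, but your version closes two small gaps in the written argument.
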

\begin{proof}
    Let $z_i=a+\zeta_i x^{-\alpha_i}$ and $R_i=C_i x^{-\beta_i}$ for all $1\leq i\leq m$. 
    Let $1\leq i<j\leq m$. Let $\alpha=\min\{\alpha_i,\alpha_j\}$ and $\beta=\min\{\beta_i,\beta_j\}$. Note that $\alpha<\beta$. Since $z_i\neq z_j$, there exists $c>0$ such that, for large enough $x$, 
    \[
        |z_i-z_j|\geq \left| |\zeta_i|x^{-\alpha_i}-|\zeta_j|x^{-\alpha_j}\right| > c x^{-\alpha}.
    \]
    On the other hand, 
    \[
    R_i+R_j\leq (C_i+C_j) x^{-\beta}.
    \]
    Since $\alpha<\beta$, we obtain for large enough $x$,
    \[
        |z_i-z_j|> R_i+R_j,
    \]
    and therefore $D_i(x)\cap D_j(x)=\emptyset$, as wanted.
\end{proof}

Finally, we can prove our main result, Theorem \ref{thm:main_result}.

\begin{proof}[Proof of Theorem \ref{thm:main_result}]
    By Theorem \ref{thm:diagonal_representation}, the spectrum of $\lap{0}{n}{q}$ is the union of the spectra of the matrices $\tilde{L}_k$, each repeated $\binom{n}{k}_q-\binom{n}{k-1}_q$ times, for $0\leq k\leq \lfrac{n}{2}$. As we already mentioned in the proof of Corollary \ref{cor:pap_part1}, the eigenvalues of $\tilde{L}_0$ are $0$ with multiplicity $1$, and $n-1$ with multiplicity $n-2$. Since $\binom{n}{0}_q-\binom{n}{-1}_q=1$, we obtain that $0$ is an eigenvalue of $\lap{0}{n}{q}$ with multiplicity $1$, and $n-1$ is an eigenvalue with multiplicity $n-2$.
    
    In addition, if $n$ is even, then by \eqref{eq:lap_block_formula}, $\tilde{L}_{n/2}$ is a $1\times 1$ matrix with a unique element $n-2$. Therefore, $n-2$ is an eigenvalue of $\lap{0}{n}{q}$ with multiplicity $\binom{n}{n/2}_q-\binom{n}{n/2-1}_q$.

    For $1\leq k\leq \lfrac{n-1}{2}$, note that, by Lemma \ref{lemma:limit_of_qbinoms} and Lemma \ref{lemma:coeffs_limit}, we have $\tilde{L}_k-(n-2)I\in \mathcal{M}_{n-2k+1,k}(q)$. Let $\alpha=\min\{k/2,1\}$.
    Then, by Proposition \ref{prop:M_matrix_eigen} (and noting that the eigenvalues of $\lap{0}{n}{q}$ are all real numbers), there exist $\tilde{q}_k>0$ such that for every $\zeta\in \mathcal{I}_{n-2k+1}=\mathcal{J}_k$ there is some $C_{\zeta}>0$ such that, for $q\geq \tilde{q}_k$, $\tilde{L}_k$ contains exactly one eigenvalue in the interval
    \[
     \{\lambda\in\Rea:\, |\lambda-(n-2+\zeta\cdot q^{-k/2})|\leq C_{\zeta}\cdot q^{-(k/2+\alpha)}\},
    \]
    and if $n$ is even, there is some $C_k>0$ such that $\tilde{L}_k$ contains exactly one eigenvalue in the interval
\[  
    \left\{\lambda\in \mathbb{R}:\, \left|\lambda-\left(n-2+\frac{2(n-2k)}{n-2k+2}\cdot q^{-k}\right)\right|\leq C_k\cdot q^{-(k+1)}\right\}.
\]
Let $\tilde{q}_0(n)=\max\{\tilde{q}_k:\, 1\leq k\leq \lfloor (n-1)/2 \rfloor\}$ and let $C(n)$ be the maximum of all $C_{\zeta}$ and $C_k$. Then, for $q\geq \tilde{q}_0(n)$, $\lap{0}{n}{q}$ contains at least one eigenvalue, with multiplicity $\binom{n}{k}_q-\binom{n}{k-1}_q$, in the interval
  \[
     D_{n,k,\zeta}(q)=\{\lambda\in\Rea:\, |\lambda-(n-2+\zeta\cdot q^{-k/2})|\leq C(n) \cdot q^{-(k/2+\alpha)}\},
    \]
for all $1\leq k\leq \lfrac{n-1}{2}$ and $\zeta\in\mathcal{J}_k$. In addition, if $n$ is even, $\lap{0}{n}{q}$ contains at least one eigenvalue, of multiplicity $\binom{n}{k}_q-\binom{n}{k-1}_q$, in the interval
\[  
    \tilde{D}_{n,k}(q)=\left\{\lambda\in \mathbb{R}:\, \left|\lambda-\left(n-2+\frac{2(n-2k)}{n-2k+2}\cdot q^{-k}\right)\right|\leq C(n)\cdot q^{-(k+1)}\right\},
\]
for all $1\leq k\leq \lfrac{n-1}{2}$. 

If $n$ is odd, let $\mathcal{D}_{k}=\{D_{n,k,\zeta}(q) :\, \zeta\in\mathcal{J}_k\}$. If $n$ is even, let $\mathcal{D}_k=\{\tilde{D}_{n,k}(q)\}\cup \{D_{n,k,\zeta}(q) :\, \zeta\in\mathcal{J}_k\}$. Let $\mathcal{D}=\bigcup_{k=1}^{\lfloor (n-1)/2\rfloor}\mathcal{D}_k$.
Note that the midpoints of the intervals in $\mathcal{D}$ are all distinct. Indeed, this is clear except in the case when $n$ is even, where we need to show that for all $1\leq k\leq (n/2-1)/2$ and $\zeta\in\mathcal{J}_k$, $\zeta\neq 2(n-2k)/(n-2k+2)$. But  $\zeta$ is (up to sign) twice the cosine of a rational multiple of $\pi$ lying strictly between $0$ and $\pi/2$;  hence, the only rational values that $\zeta$ can obtain are $\pm 1$ (see \cite[Cor. 3.12]{niven2005irrational}), and it is easy to check that the inequalities $1\neq 2(n-2k)/(n-2k+2)$ and $-1\neq 2(n-2k)/(n-2k+2)$ always hold for $1\leq k\leq (n/2-1)/2$. Therefore, by Lemma \ref{lemma:disjoint_disks}, there is some $q_0(n)>\tilde{q}_0(n)$ such that, for $q\geq q_0(n)$, the intervals in $\mathcal{D}$ are pairwise disjoint.

Finally, let $\mathcal{D}'=\mathcal{D}\cup\{\{0\},\{n-1\}\}$ if $n$ is odd, and $\mathcal{D}'=\mathcal{D}\cup\{\{0\},\{n-1\},\{n-2\}\}$ if $n$ is even. Note that $|\mathcal{J}_k|=n-2k+1$ if $n$ is odd, and $|\mathcal{J}_k|=n-2k$ if $n$ is even. So, the total number of eigenvalues of $\lap{0}{n}{q}$ (counting multiplicities) in the union of the intervals in $\mathcal{D}'$ is at least
\begin{align*}
1+(n-2) &+\sum_{k=1}^{(n-1)/2} |\mathcal{J}_k|\left(\binom{n}{k}_q-\binom{n}{k-1}_q\right)
\\
&=2\sum_{k=1}^{(n-1)/2} \binom{n}{k}_q = \sum_{k=1}^{n-1}\binom{n}{k}_q=|\flagcomplex{n}{q}(0)|
\end{align*}
for odd $n$, and at least
\begin{multline*}
1+(n-2)+ \left(\binom{n}{n/2}_q-\binom{n}{n/2-1}_q\right)+\sum_{k=1}^{n/2-1}(|\mathcal{J}_k|+1)\left(\binom{n}{k}_q-\binom{n}{k-1}_q\right)
\\
=\binom{n}{n/2}_q+2\sum_{k=1}^{n/2-1}\binom{n}{k}_q=
\sum_{k=1}^{n-1}\binom{n}{k}_q=|\flagcomplex{n}{q}(0)|
\end{multline*}
for even $n$.
Since $|\flagcomplex{n}{q}(0)|$ is the total number of eigenvalues of $\lap{0}{n}{q}$, we must in fact have equality, and therefore, for every $1\leq k\leq \lfrac{n-1}{2}$, every interval in $\mathcal{D}_k$ must contain exactly one eigenvalue of $\lap{0}{n}{q}$, with multiplicity $\binom{n}{k}_q-\binom{n}{k-1}_q$.

\end{proof}

\begin{proof}[Proof of Corollary \ref{cor:pap0case}]

The second part of the statement is proved in Corollary \ref{cor:pap_part1} (and also follows immediately from Theorem \ref{thm:main_result}).
    We are left to prove the first part. The argument is similar to the one we presented for 
    Corollary \ref{cor:distinct_eigens}.
    We divide into two cases. If $n$ is odd, then by Theorem \ref{thm:main_result} there is some $q_0(n)$ such that, for $q\geq q_0(n)$, the number of distinct eigenvalues of $\lap{0}{n}{q}$ is exactly 
\begin{align*}
2+\sum_{k=1}^{(n-1)/2} |\mathcal{J}_k| &=2+\sum_{k=1}^{(n-1)/2}2\lfrac{n-2k+1}{2}
\\&=2+\sum_{k=1}^{(n-1)/2}(n-2k+1)
=\lfrac{n^2}{4}+2.
\end{align*}
Similarly, if $n$ is even, there is some $q_0(n)$ such that, for $q\geq q_0(n)$, the number of distinct eigenvalues of $\lap{0}{n}{q}$ is exactly 
\begin{align*}
3+ \sum_{k=1}^{n/2-1}\left(|\mathcal{J}_k|+1\right)
&=3+ \sum_{k=1}^{n/2-1}\left(2\lfrac{n-2k+1}{2}+1\right) 
\\ &= 
3+ \sum_{k=1}^{n/2-1}(n-2k+1) =\lfrac{n^2}{4}+2.
\end{align*}
\end{proof}

\section{Concluding remarks}

It would be interesting to extend the first part of Corollary \ref{cor:pap0case} to all values of $q$ (for $q$ a prime power), or at least to find an explicit bound for the minimal value of $q$ for which the claim holds. Similarly, it would be interesting to find explicit bounds for the constant $q_0(n)$ in Theorem \ref{thm:main_result}. Moreover, based on some numerical evidence, we believe that the constant $C(n)$ appearing in Theorem \ref{thm:main_result} may be replaced by some absolute constant $C$ (perhaps $C=2$).

The complex $\flagcomplex{n}{q}$ is an example of a building (see e.g \cite{abramenko2008buildings}). Garland's Theorem \ref{thm:garland} holds in fact not only for the complexes $\flagcomplex{n}{q}$ but for a much larger family of buildings.
It seems plausible that Papikian's conjecture may extend to these more general cases (see \cite[Conjecture 5.7]{papikian2008eigenvalues} for such a statement in the $0$-dimensional case).
It may be interesting to try to extend our results to other  families of buildings.

The higher dimensional cases of Papikian's conjecture remain open. In ongoing work, by extending some of the ideas presented here to the high dimensional setting, and relying on the theory of unipotent representations of the general linear group $GL(n,q)$ (see e.g. \cite{james1984representations}), we developed an algorithm for computing the eigenvalues of $\lap{k}{n}{q}$ in the limit $q\to\infty$. As a consequence, we were able to verify the second part of Conjecture \ref{conj:papikian} for several small values of $k$ and $n$.

\section*{Acknowledgements}
Part of this research was done during the author's Ph.D. studies at the Technion, Israel Institute of Technology, under the supervision of Prof. Roy Meshulam, and part of it was done while the author was a postdoctoral researcher at the Einstein Institute of Mathematics at the Hebrew University of Jerusalem.

I thank Roy Meshulam and Eran Nevo for many helpful discussions.

\bibliographystyle{abbrv}
\bibliography{biblio}

\end{document}